\newtheorem{lemm}{Lemma}%[section]
\newenvironment{lemma}{\begin{lemm}\stepcounter{rotcount}}{\end{lemm}}
\newtheorem{theo}[lemm]{Theorem}
\newenvironment{theorem}{\begin{theo}\stepcounter{rotcount}}{\end{theo}}
\newtheorem{coro}[lemm]{Corollary}
\newenvironment{corollary}{\begin{coro}\stepcounter{rotcount}}{\end{coro}}
\newtheorem{prop}[lemm]{Proposition}
\newtheorem{conj}[lemm]{Conjecture}
\newcounter{rotcount}
\newtheorem{rot}{}[rotcount]
\newenvironment{proofof}{\noindent}{\hfill$\Box$\medskip}
\newenvironment{rotproof}{\noindent}{\hfill$\lozenge$}
\newcommand{\ncont}{\nsubseteq}\newcommand{\cont}{\subseteq}
\renewcommand{\u}{\cup}\renewcommand{\i}{\cap}
\newcommand{\s}{^*}\newcommand{\del}{\backslash}
\newcommand{\emp}{\emptyset}
\newcommand{\si}{{\rm si}}\newcommand{\co}{{\rm co}}
\newcommand{\lc}{\left\lceil}\newcommand{\rc}{\right\rceil}
\newcommand{\defin}{\textbf}
\newcommand{\C}{\mathcal{C}}
\newcommand{\F}{\mathcal{F}}
\newcommand{\X}{\mathcal{X}}
\newcommand{\A}{\mathcal{A}}
\newcommand{\B}{\mathcal{B}}
\title{Contractible edges in $3$-connected graphs that preserve a minor}
\author{Jo\~ao Paulo Costalonga}\thanks{The author was partially supported by CNPq, grant 478053/2013-4}
\address{{\upshape joaocostalonga@gmail.com}\\
  Universidade Federal do Esp\'irito Santo\\
  Av. Fernando Ferrari, 514; Campus de Goiabeiras\\
  29075-910 - Vit\'oria - ES - Brazil
  	 }
\begin{document}

\begin{abstract}
Let $G$ be a $3$-connected graph with a $3$-connected (or sufficiently small) simple minor $H$. We establish that $G$ has a forest $F$ with at least $\left\lceil(|G|-|H|+1)/2\right\rceil$ edges such that $G/e$ is $3$-connected with an $H$-minor for each $e\in E(F)$. Moreover, we may pick $F$ with $|G|-|H|$ edges provided $G$ is triangle-free. These results are sharp. Our result generalizes a previous one by Ando et. al., which establishes that a $3$-connected graph $G$ has at least $\left\lceil|G|/2\right\rceil$ contractible edges. As another consequence, each triangle-free $3$-connected graph has an spanning tree of contractible edges. Our results follow from a more general theorem on graph minors, a splitter theorem, which is also established here. 
\end{abstract}
\maketitle

\noindent Key words: Graph, Contractible edges, $3$-Connectedness, Splitter Theorem

\section{Introduction}
\label{sec-intro}
The graphs we consider are allowed to have loops and parallel edges.  A graph $G$ is said to be \defin{$k$-connected} if the remotion of each set of vertices of $G$ with less than $k$ vertices leaves a connected graph (we do not consider the usual requirement that $|G|>k$). An edge $e$ of a $3$-connected graph $G$ is said to be \defin{contractible} if $G/e$ is $3$-connected. We refer the reader to \cite{Kriesell} for more about contractible edges. The following result will be generalized here. 

\begin{theorem}\label{ando}(Ando, Enomoto and Saito~\cite{Ando}) Every $3$-connected graph $G$ has at least $\lc|G|/2\rc$ contractible edges.
\end{theorem}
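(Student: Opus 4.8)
The plan is to work with the standard characterization of non-contractible edges: for a $3$-connected graph $G$ with $|G|\ge 5$, an edge $e=uv$ fails to be contractible precisely when $G$ has a $3$-vertex-cut containing both $u$ and $v$, i.e.\ some vertex $w$ with $\{u,v,w\}$ separating $G$. The small cases ($|G|\le 5$, in particular $K_4$ and the wheels $W_4,W_5$) I would verify by hand; note that the hub of a wheel is already a vertex meeting no contractible edge (its spokes are all non-contractible while the rim edges are contractible), so the theorem cannot be reduced to the assertion that every vertex lies on a contractible edge, and the extremal behaviour must be controlled by a global count.

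The counting goal is equivalent to producing an assignment $\phi\colon V(G)\to E_c$ of vertices to contractible edges in which no contractible edge is the image of more than two vertices; summing $|\phi^{-1}(e)|$ over the contractible edges $e$ then yields $|G|\le 2|E_c|$, hence $|E_c|\ge\lceil|G|/2\rceil$. To construct such a $\phi$ I would analyse minimal fragments. For a $3$-cut $S=\{x,y,z\}$ call the vertex sets of the components of $G-S$ the \emph{sides}, and choose a side $C$ with $|C|$ minimum over all $3$-cuts. The key structural lemma I would establish is that this minimality forces $G[C\cup S]$ to be ``wheel-like'': every edge with both ends in $C$ is contractible (otherwise a strictly smaller side would arise), and the edges joining $C$ to $S$ supply enough further contractible edges that every vertex of $C$, together with a controlled share of $S$, can be charged to distinct-enough contractible edges lying inside $C\cup S$. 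Replacing $C$ by a single vertex, or deleting $C$ and recursing on the smaller $3$-connected graph obtained from the opposite side together with $S$ (after adding the edges among $S$ needed to preserve $3$-connectivity), then sets up an induction on $|G|$ that carries the partial assignment forward.

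The main obstacle is precisely the bookkeeping in this induction: contractibility is not preserved under contraction or deletion in general, so the reduction cannot be run naively, and one must check that each reduction destroys no more contractible edges than it accounts for. I expect the heart of the argument to be the minimal-fragment lemma together with a careful local analysis at the ``bad'' vertices — those meeting no contractible edge, such as wheel hubs — showing that each such vertex is surrounded by triangles producing a surplus of contractible edges in its neighbourhood, exactly enough to absorb it into the charging scheme without exceeding the budget of two vertices per edge. Confirming that the wheels and their generalisations meet the bound with equality, so that the $\lceil|G|/2\rceil$ estimate is sharp, is where the delicate part of the work lies.
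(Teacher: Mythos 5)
Your outline follows the classical fragment-and-charging route of Ando--Enomoto--Saito rather than the route of this paper (here the theorem is the case $H\cong K_1$ of Theorem \ref{main2}, obtained from a splitter-theorem induction that maintains a free fan family, Theorems \ref{main-general}, \ref{main} and \ref{main-r/2}); but as written your text has a genuine gap: both load-bearing steps are asserted, not proved. The charging map $\phi\colon V(G)\to E_c$ with $|\phi^{-1}(e)|\le 2$ is never constructed, and your ``key structural lemma'' is supported only by the parenthetical ``otherwise a strictly smaller side would arise''. That one-liner is not a proof: if $uv$ is a non-contractible edge with both ends in a minimum side $C$, with witnessing $3$-cut $T=\{u,v,w\}$, one must actually locate a component of $G\setminus T$ strictly inside $C$, which requires the full fragment analysis (using that every vertex of the original cut $S$ has a neighbour in every component of $G\setminus S$, a case split according to whether $w$ lies in $C$, in $S$, or in the far side, and separate treatment of the degenerate cases $|C|\le 2$). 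Worse, even granting that lemma it does not suffice for the count: in a wheel the minimum fragments are singletons with no internal edges at all, and the edges joining $C$ to $S$ are a mix of contractible (rim) and non-contractible (spokes) edges, so your claim that ``the edges joining $C$ to $S$ supply enough further contractible edges'' is exactly the statement that carries the whole theorem, and no argument for it is given.

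The second gap you name yourself but do not close: the proposed reductions (contracting $C$ to a vertex, or deleting $C$ and adding edges inside $S$ to restore $3$-connectivity) preserve contractibility in neither direction --- an edge contractible in the reduced graph need not be contractible in $G$, and the edges added inside $S$ are contractible edges of the reduced graph with no preimage in $G$ --- so the induction ``carries the partial assignment forward'' only if one proves a transfer lemma quantifying how many contractible edges of $G$ survive each reduction, and none is supplied. This is precisely the bookkeeping the present paper avoids by inducting on $|E(G)|-|E(H')|$ via Corollary \ref{seymour-splitter} together with Lemma \ref{resolve-tudo}, where the invariant (a free $H$-fan family and its rank or rank-sum) is designed so that a single contraction or deletion step provably does not decrease it, after which Theorem \ref{main-r/2} converts rank into a forest of $\lc (r+1)/2\rc$ contractible edges --- yielding, beyond the bare count, that the $\lc|G|/2\rc$ contractible edges can be taken to form a forest. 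As it stands, your text is a plausible research plan for reproving Ando--Enomoto--Saito along the original lines, not a proof.
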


If $G$ is a $3$-connected graph with a simple $H$-minor (a minor isomorphic to $H$), we say that $e$ is an \defin{$H$-contractible} edge of $G$ if $G/e$ is $3$-connected with an $H$-minor. We establish:

\begin{theorem}\label{main2}
Let $G$ be a $3$-connected graph with a $3$-connected simple minor $H$. Then $G$ has a forest with $\lc(|G|-|H|+1)/2\rc$ $H$-contractible edges.
\end{theorem}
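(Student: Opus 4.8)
The plan is to generalize the Ando–Enomoto–Saito counting argument (Theorem~\ref{ando}) by tracking, throughout the contraction process, which edges preserve both $3$-connectivity \emph{and} the $H$-minor. The natural framework is a splitter-theorem style induction: I would build $G$ up from $H$ by a sequence of $3$-connected minors $H=G_0,G_1,\dots,G_k=G$, where each $G_{i}$ is obtained from $G_{i+1}$ by contracting a single edge (so that each intermediate step stays $3$-connected with an $H$-minor). The abstract said the theorem follows from a more general splitter theorem proved in the paper, so I would aim to set up exactly the inductive scaffolding that such a splitter theorem provides, and then count.

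First I would reduce the problem to a statement about a fixed $3$-connected $H$-minor $N$ realized inside $G$ via a family of vertex-disjoint connected branch sets (contracting each branch set to a point yields $H$). An edge $e$ of $G$ is a good candidate for $H$-contractibility precisely when contracting $e$ neither destroys $3$-connectivity nor collapses the realization of $H$; edges \emph{within} a branch set are automatically safe for the minor, so the delicate edges are those joining distinct branch sets or lying outside all branch sets. I would quantify the ``room'' for contraction by the parameter $|G|-|H|$, which counts the number of contractions needed to shrink $G$ down to $H$; intuitively, there should be many contractible edges whenever this number is large.

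The key combinatorial step is to show that among the $H$-contractible edges one can select a \emph{forest} of the claimed size rather than merely a large set, and that the $\lceil(|G|-|H|+1)/2\rceil$ bound emerges. I would imitate the original argument by studying the behaviour of contractible edges around a vertex and forming an auxiliary structure (for Ando et al.\ the argument localizes contractible edges in a way that forbids large ``non-contractible'' regions); here the analogue must respect the branch-set structure of $H$. Concretely, I would argue that if $T$ is a maximal forest of $H$-contractible edges, then contracting all of $T$ produces a $3$-connected graph $G/T$ with an $H$-minor in which \emph{every} remaining edge fails to be contractible-while-preserving-$H$; a local analysis at each vertex of $G/T$ (counting triangles and the obstructions they create) then forces $G/T$ to be small, which bounds $|E(T)|$ from below and yields the $\lceil(|G|-|H|+1)/2\rceil$ estimate. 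The factor of $2$ is exactly the loss one expects from triangles, matching the triangle-free improvement to $|G|-|H|$ mentioned in the abstract.

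The hard part will be controlling the interaction between the two constraints simultaneously: keeping $3$-connectivity under contraction is already subtle (an edge is contractible iff it lies in no $3$-edge-cut / its endpoints share no common neighbour that creates a $2$-cut), and layering the minor-preservation condition on top restricts which edges may be used without the standard guarantees. I expect the main obstacle to be proving that the maximal forest $T$ is genuinely large — that is, ruling out configurations where almost every edge is simultaneously blocked for connectivity reasons and for minor reasons. This likely requires the full strength of the splitter theorem advertised in the abstract, which I would invoke to guarantee a ``chain'' of single-edge contractions from $G$ down to $H$ and to control how the set of contractible edges can shrink at each step. Establishing sharpness (the ceiling and the triangle-free bound) would then be a matter of exhibiting explicit families, such as wheels or prisms, where triangles saturate the bound.
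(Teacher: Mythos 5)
Your proposal has two genuine gaps, and they sit exactly where the paper has to do its real work. First, the inductive scaffolding you assume --- a chain $H=G_0,G_1,\dots,G_k=G$ in which each step is a \emph{single edge contraction} preserving $3$-connectivity and the $H$-minor --- does not exist in general. Seymour's Splitter Theorem (Corollary \ref{seymour-splitter} in the paper) only guarantees at each step an edge $x$ such that $G/x$ \emph{or} $G\del x$ is $3$-connected and simple with an $H$-minor; deletion steps cannot be avoided, and they change the edge count without changing $|G|$, so your parameter $|G|-|H|$ does not simply count the steps. The paper's induction (Theorems \ref{main-general} and \ref{main-sum}, via Lemma \ref{resolve-tudo}) must therefore carry the contractible structure through deletions as well, which is the content of Lemma \ref{deletion-inner} and a nontrivial portion of the argument; your proposal has no mechanism for the deletion case.

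Second, your key combinatorial step is unsound: if $T$ is a maximal forest of $H$-contractible edges, it is \emph{not} true that $G/T$ is $3$-connected --- contractibility is not preserved under simultaneous contraction of individually contractible edges, so the ``local analysis at each vertex of $G/T$'' has no well-defined $3$-connected object to analyze, and the claimed conclusion that $G/T$ is small (hence $|E(T)|$ large) is unsupported. This failure is precisely why the paper introduces fans, $H$-inner fans, and \emph{free} $H$-fan families: freeness (every circuit of $G[\F]$ confined to a single member) is the condition under which rank can be counted additively and under which the designated substructures can be contracted with control, and the bound $\lc(|G|-|H|+1)/2\rc$ is then extracted not from a maximality argument but from the delicate counting in Theorem \ref{main-r/2}, where the factor of $2$ arises from triangles with three degree-$3$ vertices (the family $\B$ there) rather than from a generic ``triangle loss.'' Your instinct that triangles cause the halving and that a splitter-type induction is the right frame is correct, but without the fan-family machinery (or an equivalent substitute) both the induction step and the final forest extraction fail.
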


Theorem \ref{main2} for $H\cong K_1$ implies Theorem \ref{ando}, with the additional thesis that the $\lc|G|/2\rc$ contractible edges are in a forest. An interesting consequence of Theorem \ref{main2} is:

\begin{corollary}
Let $G$ be a $3$-connected graph with a $3$-connected simple minor $H$ and a subgraph $K$. Then $G$ has a forest $F$ with $\lc(|G|-|H|+1)/2\rc-|K|+1$ edges avoiding $E(K)$, such that $G/e$ is $3$-connected with an $H$-minor and having $K$ as subgraph for each $e\in F$ (considering that the labels of $V(K)$ are kept in $G/e$).
\end{corollary}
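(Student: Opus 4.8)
The plan is to derive this corollary directly from Theorem~\ref{main2} by pruning the forest it produces. First I would apply Theorem~\ref{main2} to obtain a forest $F_0$ with $t := \lc(|G|-|H|+1)/2\rc$ edges, each of which is $H$-contractible, so that $G/e$ is $3$-connected with an $H$-minor for every $e\in E(F_0)$. The only additional requirement imposed by the corollary is that $G/e$ retain $K$ as a subgraph with the labels of $V(K)$ intact; I would therefore discard from $F_0$ exactly those edges that fail this condition and argue that only few edges are discarded.

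The key observation is that an $H$-contractible edge $e=uv$ preserves $K$ precisely when it has at most one endpoint in $V(K)$. Indeed, if $v\notin V(K)$, then contracting $e$ merges $v$ into $u$ (keeping the label of $u$ when $u\in V(K)$), so every vertex of $V(K)$ survives as a distinct vertex of $G/e$ and every edge of $K$ survives; hence $K$ is still a subgraph of $G/e$ with its labels preserved, and this in particular covers the case $u,v\notin V(K)$. Conversely, if both endpoints lie in $V(K)$, the contraction identifies two vertices of $K$ and a label is lost. Call an edge of $F_0$ \textbf{bad} if both of its endpoints lie in $V(K)$, and note that every edge of $E(K)\cap E(F_0)$ is bad.

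It then remains to bound the number of bad edges. Let $B$ be the set of bad edges of $F_0$. Since $B\subseteq E(F_0)$ and $F_0$ is a forest, the subgraph formed by $B$ is acyclic, and all of its vertices lie in $V(K)$; a forest whose vertices are drawn from a set of size $|K|$ has at most $|K|-1$ edges, so $|B|\le |K|-1$. Setting $F:=F_0\setminus B$, this is again a forest, it avoids $E(K)$ (because $E(K)\cap E(F_0)\subseteq B$), it has at least $t-(|K|-1)=\lc(|G|-|H|+1)/2\rc-|K|+1$ edges, and each of its edges is $H$-contractible with at most one endpoint in $V(K)$. By the key observation, $G/e$ is $3$-connected with an $H$-minor and has $K$ as a labelled subgraph for every $e\in F$, as required.

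I do not expect a serious obstacle, since the corollary is essentially a counting refinement of Theorem~\ref{main2}; the only point needing care is to fix the contraction convention so that the phrase ``the labels of $V(K)$ are kept'' is made precise, namely that when an edge with exactly one endpoint in $V(K)$ is contracted, the merged vertex inherits the label of its $K$-endpoint. Once this convention is adopted, the pruning argument above goes through verbatim.
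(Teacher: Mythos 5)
Your proposal is correct and is essentially the paper's intended argument: the paper offers no explicit proof, presenting the corollary as a direct consequence of Theorem \ref{main2}, and the evident derivation is exactly your pruning of the forest $F_0$ given by that theorem. Your key observation (an $H$-contractible edge preserves the labelled copy of $K$ if and only if at most one endpoint lies in $V(K)$) and the count (the bad edges form a forest on at most $|K|$ vertices, hence number at most $|K|-1$, and they include all of $E(K)\cap E(F_0)$) are both sound and yield the stated bound $\lc(|G|-|H|+1)/2\rc-|K|+1$.
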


Whittle~\cite{Whittle} established the particular case that $|G|-|H|\le 2$ in Theorem \ref{main2} (more generally for matroids). When $|G|-|H|=3$, we have the following strengthening:

\begin{corollary}\label{costa-cor}(Costalonga~\cite[Corollary 1.8]{Costalonga2})
Suppose that $G$ is a $3$-connected graph with a $3$-connected simple minor $H$ and $|G|-|H|\ge 3$. Then $G$ has a forest with $3$ $H$-contractible edges.
\end{corollary}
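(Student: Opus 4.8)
The plan is to split on the value of $|G|-|H|$, the only substantial case being $|G|-|H|=3$.

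I begin by disposing of the case $|G|-|H|\ge 4$. Then $\lc(|G|-|H|+1)/2\rc\ge\lc 5/2\rc=3$, so Theorem~\ref{main2} already produces a forest $F$ with at least three $H$-contractible edges; since any subgraph of a forest is acyclic, any three edges of $F$ give the forest we want. Thus the corollary is immediate once $|G|-|H|\ge 4$, and all the content lies in the boundary case $|G|-|H|=3$, where Theorem~\ref{main2} guarantees only $\lc 4/2\rc=2$ edges.

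For $|G|-|H|=3$ I would try to gain the missing third edge by one reduction step. Pick an $H$-contractible edge $e=uv$ of $G$ (one exists, e.g.\ by Theorem~\ref{main2}), so that $G/e$ is $3$-connected, has an $H$-minor, and satisfies $|G/e|-|H|=2$. Applying the case $|G|-|H|\le 2$ of Theorem~\ref{main2}, due to Whittle~\cite{Whittle}, to $G/e$ yields two $H$-contractible edges $f_1,f_2$ of $G/e$ forming a forest in $G/e$. The set $\{e,f_1,f_2\}$ is then automatically a forest in $G$: a cycle avoiding $e$ would survive in $G/e$ as a cycle on $\{f_1,f_2\}$, and a cycle through $e$ would contract to a cycle on a subset of $\{f_1,f_2\}$ in $G/e$; either outcome contradicts the acyclicity of $\{f_1,f_2\}$ in $G/e$. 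So the forest structure comes for free, and the whole question reduces to a contractibility issue about $f_1$ and $f_2$.

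The hard part, and the reason the statement is cited from \cite{Costalonga2} rather than simply deduced from Theorem~\ref{main2}, is promoting $f_1,f_2$ from $H$-contractible in $G/e$ to $H$-contractible in $G$: knowing that $(G/e)/f_i=G/\{e,f_i\}$ is $3$-connected with an $H$-minor says nothing a priori about $G/f_i$. To close this gap I would either choose $e$ so that $f_1,f_2$ avoid the triangles of $G$ through $u$ or $v$ (these are exactly the edges whose contractibility can be disturbed when $e$ is contracted), or, more robustly, invoke the splitter theorem of this paper to describe the $3$-connected minors of $G$ lying two and three vertices below $G$ and read off three $H$-contractible edges of $G$ directly from that chain. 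This explicit structural analysis for $|G|-|H|=3$ is precisely the content of \cite[Corollary~1.8]{Costalonga2}, and I expect it, rather than the bookkeeping above, to be the main obstacle.
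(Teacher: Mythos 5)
Your reduction is set up correctly as far as it goes: for $|G|-|H|\ge 4$ the bound $\lc(|G|-|H|+1)/2\rc\ge 3$ in Theorem~\ref{main2} settles the claim, your bookkeeping that $\{e,f_1,f_2\}$ is a forest of $G$ whenever $\{f_1,f_2\}$ is a forest of $G/e$ is sound, and the $H$-minor half of $H$-contractibility does indeed lift, since $(G/f_i)/e=(G/e)/f_i$ is a minor of $G/f_i$. But you have not proved the corollary: the case $|G|-|H|=3$, which you yourself identify as the entire content of the statement, is left open, and the obstacle you name is real and not removable by the first device you suggest. By Lemma~\ref{w36-g}, if $G/e,f_i$ is $3$-connected while $G/f_i$ is not, then $G$ has a wye $Y$ with $e\in E(Y)$ and a triangle $T$ with $E(T)-E(Y)=\{f_i\}$; so the failure mode is governed by wyes through $e$ (degree-$3$ endvertices of $e$) sharing two edges with a triangle through $f_i$, not simply by ``triangles of $G$ through $u$ or $v$'', and nothing guarantees an $H$-contractible $e$ avoiding all such configurations, since Whittle's theorem gives you no control over where $f_1,f_2$ land. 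Closing this gap is the substance of \cite[Corollary 1.8]{Costalonga2}, which is a theorem in its own right (proved there inductively, and for matroids), not a short reduction from \cite{Whittle}.

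For what it is worth, the paper itself offers no proof of this corollary either: it is imported verbatim from \cite{Costalonga2} and stated only to situate Theorem~\ref{main2}, which at $|G|-|H|=3$ yields just $\lc 4/2\rc=2$ edges, so the corollary is a genuine strengthening at that boundary value rather than a consequence. Your diagnosis of where the difficulty lives is therefore accurate and consistent with the paper's treatment; but as a standalone proof the attempt is incomplete, since the one case that cannot be read off from Theorem~\ref{main2} is exactly the case you defer to the reference.
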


Corollary \ref{costa-cor} also holds for matroids (Theorem 1.3 of \cite{Costalonga2}). When $G$ has no triangles, we may improve Theorem \ref{main2}:

\begin{theorem}\label{main3}
Suppose that $G$ is a triangle-free $3$-connected graph with a $3$-connected simple minor $H$. Then $G$ has a forest with $|G|-|H|$ edges which are $H$-contractible.
\end{theorem}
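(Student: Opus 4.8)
The plan is to reduce Theorem \ref{main3} to a pure contractibility statement and then exploit the rigidity of $3$-separations in triangle-free graphs. The key observation is that $H$-contractibility comes for free once the edge sits inside a branch set of a \emph{spanning} $H$-minor. Fix such a minor, i.e.\ a partition of $V(G)$ into $|H|$ connected branch sets $B_1,\dots,B_{|H|}$ realizing $H$; for any edge $e$ with both ends in a common $B_i$, the graph $G/e$ still carries this branch decomposition (contracting $e$ inside $B_i$ deletes no edge between distinct branch sets), so $G/e$ automatically has an $H$-minor. Hence, for such an $e$, being $H$-contractible is equivalent to being \emph{contractible} in the ordinary sense, i.e.\ $G/e$ being $3$-connected. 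Since a spanning tree of each $B_i$ contributes $|B_i|-1$ edges, $\sum_i(|B_i|-1)=|G|-|H|$, and the union of these trees is a forest, it suffices to produce a spanning $H$-minor each of whose branch sets is internally spanned by contractible edges of $G$.

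The engine for this is a structural analysis of non-contractible edges, in which triangle-freeness is decisive. Recall that $e=uv$ is non-contractible exactly when $\{u,v,w\}$ is a $3$-cut for some $w$; in a $3$-connected graph every side of this cut is adjacent to all of $u,v,w$, so a singleton side would be a common neighbour of $u$ and $v$ and would produce a triangle on $e$. Therefore, in a triangle-free $G$, \emph{every} side of such a cut has at least two vertices. I would use this ``no singleton side'' property, together with a minimal-cut argument in the spirit of Ando, Enomoto and Saito (Theorem~\ref{ando}), to settle the case $H\cong K_1$: the contractible edges of a triangle-free $3$-connected graph form a connected spanning subgraph and hence contain a spanning tree. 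This already yields the spanning-tree-of-contractible-edges statement mentioned in the abstract, and it pinpoints the source of the factor-two gain over Theorem~\ref{main2}: the configurations responsible for the $1/2$ loss there are precisely triangles, which are now absent.

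For general $H$ I would pass from this local abundance of contractible edges to the required branch decomposition by induction on $|G|-|H|$. Contract a contractible edge $e$ lying inside a branch set (so $e$ is $H$-contractible and $G/e$ is $3$-connected with an $H$-minor), apply the inductive hypothesis to $G/e$, and lift the resulting forest by adjoining $e$. The reduction above guarantees that each lifted edge $f$ still lies inside a branch set of a spanning $H$-minor of $G$ (pull back the branch sets of $G/e$, absorbing the two ends of $e$), so each $G/f$ retains its $H$-minor; and a standard cycle check shows the lifted edge set together with $e$ is again a forest.

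The main obstacle is twofold, and is where the real work lies. First, contraction does not preserve triangle-freeness: contracting $e=uv$ creates a triangle exactly when $e$ lies on a $4$-cycle, and there exist triangle-free $3$-connected graphs, such as the cube $Q_3$, in which every edge lies on a $4$-cycle. So the induction cannot be run inside the class of triangle-free graphs, and one must either track a weaker invariant under contraction or, as in this paper, route the whole argument through the general splitter theorem of the next section, using the triangle-free hypothesis only on the original graph $G$. Second, and more delicately, an edge that is contractible in $G/e$ need not be contractible in $G$, since contracting $e$ can destroy a $3$-cut that witnessed non-contractibility. Controlling exactly which $3$-cuts are destroyed --- again via the no-singleton-side structure forced by triangle-freeness in $G$ --- is the crux, and I expect this transfer of contractibility back to the original graph, rather than the counting, to be the hard part; sharpness of the bound $|G|-|H|$ is then exhibited by explicit extremal families.
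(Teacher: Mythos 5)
There is a genuine gap --- in fact the two obstacles you flag at the end are not side issues but the entire content of the theorem, and your outline does not overcome either. Most fatally, your induction on $|G|-|H|$ cannot be closed as stated: the inductive hypothesis is Theorem \ref{main3} itself, whose statement requires triangle-freeness, and as you yourself note (via the cube $Q_3$), $G/e$ need not be triangle-free, so the hypothesis simply does not apply to $G/e$. This is exactly why the paper never proves Theorem \ref{main3} by direct induction: it deduces it in one line from Theorem \ref{main}, which is stated for \emph{arbitrary} $3$-connected simple $G$ and asserts a free $H$-fan family of rank at least $|G|-|H|$. The fan families are precisely the ``weaker invariant to track under contraction'' whose necessity you anticipate: the non-degenerated fans absorb the triangles created along a Seymour splitter-theorem chain (Corollary \ref{seymour-splitter}, Lemma \ref{resolve-tudo}), and triangle-freeness of the original $G$ is used only at the very last moment, to observe that a triangle-free graph contains no non-degenerated fan, so every member of the family is a singleton $H$-contractible edge and freeness makes their union a forest of $|G|-|H|$ edges. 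Without some such strengthened invariant your induction does not even start, and you propose none.

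Beyond this, two steps are asserted rather than proved. First, your base case $H\cong K_1$ --- that the contractible edges of a triangle-free $3$-connected graph form a connected spanning subgraph --- is a result of essentially the same depth as the target theorem; in the paper it appears as a \emph{corollary} of the main machinery, not as an input, and ``a minimal-cut argument in the spirit of Ando--Enomoto--Saito'' is not an argument (their method inherently yields the factor-$\lceil|G|/2\rceil$ bound, i.e.\ loses exactly the factor of two you must avoid). Second, even granting the base case, your inductive step needs a contractible edge with \emph{both ends inside a single branch set} of a spanning $H$-model; a connected spanning subgraph of contractible edges of $G$ need not contain any edge internal to a prescribed branch set $B_i$ (its trees may repeatedly exit and re-enter $B_i$), so the edge you contract is not located. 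Finally, the transfer of contractibility back from the minor to $G$ --- $(G/e)/f$ being $3$-connected does not give $G/f$ $3$-connected --- is, as you concede, left entirely open; that transfer is where the paper spends its effort (Lemmas \ref{w36-g}, \ref{deletion-inner}, \ref{contraction-inner1}, \ref{contraction-inner2}, \ref{deg3 hub}, \ref{contraction-inner}, culminating in Lemma \ref{resolve-tudo}). What you do have correctly is the easy observation that for an edge inside a branch set of a spanning model, $H$-contractibility reduces to plain contractibility, plus an accurate map of the difficulties; but the proposal as written proves neither the base case nor the inductive step.
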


Although Egawa et. al.~\cite{Egawa} proved that a sufficiently large $3$-connected graph $G$ has $|G|+5$ contractible edges, the number of $H$-contractible edges in Theorem \ref{main3} is sharp. We conjecture that the analogue of Theorem \ref{main3} also holds for matroids, what is not true for Theorem \ref{main2}, because $M:=M\s(K_{3,n}''')$ has only $3$-elements $e$ such that $\si(M/e)$ is $3$-connected, see \cite[Theorem 2.10]{Wu}. Theorem \ref{main3} also yields the following corollary:

\begin{corollary}
If $G$ is a triangle-free $3$-connected graph, then $G$ has a spanning forest whose edges are contractible.
\end{corollary}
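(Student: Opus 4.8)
The plan is to derive this corollary as the triangle-free analogue of the way Theorem~\ref{ando} is obtained from Theorem~\ref{main2}, namely by specializing the minor $H$ to the trivial graph $K_1$. First I would check that $K_1$ is a legitimate choice for $H$ in Theorem~\ref{main3}: it is simple, and under the convention adopted here (which drops the requirement $|G|>k$) the single-vertex graph is vacuously $3$-connected, since removing any set of fewer than three vertices cannot disconnect it. Moreover $G$, being $3$-connected and nonempty, certainly has $K_1$ as a minor.

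With $H\cong K_1$, the notion of an $H$-contractible edge collapses to the ordinary one: every nonempty graph retains a $K_1$-minor after any contraction, so an edge $e$ is $K_1$-contractible precisely when $G/e$ is $3$-connected. Applying Theorem~\ref{main3} therefore produces a forest $F\cont G$ consisting of $|G|-|K_1|=|G|-1$ contractible edges.

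It remains to upgrade ``forest with $|G|-1$ edges'' to ``spanning forest''. Here I would invoke the elementary count that a forest with $m$ edges and $c$ components has exactly $m+c$ vertices; applied to $F$ this gives at least $|G|-1+1=|G|$ vertices. Since $F$ is a subgraph of $G$ and $G$ has $|G|$ vertices, $F$ must meet every vertex and consist of a single component, i.e.\ it is in fact a spanning tree of $G$ (recall $G$ is connected, being $3$-connected). Its edges are contractible by construction, which is exactly the assertion, and gives the stronger spanning-tree conclusion promised in the abstract.

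I expect no serious obstacle: the entire content is carried by Theorem~\ref{main3}, and the only points demanding care are confirming that $K_1$ qualifies as a $3$-connected simple minor under the paper's conventions, and the trivial but essential observation that a subforest of $G$ with $|G|-1$ edges is forced to be spanning and connected.
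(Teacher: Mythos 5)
Your proof is correct and follows exactly the route the paper intends when it says Theorem~\ref{main3} ``yields'' this corollary: specialize $H\cong K_1$ (legitimate under the paper's convention on $3$-connectedness, just as in the derivation of Theorem~\ref{ando} from Theorem~\ref{main2}), observe that $K_1$-contractibility is ordinary contractibility, and note that a forest with $|G|-1$ edges inside $G$ is forced to be a spanning tree. The vertex-count argument you give to upgrade the forest to a spanning tree is the right (and only needed) finishing step.
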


In order to prove Theorems \ref{main2} and \ref{main3}, we establish a more general results, but, first, we will need some definitions. We define an \defin{wye} of $G$ as a subgraph of $G$ isomorphic to the star with $3$ edges. We say that a simple subgraph $F$ of $G$ is a \defin{fan} of $G$ if:
\begin{itemize}
 \item [(F1)] $F$ has at least $3$ edges,
 \item [(F2)] $E(F)$ has an ordering $a_0,a_1,\dots,a_{m+1}$ of its distinct edges such that, for $i=1,\dots n$, $\{a_{i-1},a_i,a_{i+1}\}$ induces a wye or a triangle in $G$ and,
 \item [(F3)] for $0<i<m$, $\{a_{i-1},a_i,a_{i+1}\}$ induces a wye in $G$ if and only if $\{a_{i},a_{i+1},a_{i+2}\}$ induces a triangle. 
\end{itemize}

\begin{figure}
\hfill
\begin{minipage}{7cm}\centering\caption{ }A triangle-to-triangle fan.\label{pic-triangle}
\begin{tikzpicture}[scale=1.2]
\tikz[label distance=0mm];
\draw (20:2.5cm) -- (50:2.5cm) -- (80:2.5cm);
\draw (100:2.5cm) -- (130:2.5cm) -- (160:2.5cm);
\draw (0,0) -- (20:2.5cm) -- (0,0)  -- (0,0) -- (50:2.5cm) -- (0,0) -- (80:2.5cm);
\draw (0,0) -- (100:2.5cm) -- (0,0) -- (130:2.5cm) -- (0,0) -- (160:2.5cm);
\node (dots1)  at  (90:2.49cm)  {$\dots$};
\node (u) at (0,0) [label=below: $u$, label distance=0.1cm] {};
\node (v1) at (160:2.8cm) {$v_1$};\node (v2) at (130:2.8cm) {$v_2$};\node (v3) at (100:2.7cm) {$v_3$};\node (vn-2) at (80:2.7cm){$v_{n-2}$};\node (vn-1) at (47:2.8cm){$v_{n-1}$};\node (vn) at (20:2.8cm) {$v_n$};
\node (x1) at (145:2.6cm) {$x_1$};\node (x2) at (115:2.6cm) {$x_2$};\node (xn-2) at (63:2.6cm){$x_{n-2}$};\node (xn-1) at (32:2.8cm){$x_{n-1}$};
\node (y1) at (167:1.6cm) {$y_1$};\node (y2) at (138:1.6cm) {$y_2$};\node (y3) at (108:1.6cm) {$y_3$};\node (vn-1) at (65:1.7cm){$y_{n-2}$};\node (vn-1) at (37:1.6cm){$y_{n-1}$};\node (vn) at (13:1.6cm) {$y_n$};
\fill (160:2.5cm) circle (0.07cm);\fill  (130:2.5cm) circle (0.07cm);\fill  (100:2.5cm) circle (0.07cm);\fill  (80:2.5cm) circle (0.07cm);\fill  (50:2.5cm) circle (0.07cm);\fill (20:2.5cm) circle (0.07cm);\fill (0,0) circle(0.07cm);
\end{tikzpicture}
\end{minipage}\hfill\begin{minipage}{7cm}\centering\caption{ }\label{pic-both}A wye-to-triangle fan.
\begin{tikzpicture}[scale=1.2]
\tikz[label distance=0mm];
\draw (20:2.5cm) -- (50:2.5cm) -- (80:2.5cm);
\draw (100:2.5cm) -- (130:2.5cm) -- (160:2.5cm) -- (180:2.5cm);
\draw (0,0) -- (20:2.5cm) -- (0,0)  -- (0,0) -- (50:2.5cm) -- (0,0) -- (80:2.5cm);
\draw (0,0) -- (100:2.5cm) -- (0,0) -- (130:2.5cm) -- (0,0) -- (160:2.5cm);
\node (dots1)  at  (90:2.49cm)  {$\dots$};
\node (u) at (0,0) [label=below: $u$, label distance=0.1cm] {};
\node (v0) at (180:2.8cm) {$v_0$};\node (v1) at (160:2.8cm) {$v_1$};\node (v2) at (130:2.8cm) {$v_2$};\node (v3) at (100:2.7cm) {$v_3$};\node (vn-2) at (80:2.7cm){$v_{n-2}$};\node (vn-1) at (47:2.8cm){$v_{n-1}$};\node (vn) at (20:2.8cm) {$v_n$};
\node (x0) at (170:2.7cm) {$x_0$};\node (x1) at (145:2.6cm) {$x_1$};\node (x2) at (115:2.6cm) {$x_2$};\node (xn-2) at (63:2.6cm){$x_{n-2}$};\node (xn-1) at (32:2.8cm){$x_{n-1}$};
\node (y1) at (167:1.6cm) {$y_1$};\node (y2) at (138:1.6cm) {$y_2$};\node (y3) at (108:1.6cm) {$y_3$};\node (vn-1) at (65:1.7cm){$y_{n-2}$};\node (vn-1) at (37:1.6cm){$y_{n-1}$};\node (vn) at (13:1.6cm) {$y_n$};
\fill (180:2.5cm) circle (0.07cm);\fill (160:2.5cm) circle (0.07cm);\fill  (130:2.5cm) circle (0.07cm);\fill  (100:2.5cm) circle (0.07cm);\fill  (80:2.5cm) circle (0.07cm);\fill  (50:2.5cm) circle (0.07cm);\fill (20:2.5cm) circle (0.07cm);\fill (0,0) circle(0.07cm);
\end{tikzpicture}
\end{minipage}\hfill
\vspace{1cm}
\begin{minipage}{7cm}\centering\caption{ }A wye-to-wye fan.\label{pic-wye}
\begin{tikzpicture}[scale=1.2]
\tikz[label distance=0mm];
\draw (0:2.5cm) -- (20:2.5cm) -- (50:2.5cm) -- (80:2.5cm);
\draw (100:2.5cm) -- (130:2.5cm) -- (160:2.5cm) -- (180:2.5cm);
\draw (0,0) -- (20:2.5cm) -- (0,0)  -- (0,0) -- (50:2.5cm) -- (0,0) -- (80:2.5cm);
\draw (0,0) -- (100:2.5cm) -- (0,0) -- (130:2.5cm) -- (0,0) -- (160:2.5cm);
\node (dots1)  at  (90:2.49cm)  {$\dots$};
\node (u) at (0,0) [label=below: $u$, label distance=0.1cm] {};
\node (v0) at (180:2.8cm) {$v_0$};\node (v1) at (160:2.8cm) {$v_1$};\node (v2) at (130:2.8cm) {$v_2$};\node (v3) at (100:2.7cm) {$v_3$};\node (vn-2) at (80:2.7cm){$v_{n-2}$};\node (vn-1) at (47:2.8cm){$v_{n-1}$};\node (vn) at (20:2.8cm) {$v_n$};\node (vn+1) at (0:2.9cm) {$v_{n+1}$};
\node (x0) at (170:2.7cm) {$x_0$};\node (x1) at (145:2.6cm) {$x_1$};\node (x2) at (115:2.6cm) {$x_2$};\node (xn-2) at (63:2.6cm){$x_{n-2}$};\node (xn-1) at (32:2.8cm){$x_{n-1}$};\node (xn) at (10:2.7cm)  {$x_n$};
\node (y1) at (167:1.6cm) {$y_1$};\node (y2) at (138:1.6cm) {$y_2$};\node (y3) at (108:1.6cm) {$y_3$};\node (vn-1) at (65:1.7cm){$y_{n-2}$};\node (vn-1) at (37:1.6cm){$y_{n-1}$};\node (vn) at (13:1.6cm) {$y_n$};
\fill (180:2.5cm) circle (0.07cm);\fill (160:2.5cm) circle (0.07cm);\fill  (130:2.5cm) circle (0.07cm);\fill  (100:2.5cm) circle (0.07cm);\fill  (80:2.5cm) circle (0.07cm);\fill  (50:2.5cm) circle (0.07cm);\fill (20:2.5cm) circle (0.07cm);\fill (0:2.5cm) circle (0.07cm);\fill (0,0) circle(0.07cm);
\end{tikzpicture}
\end{minipage}
\end{figure}

In this case, we say that $a_0,a_1,\dots,a_{m+1}$ is a \defin{fan ordering} of $F$. It is easy to check that a fan must be isomorphic to one of the graphs in Figures \ref{pic-triangle}, \ref{pic-both} or \ref{pic-wye}, where $u,v_0,\dots,v_{n+1}$ are pairwise distinct with the possible exception that $v_0$ and $v_n$ may be equal in figure \ref{pic-both} and $v_0$ and $v_{n+1}$ may be equal in figure \ref{pic-wye}. Note that, if $G$ is $3$-connected and $v_0=v_n$ or $v_{n+1}$, then $G$ is  a wheel.  To simplify our language, when there is no risk of confusion, we may identify a fan of $G$ with its edge-set or with one of its fan orderings. We say that a fan is \defin{triangle-to-triangle}, \defin{wye-to-triangle} or \defin{wye-to-wye}, according whether they begin or end with triangles or wyes, as described in Figures \ref{pic-triangle}, \ref{pic-both} and \ref{pic-wye}. The edges $y_1,\dots,y_n$ like in the figures are the \defin{spokes} of $F$, the vertex $u$ is the \defin{hub} of $F$ and the path induced by the edges other than the spokes is called the \defin{rim} of $F$. 

Suppose that $F^+$ is a maximal wye-to-wye fan of $G$ (this is, $F^+$ is not a proper subgraph of other wye-to-wye fan of $G$). Let $x_0,y_1,x_1\dots,y_n,x_n$ be a fan ordering of $F^+$,  we say that $F:=G[y_1,x_1,\dots,y_n]$ is an \defin{$H$-inner fan} of $G$ provided $G/F$ is $3$-connected with an $H$-minor. An \defin{inner fan} of $G$ is an $H$-inner fan for $H\cong K_1$. An $H$-inner fan $F$ of $G$ is \defin{non-degenerated} if $|E(F)|\ge 2$. If $|E(F)|=1$, then $F$ is said to be \defin{degenerated}.

The \defin{rank} of $X\cont E(G)$ in $G$ is the number $r_G(X)$ of edges in a spanning forest of $G[X]$, or, equivalently, the number of vertices in $G[X]$ minus the number of connected components of $G[X]$. For a family $\F:=\{X_1,\dots,X_n\}$ of subsets of $E(G)$, we define the rank of $\F$ in $G$ by $r_G(\F):=r_G(X_1\u\cdots\u X_n)$ and $G[\F]:=G[X_1\u\cdots\u X_n]$; moreover, the \defin{rank-sum} of $\F$ is defined as $rs_G(\F):=r_G(X_1)+\cdots+r_G(X_n)$. We say that a family $\F$ of subsets of $E(G)$ is \defin{free} if its members are pairwise disjoint and the edge-set of each circuit of $G[\F]$ is contained in a member of $\F$. Equivalently, $\F:=\{X_1,\dots,X_n\}$ is free when $r_G(\F)=rs_G(\F)$. A family $\F$ of subsets of $E(G)$ is an \defin{$H$-fan family} if the members of $\F$ are pairwise disjoint and each member of $\F$ is an $H$-inner fan or a singleton set with an $H$-contractible edge. When we talk about an inner fan without mention to a minor $H$, it is the case that $H\cong K_1$. Now we are in conditions to state our main theorems:

\begin{theorem}\label{main-general}
Let $G$, $H'$ and $H$ be $3$-connected simple graphs such that $H$ is a minor of $H'$, $H'$ is a minor of $G$ and $|H|\ge1$. Suppose that $H'$ has a free $H$-fan family with rank $r$. Then $G$ has a free $H$-fan family with rank at least $|G|-|H'|+r$.
\end{theorem}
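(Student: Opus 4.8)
The plan is to induct on $|E(G)|-|E(H')|$, pushing the prescribed free $H$-fan family of $H'$ upward through single-element extensions. The base case $|E(G)|=|E(H')|$ forces $G\cong H'$: since a $3$-connected graph has minimum degree at least $3$, neither an edge deletion, an edge contraction, nor a vertex deletion can be performed without removing an edge, so the minor $H'\le G$ having as many edges as $G$ must equal $G$; then $|G|=|H'|$ and the hypothesized family already has the demanded rank $r$. For the inductive step I would invoke Seymour's Splitter Theorem to obtain a $3$-connected graph $G_1$ with $H'\le G_1$ and either $G_1=G\setminus e$ or $G_1=G/e$, dispatching the wheel exceptions of that theorem separately by the explicit fan description of a wheel noted after Figure \ref{pic-wye}. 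The inductive hypothesis supplies a free $H$-fan family $\F_1$ of $G_1$ of rank at least $|G_1|-|H'|+r$, and the whole problem reduces to lifting $\F_1$ from $G_1$ to $G$ with the rank required in each case.

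In the deletion case $G_1=G\setminus e$ one has $|G_1|=|G|$, so the target rank is unchanged; moreover for any $X\cont E(G_1)$ the induced subgraphs $G[X]$ and $G_1[X]$ coincide, so all ranks are literally preserved, and the only real task is to verify that every member survives the new triangles and wyes created by $e$ (a singleton $H$-contractible edge survives trivially, since adding $e$ back keeps a contraction $3$-connected and can only enrich its minors). In the contraction case $G_1=G/e$ one has $|G_1|=|G|-1$, so exactly one unit of rank must be gained. The decisive observation is that $e$ is automatically $H$-contractible in $G$: indeed $G/e=G_1$ is $3$-connected and, since $H\le H'\le G_1$, has an $H$-minor. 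I would therefore manufacture the extra rank by incorporating $e$, either as a fresh singleton member $\{e\}$ or by lengthening an inner fan meeting $e$. For $\{e\}$ to be admissible the enlarged family must remain free, which amounts to requiring that, after contracting every member of $\F_1$ to a point, the ends of $e$ lie in distinct components; when $|G|-|H'|\ge 3$ such an $e$ can be drawn from the supply of $H'$-contractible edges furnished by Corollary \ref{costa-cor}.

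The heart of the difficulty, and the reason the notion of inner fan is framed through maximal wye-to-wye fans, is that inner fans are not preserved by un-contraction. Splitting the vertex to which $e$ was contracted can lengthen, shorten, merge, or sever the maximal wye-to-wye fans of $G_1$, and it can damage the two defining requirements of an $H$-inner fan $F$, namely that $G/F$ be $3$-connected and retain an $H$-minor. For each member $F$ of $\F_1$ I would thus follow how the maximal wye-to-wye fan around $F$ transforms when $e$ is restored, re-extract the inner fan of $G$ from the new maximal fan, and bound the resulting change in rank so that the family's total rank still reaches $|G|-|H'|+r$. I expect this bookkeeping to split according to the three fan types of Figures \ref{pic-triangle}--\ref{pic-wye}, since whether the restored edge meets a spoke or the rim dictates whether a triangle or a wye forms at an end, and hence whether the inner fan grows or shrinks.

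The single hardest point will be the contraction lift: proving that, for any admissible reduction $G_1=G/e$, one can recover from $\F_1$ a free $H$-fan family of $G$ whose rank exceeds that of $\F_1$ by at least one, while keeping $G/F$ $3$-connected and retaining an $H$-minor for every member $F$. For the smallest configurations, where the Splitter Theorem leaves little room, I would fall back on Whittle's case $|G|-|H'|\le 2$ of Theorem \ref{main2}. Once this local contraction lemma is in place, the rank accounting above closes the induction: each deletion step preserves the target and each contraction step raises it by one, exactly matching the unit drop of $|G|-|H'|$ at each contraction along the chain from $G$ down to $H'$.
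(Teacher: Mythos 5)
Your skeleton --- induction on $|E(G)|-|E(H')|$, the Splitter Theorem (Corollary \ref{seymour-splitter}) to reduce to $G\del e$ or $G/e$, wheels dispatched separately, rank preserved under the deletion lift and increased by one under the contraction lift --- is exactly the paper's proof of Theorem \ref{main-general}, which is three lines long precisely because all of that is routine. The entire content of the theorem sits in the lifting step, which the paper isolates as Lemma \ref{resolve-tudo} (with supporting Lemmas \ref{deletion-inner}, \ref{contraction-inner1}, \ref{contraction-inner2}, \ref{deg3 hub} and \ref{contraction-inner}), and your proposal does not contain it; moreover, where you do commit to concrete mechanisms they fail. You cannot ``draw $e$ from the supply of $H'$-contractible edges furnished by Corollary \ref{costa-cor}'': the splitter edge is dictated before the inductive family exists (the family $\F_1$ lives in $G_1=G/e$, so you cannot first lift $\F_1$ and then shop for an $e$ whose endpoints land in distinct components of the contracted union), and nothing guarantees that any of the three edges of Corollary \ref{costa-cor} keeps a lifted family free. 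Likewise, falling back on Whittle's $|G|-|H|\le 2$ case yields contractible edges, not the relative statement about a free family extending the prescribed rank $r$ coming from $H'$, so it cannot patch the small configurations of the induction.

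The part you wave at (``follow how the maximal wye-to-wye fan transforms \dots and bound the resulting change in rank'') is where the argument actually lives, and the difficulty is global rather than member-by-member. When $G/F_i$ fails to be $3$-connected for a lifted member $F_i$, Lemma \ref{w36-g} produces a wye $\{x,\chi(i),\psi(i)\}$ and the repair of Lemma \ref{contraction-inner2} extends $F_i$ by exactly these edges, gaining one unit of rank; but $\chi(i)$ or $\psi(i)$ may already lie in \emph{other} members of $\F_1$, which must then be demoted or dismembered (the paper's classes $K$ and $L$), losing rank. The crux of the paper's proof is that the gains dominate the losses --- the injective map $\Phi:K\u L\rightarrow I$ of \ref{new5}, built from the local analysis in \ref{new4} --- together with adjoining the extra member $\{x\}$ exactly when $|I|=|K|+|L|$, followed by a separate, delicate verification that the assembled family is free (claims \ref{new7}--\ref{new12}); freeness of $\X\u\big\{\{x\}\big\}$ is not the innocuous component condition you state, since $x$ can be a chord of a circuit threading several members. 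Your plan also omits the case in which a lifted member is not a fan of $G$ at all (its hub was split by $e$), which must be shattered into rim singletons plus one sub-fan via Lemma \ref{contraction-inner}, and it understates the deletion case, where an inner fan of $G\del e$ need not survive but must be cut at the attachment vertices of $e$ and the pieces proved contractible (Lemma \ref{deletion-inner}, via Corollary \ref{cont-biweb}). In short: the scaffolding matches the paper, but the load-bearing lemma is missing and the substitutes you propose would not close the gap.
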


For $H=H'$ in Theorem \ref{main-general}, we have:

\begin{theorem}\label{main}
Let $G$ be a $3$-connected simple graph with a $3$-connected simple minor $H$ satisfying $|H|\ge1$. Then, $G$ has a free $H$-fan family with rank at least $|G|-|H|$.
\end{theorem}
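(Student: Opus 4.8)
The plan is to derive Theorem~\ref{main} as the specialization $H'=H$ of Theorem~\ref{main-general}. First I would observe that in this case the hypotheses of Theorem~\ref{main-general} require a $3$-connected simple graph $H'=H$ that is a minor of $G$, together with a free $H$-fan family of $H$ itself having some rank $r$. The natural choice is the \emph{empty} fan family $\F=\emptyset$, which is vacuously free and has rank $r=0$: its members are pairwise disjoint and $r_H(\emptyset)=rs_H(\emptyset)=0$. With $H'=H$ and $r=0$, Theorem~\ref{main-general} immediately yields a free $H$-fan family of $G$ with rank at least $|G|-|H|+0=|G|-|H|$, which is exactly the conclusion of Theorem~\ref{main}. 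So the whole deduction is a one-line substitution, once the trivial family is verified to be an admissible free $H$-fan family.

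\begin{proofof}
Apply Theorem~\ref{main-general} with $H'=H$. Since $H$ is a $3$-connected simple minor of itself and of $G$ with $|H|\ge 1$, the hypotheses are met. The empty family $\F=\emptyset$ is a free $H$-fan family of $H'=H$ with rank $r_H(\emptyset)=0$, because its (vacuously many) members are pairwise disjoint and $G[\emptyset]$ has no circuits, so $r_H(\F)=0=rs_H(\F)$. By Theorem~\ref{main-general}, $G$ has a free $H$-fan family with rank at least $|G|-|H'|+r=|G|-|H|+0=|G|-|H|$.
\end{proofof}

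The only point that requires a moment's care is confirming that the empty family legitimately qualifies as a free $H$-fan family of rank $0$, so that $r=0$ is an admissible starting value; this is where I would expect a careful reader to pause, since the definition of an $H$-fan family is phrased in terms of its members. Because both the ``pairwise disjoint'' condition and the circuit-containment condition are universally quantified over members, they hold vacuously for $\F=\emptyset$, and the rank identity $r_G(\F)=rs_G(\F)$ reads $0=0$; hence no genuine obstacle arises. If one preferred to avoid the empty family, an alternative is to take any single $H$-contractible edge of $H$ (or verify directly that $H$ trivially admits a degenerate inner fan), giving $r=1$ and the even weaker-looking but still correct bound; but the empty family is cleanest and gives the stated constant $|G|-|H|$ on the nose.
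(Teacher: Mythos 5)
Your proof is correct and is exactly the paper's derivation: Theorem \ref{main} is obtained by setting $H'=H$ in Theorem \ref{main-general} with the empty (vacuously free) $H$-fan family of rank $r=0$, giving the bound $|G|-|H'|+0=|G|-|H|$. One caution about your optional aside only: $H$ has no $H$-contractible edge and no degenerate $H$-inner fan, since contracting anything in $H$ leaves a graph with fewer than $|H|$ vertices, which cannot contain an $H$-minor --- but your main argument does not rely on this, so it stands.
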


For $H\cong K_1$ in Theorem \ref{main}, we may derive the following structural result:

\begin{corollary}\label{structural}
	Let $G$ be a $3$ connected graph. Then $G$ has a subgraph $F$ such that $V(F)=V(G)$ and each block of $F$ is an inner fan of $G$ or is induced by a  contractible edge of $G$.
\end{corollary}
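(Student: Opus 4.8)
The plan is to specialize Theorem~\ref{main} to $H\cong K_1$ and then read the block decomposition off the subgraph induced by the resulting fan family. Since $K_1$ is a $3$-connected simple minor of every graph and $|K_1|=1$, Theorem~\ref{main} (applied to $G$, which we may take to be simple) supplies a free $K_1$-fan family $\F=\{X_1,\dots,X_n\}$ of $G$ with $r_G(\F)\ge|G|-1$. By definition each member $X_i$ is either an inner fan of $G$ or a singleton $\{e\}$ whose edge is contractible; a degenerate inner fan, consisting of a single edge $e$ with $G/e$ $3$-connected, is itself induced by a contractible edge. I set $F:=G[\F]$, the subgraph of $G$ induced by $X_1\u\cdots\u X_n$, and claim this is the required subgraph; it remains to describe its blocks and to check that it is spanning.

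First I would prove that the blocks of $F$ are exactly the subgraphs $G[X_1],\dots,G[X_n]$. Each member is connected, and a non-degenerate inner fan (a hub joined to a path on at least two vertices) is $2$-connected, while a degenerate member is a single edge. The freeness of $\F$ says precisely that the edge-set of every cycle of $F$ is contained in one member. Hence two edges belonging to distinct members can never lie on a common cycle of $F$, so no block of $F$ meets two members; that is, every block is contained in some $G[X_i]$. Conversely, in a $2$-connected member any two edges lie on a common cycle, so such a member is confined to a single block. Putting the two inclusions together, each $G[X_i]$ is a block of $F$, and by the previous paragraph it is either an inner fan or is induced by a contractible edge, exactly as the statement demands.

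Finally I would verify $V(F)=V(G)$. As $F=G[\F]$ has no isolated vertices, the identity $r_G(\F)=|V(F)|-c$ holds, where $c\ge1$ is the number of connected components of $F$. Combined with $r_G(\F)\ge|G|-1$ this gives $|V(F)|\ge|G|-1+c\ge|G|$, and since $V(F)\cont V(G)$ we conclude $V(F)=V(G)$, the case $|G|=1$ being trivial. I expect the one genuinely delicate step to be the block identification, where freeness must be invoked in both directions --- to keep edges of different members in different blocks and to keep each non-degenerate member inside one block --- together with the $2$-connectedness of a non-degenerate inner fan; the spanning property and the translation of definitions are then bookkeeping.
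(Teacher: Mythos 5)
Your proof is correct and takes exactly the route the paper intends: Corollary \ref{structural} is stated there as an immediate consequence of Theorem \ref{main} with $H\cong K_1$ (no separate proof is given), and your write-up supplies precisely that derivation. Both of your key steps are sound --- freeness forces every block of $F=G[\F]$ to lie in a single member while $2$-connectedness of a non-degenerate inner fan keeps each member inside a single block, and the rank count $|V(F)|=r_G(\F)+c\ge(|G|-1)+1$ yields the spanning property.
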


It is clear that Theorem \ref{main3} is a corollary to Theorem \ref{main}. If $\F$ is a pairwise disjoint family of subsets of $E(G)$ and $T$ is a triangle of $G$, we say that $T$ is a \defin{crossing triangle} of $F$ if $T$ is a triangle of $G[F]$ but $E(T)$ is not contained in any member of $\F$. If we weaken the freeness condition of Theorem \ref{main-general} to the absence of crossing triangles, we have the following result:

\begin{theorem}\label{main-sum}
Let $G$, $H'$ and $H$ be $3$-connected simple graphs such that $H$ is a minor of $H'$, $H'$ is a minor of $G$ and $|H|\ge1$. Suppose that $H'$ has an $H$-fan family without crossing triangles with rank-sum $s$. Then $G$ has an $H$-fan family without crossing triangles with rank-sum at least $|G|-|H'|+s$.
\end{theorem}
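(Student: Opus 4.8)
The plan is to run the same inductive scheme that underlies Theorem~\ref{main-general}, but carrying the pair \emph{(no crossing triangle, rank-sum)} in place of \emph{(free, rank)}. I would induct on the number of deletions and contractions needed to reduce $G$ to $H'$, the base case being $G=H'$, where the hypothesised family of $H'$ is already an $H$-fan family of $G$ without crossing triangles with $rs_G(\F)=s=|G|-|H'|+s$. For the inductive step I would invoke a splitter theorem to find a single edge $e$ for which $N:=G/e$ or $N:=G\del e$ is $3$-connected, has $H'$ as a minor, and is one reduction closer to $H'$; the inductive hypothesis then furnishes an $H$-fan family $\F_N$ of $N$ without crossing triangles with $rs_N(\F_N)\ge |N|-|H'|+s$, and everything reduces to lifting $\F_N$ back across the single reduction.

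Consider first a contraction step $N=G/e$ with $e=uv$. Here $e$ is automatically $H$-contractible in $G$, since $G/e=N$ is $3$-connected and inherits the $H$-minor from $H'$. The natural move is to take $\F_G:=\F_N\u\{\{e\}\}$. Because $e\notin E(N)$, the new member is disjoint from the others and has rank $1$; moreover passing from $G$ to $N=G/e$ can only decrease the rank of a member, so $rs_G(\F_N)\ge rs_N(\F_N)$ and hence $rs_G(\F_G)\ge rs_N(\F_N)+1\ge |N|-|H'|+s+1=|G|-|H'|+s$. Thus the numerical bound is delivered, provided $\F_G$ is genuinely an $H$-fan family of $G$ without crossing triangles.

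Verifying that last proviso is the real content, and I expect it to be the main obstacle. Two things can fail. First, a member $X$ of $\F_N$ that is $H$-contractible or an $H$-inner fan of $N$ need not remain one in $G$: for instance, for a singleton $\{f\}$ the $3$-connectivity of $(G/f)/e=N/f$ does not by itself force $3$-connectivity of $G/f$, so one must use the specific edge supplied by the splitter theorem, or re-route the offending member. Second, since $\{e\}$ is a singleton member, any triangle $uxv$ with $ux,vx$ lying in $\bigcup\F_N$ becomes a \emph{crossing} triangle of $\F_G$. The remedy is to absorb $e$ into the member carrying such an edge, lengthening a maximal wye-to-wye fan by one spoke so that the new triangle falls inside a single member; here the weakening from freeness to the mere absence of crossing triangles is exactly what gives room to do this, since only the new length-$3$ circuit through $e$ must be contained, not every circuit through $\{u,v\}$. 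One must then check that the extended fan still satisfies (F1)--(F3), that the relevant contraction $G/F$ stays $3$-connected with an $H$-minor, and that this absorption does not cost rank-sum.

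The deletion steps $N=G\del e$ are similar but must be handled so that the rank-sum is merely \emph{preserved} (no vertex is gained): re-inserting $e$ into $G$ can only create a crossing triangle through $e$, which is again absorbed into a fan, and one checks this can be arranged without lowering $rs_G(\F_N)$. Finally, the wheel exceptions of the splitter step, where no minor-preserving single reduction of the required type exists, need a separate direct construction of the fan family, exactly as at the corresponding point in the proof of Theorem~\ref{main-general}. Packaging the contraction and deletion lifts into a single lifting lemma, and proving that lemma under the relaxed crossing-triangle hypothesis, is where essentially all the work lies.
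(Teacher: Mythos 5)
You follow the paper's architecture exactly: induction on $|E(G)|-|E(H')|$, Seymour's splitter theorem (Corollary \ref{seymour-splitter}) to pull off a single edge $x$ keeping a $3$-connected simple graph with an $H'$-minor, and a one-step lifting lemma for $G/x$ (gaining $+1$ in rank-sum) and $G\del x$ (preserving it), with wheels and $\Pi_3$ handled separately. This is precisely how Theorems \ref{main-general} and \ref{main-sum} are proved, via Lemma \ref{resolve-tudo}. However, your proposal explicitly defers the lifting lemma itself (``is where essentially all the work lies''), and that lemma is the entire mathematical content of the theorem; what remains is a plan, not a proof. Worse, the sketch you give of how the lifting would go mispredicts the obstacles in two concrete ways.

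First, your worry about a crossing triangle through the contracted edge is vacuous: the splitter step yields $G/x$ \emph{simple}, so $x$ lies in no triangle of $G$, and since $x$ belongs to no member of the lifted family, no triangle of the induced subgraph passes through it. The genuine crossing-triangle threat is different: when a member $F_i$ fails to stay $H$-contractible or $H$-inner in $G$, the repair (via Lemma \ref{w36-g}) produces a wye $\{x,\chi(i),\psi(i)\}$ and a triangle $\{\chi(i),\psi(i),y^i_1\}$ \emph{not} through $x$, and the new edges $\chi(i),\psi(i)$ may already lie in \emph{other} members of the family. Second, your assertion that absorption ``does not cost rank-sum'' is false as stated and cannot simply be ``checked'': in Lemma \ref{resolve-tudo} the members hit by repair edges (the classes $K$ and $L$ there) genuinely lose rank --- a fan colliding with some $\chi(i)$ is shrunk to the singletons of its rim (rank drops by one), and a singleton $\{\chi(i)\}$ or $\{\psi(i)\}$ is discarded outright. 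The bound $rs_G(\X)\ge rs_{G/x}(\F)+1$ is recovered only by a compensation argument: each repaired member gains exactly one rank, a uniqueness analysis yields an \emph{injection} from the damaged members into the repaired ones, and $\{x\}$ is adjoined as a new member only in the boundary case where that injection is bijective. Without this bookkeeping (and the nontrivial facts behind it, e.g.\ that at most one fan can collide, proved using the degree-$3$ endvertex of $x$), your count does not close. Similarly, in the deletion case the real issue is not triangles at all but that an inner fan of $G\del x$ may cease to be a fan of $G$ when $x$ attaches to its rim; the paper's fix is Lemma \ref{deletion-inner}, which decomposes such a fan into a smaller $H$-inner fan plus $H$-contractible rim singletons of equal total rank, and its proof (like those of Lemmas \ref{contraction-inner2} and \ref{contraction-inner} in the contraction case) is exactly the work your proposal leaves undone.
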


For establishing Theorem \ref{main2}, it is enough to combine Theorem \ref{main} with:

\begin{theorem}\label{main-r/2}
	Let $G$ be a $3$-connected simple graph with a $3$-connected simple minor $H$. Suppose that $G$ has a free  $H$-fan family with rank $r\ge 1$. Then $G$ has a forest with $\left\lceil (r+1)/2 \right\rceil$ $H$-contractible edges.
\end{theorem}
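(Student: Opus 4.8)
The plan is to treat the members of the family one at a time and then glue the pieces together. Write $\F=\{X_1,\dots,X_k\}$ for the given free $H$-fan family and $r_j:=r_G(X_j)$; since $\F$ is free we have $r=r_1+\cdots+r_k$. For each $j$ I will produce a forest $T_j$ of $\lceil(r_j+1)/2\rceil$ $H$-contractible edges of $G$, chosen so that $T:=T_1\cup\cdots\cup T_k$ is again a forest. The theorem then follows from the elementary estimate $\sum_j\lceil(r_j+1)/2\rceil\ge\lceil(r+1)/2\rceil$: since $\lceil(r_j+1)/2\rceil=\lfloor r_j/2\rfloor+1\ge(r_j+1)/2$, summing gives $|T|\ge(r+k)/2\ge(r+1)/2$, and as $|T|$ is an integer this forces $|T|\ge\lceil(r+1)/2\rceil$. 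A singleton member $X_j=\{e\}$ is immediate: $e$ is $H$-contractible by the definition of an $H$-fan family, so I put $T_j=\{e\}$ and $|T_j|=1=\lceil(1+1)/2\rceil$.

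The substance is a non-degenerate $H$-inner fan $F$ of rank $n$, coming from a maximal wye-to-wye fan with rim $v_0v_1\cdots v_{n+1}$, hub $u$ and spokes $y_i=uv_i$. Two facts drive the argument. First, for an edge $e\in E(F)$, $H$-contractibility is equivalent to ordinary contractibility: contracting the remaining edges of $F$ inside $G/e$ produces $G/F$, so $G/F$—hence $H$—is a minor of $G/e$, and it only remains to decide whether $G/e$ is $3$-connected. Second, and this is the structural heart, \emph{when $n\ge3$ every internal rim-edge $x_i=v_iv_{i+1}$ ($1\le i\le n-1$) is contractible}. Let $\bar v$ be the vertex to which $F$ collapses; then $G/F-\bar v$ equals $G-V(F)$ and is $2$-connected because $G/F$ is $3$-connected. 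Suppose $\{v_i,v_{i+1},a\}$ were a $3$-cut. The internal rim-vertices have degree $3$ with no neighbour outside $F^+$, so a side of the cut avoiding $u$ would be bordered only by $a$, making $a$ a cutvertex of $G$ — impossible. The side containing $u$ contains all of $F$ except $v_i,v_{i+1}$ and attaches to the $2$-connected graph $G-V(F)$ at two distinct vertices among $v_0$, $v_{n+1}$ and the external neighbours of $u$ (at least two such attachments exist because $\deg_{G/F}\bar v\ge3$), so deleting one further vertex $a$ cannot detach it. Hence the $n-1\ge\lceil(n+1)/2\rceil$ internal rim-edges are contractible; they lie in $E(F)$, so they are $H$-contractible, and any set of rim-edges is a forest, so I take $T_j$ to be any $\lceil(n+1)/2\rceil$ of them.

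The hard part will be the small fans. For $n=1$ the inner fan is the single spoke $y_1$, which is $H$-contractible because $G/y_1=G/F$. For $n=2$ the inner fan is the triangle $uv_1v_2$, and here the internal rim-edge need not be contractible (in the triangular prism none of the three edges of $F$ is contractible at all); what is available instead are the two terminal rim-edges $x_0,x_n$, which the same separation analysis shows to be contractible, giving the required $\lceil(2+1)/2\rceil=2$ edges. The difficulty is that $x_0,x_n\notin E(F)$, so the reduction of $H$-contractibility to $3$-connectivity no longer applies: the $H$-minor of $G/x_0$ must be recovered separately. I expect this to be the main obstacle, and to resolve it using the maximality of $F^+$ (which pins down the attachments of $v_0,v_{n+1}$ and thereby controls whether the branch set carrying $\bar v$ in a fixed $H$-model may absorb $v_0$ or $v_{n+1}$) together with the already-established small case $|G|-|H|\le2$ of Whittle and Corollary~\ref{costa-cor} of Costalonga.

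Finally, the union $T=\bigcup_jT_j$ must be a forest. For the members treated by the generic argument ($n\ge3$ and $n=1$) the selected edges lie inside their members, so freeness of $\F$—every circuit of $G[\F]$ is confined to a single member and distinct members are disjoint—immediately rules out any cycle in their union. The only edges lying outside the family are the at most two terminal rim-edges contributed by each rank-$2$ fan, and ruling out a cycle through these is the second delicate point; I would handle it by observing that such a terminal edge $v_0v_1$ meets $G[\F]$ only at the degree-$3$ vertex $v_1$, so it behaves as a pendant attachment and can close a cycle only with other terminal edges, a situation excluded by choosing, when necessary, the opposite terminal edge of the fan. The two places where the argument must be made fully rigorous are therefore the persistence of the $H$-minor across the terminal rim-edges of a rank-$2$ fan and the forest bookkeeping for those terminal edges.
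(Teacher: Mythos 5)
Your proposal correctly handles the easy members (singletons, degenerate fans, and the long fans, where your separation analysis is essentially the paper's Corollary \ref{cont-biweb}; note only that you need the hub to have degree at least $4$, which follows from $\deg_{G/F}(\bar v)\ge 3$ and Lemma \ref{G is a wheel}, and that for $i=1$ the side avoiding $u$ may contain $v_0$ and is then bordered by the \emph{two} vertices $v_1$ and $a$, a $2$-cut rather than a cutvertex --- a fixable slip). But the two points you yourself flag as unresolved are exactly the crux, and the first has a short answer you did not find: for a rank-$2$ triangle $T=\{y_1,x_1,y_2\}$ with three degree-$3$ vertices, the wye $\{x_0,y_1,x_1\}$ at $v_1$ meets $T$ in two edges, so Lemma \ref{w38-g} gives that $G/x_0$ is $3$-connected, and Lemma \ref{w37-g} gives $\si(G/x_0,y_2)\cong\si(G/y_1,y_2)$, which is a contraction of $G$ with $G/F$ (hence $H$) as a minor; so the adjacent edges are $H$-contractible with no appeal to Whittle's splitter theorem or Corollary \ref{costa-cor}. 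This is how the paper certifies the edges of its set $W$.

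The deeper problem is that your per-member accounting --- each member $X_j$ privately supplying a forest $T_j$ of $\lc(r_j+1)/2\rc$ edges, the $T_j$ pairwise compatible --- is structurally unsound, because the compensating edges of a bad triangle lie \emph{outside} the member and can coincide with other members or with other triangles' compensating edges. Already for a free family of the shape $\F=\big\{T,\{x_1\},\{x_2\}\big\}$, where $T$ is a triangle with three degree-$3$ vertices and $x_1,x_2$ are two of its three adjacent edges (by the paper's assertion \ref{r4} this shape is exactly what a family maximizing the number of singletons looks like), the rank is $4$ and your scheme demands $2+1+1=4$ pairwise disjoint edges drawn from a pool of only $3$ usable edges, since no edge of $T$ itself is contractible (as in the prism); yet the theorem's promise $\lc 5/2\rc=3$ is met by the three adjacent edges. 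So $\lc(r_j+1)/2\rc$ edges per member cannot be achieved disjointly, and no local repair of your "pendant attachment" bookkeeping will produce them. The paper's proof is organized around precisely this obstruction: it first re-chooses $\F$ to maximize the number $|\A|$ of singletons, proves via exchange arguments (assertions \ref{r1} and \ref{r4}) that the compensating edges of triangles and the terminal edges of long fans are then already members, and replaces your sum of ceilings by a single global count --- the cleaving construction yielding $2|\B|=|E(F)|-\kappa+|D|$ and hence $r\le 2|E(F)|-1$ --- in which a bad triangle is paid for collectively by $W$ rather than individually. Your outline, as it stands, has no counterpart to this maximality-plus-global-count mechanism, and both of your ``delicate points'' are genuine gaps rather than routine verifications.
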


%\begin{proposition} Let $G$ be a $3$-connected simple graph with a minor $H$. If $G$ has an $H$-fan-family with rank-sum $s$, then $G$ has at least $\left\lceil s/2 \right\rceil$ $H$-contractible edges. \end{proposition}

These results may be used to improve the bounds for the number of contractible edges in classes of graphs with fixed minors. For instance, see the next corollary, obtained from Theorems \ref{main-sum} and \ref{main-r/2} for $H:=K_1$ and $H'=K_{n,n}$.

\begin{corollary} If $G$ is a $3$-connected simple graph with an $K_{n,n}$-minor, then $G$ has a fan-family with rank sum $|G|+n^2-2n$ and $G$ has $\lc(|G|+n^2-2n+1)/2\rc$ contractible edges. Moreover $G$ has $|G|+n^2-2n$ contractible edges if $G$ is triangle-free.
\end{corollary}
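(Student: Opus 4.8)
The plan is to feed an explicit fan family of $H'=K_{n,n}$ into Theorem \ref{main-sum} and then turn the resulting rank-sum into contractible edges through Theorem \ref{main-r/2}, always taking $H\cong K_1$ (so that ``$H$-contractible'' is just ``contractible'' and the $H$-minor condition is vacuous). First I would assemble the data for $K_{n,n}$, where I assume $n\ge 3$ so that $K_{n,n}$ is a $3$-connected simple graph (for $n\le 2$ the hypothesis cannot be met, since $K_{2,2}$ is not $3$-connected). The key claim is that every edge of $K_{n,n}$ is contractible: as $K_{n,n}$ is $n$-connected it has no $3$-vertex-cut when $n\ge 4$, while for $n=3$ its only $3$-cuts are the two colour classes, neither of which contains both ends of an edge; in either case no edge lies in a $3$-cut, so each edge is contractible. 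Hence $\F_0:=\{\{e\}:e\in E(K_{n,n})\}$ is a $K_1$-fan family of $K_{n,n}$. Since $K_{n,n}$ is bipartite and therefore triangle-free, $G[\F_0]=K_{n,n}$ has no triangle at all, so $\F_0$ has no crossing triangle; and as each singleton has rank $1$ we get $rs_{K_{n,n}}(\F_0)=n^2$.

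Next I would apply Theorem \ref{main-sum} with $H\cong K_1$, $H'=K_{n,n}$ and $s=n^2$. All hypotheses hold: $K_1$ is a minor of $K_{n,n}$, $K_{n,n}$ is a minor of $G$ by assumption, the three graphs are $3$-connected and simple, and $|H|=1\ge 1$. The conclusion is an $H$-fan family $\F$ of $G$ with no crossing triangle and $rs_G(\F)\ge |G|-|K_{n,n}|+s=|G|-2n+n^2=|G|+n^2-2n$, which is precisely the first assertion of the corollary.

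For the triangle-free refinement I would argue that when $G$ has no triangle every maximal wye-to-wye fan of $G$ is a single wye, so every inner fan of $G$ is degenerated, i.e. a single edge; together with the singleton members this forces every member of $\F$ to consist of one contractible edge. Since the members are pairwise disjoint and each has rank $1$, the rank-sum equals the number of members, and we conclude that $G$ has at least $|G|+n^2-2n$ contractible edges, giving the ``moreover''.

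Finally, for the bound $\lc(|G|+n^2-2n+1)/2\rc$ I would invoke Theorem \ref{main-r/2} on the family $\F$, with the rank-sum $r:=|G|+n^2-2n$ in the role of its parameter, obtaining a forest of $\lc(r+1)/2\rc=\lc(|G|+n^2-2n+1)/2\rc$ contractible edges. The hard part will be the bookkeeping at this last step: one must run the $\lc(r+1)/2\rc$ conversion off the \emph{rank-sum} of a merely crossing-triangle-free family rather than off the rank of a free one (for free families the two coincide, which is why Theorem \ref{main-r/2} can be phrased via rank, but here $|G|+n^2-2n$ exceeds the rank $|G|-1$ of $G$ as soon as $n\ge 3$, so it is genuinely the rank-sum that is being halved). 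By contrast, the input verification for $K_{n,n}$ and the arithmetic $|G|-2n+n^2=|G|+n^2-2n$ are routine.
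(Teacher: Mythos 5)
Your derivation is exactly the one the paper intends: the paper itself gives no more proof of this corollary than the remark that it is ``obtained from Theorems \ref{main-sum} and \ref{main-r/2} for $H:=K_1$ and $H'=K_{n,n}$,'' and your input data are the right ones. Every edge of $K_{n,n}$ is indeed contractible for $n\ge 3$ (your cut argument is fine; for $n=3$ one can also just note $K_{3,3}/e$ is the wheel on five vertices), the all-singletons family has rank-sum $n^2$ and, $K_{n,n}$ being bipartite, trivially has no crossing triangles, and Theorem \ref{main-sum} then yields the rank-sum $|G|-2n+n^2$. Your triangle-free argument is also the intended one and is correct: with no triangles there are no non-degenerated fans, so every member of the output family is a single contractible edge, and pairwise disjointness makes the rank-sum a count of distinct contractible edges. (A small side remark: under the paper's nonstandard definition of $k$-connectedness, $K_{1,1}$ \emph{is} $3$-connected, so $n=1$ is admissible and just recovers Ando's bound; only $n=2$ is genuinely excluded.)

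The one substantive issue is precisely the one you flagged, and you are right that it is real rather than mere bookkeeping: Theorem \ref{main-r/2} as stated takes a \emph{free} family and its \emph{rank}, and since the rank of any edge set of $G$ is at most $|G|-1<|G|+n^2-2n$ for $n\ge3$, the middle bound cannot be obtained by a literal application of that theorem --- neither by you nor, strictly speaking, by the paper, which silently applies it to the merely crossing-triangle-free family with a rank-sum (not rank) guarantee coming out of Theorem \ref{main-sum}. So your final step, invoking Theorem \ref{main-r/2} with the rank-sum in the role of $r$, is an unproven strengthening rather than a citation, and your proposal inherits the paper's gap instead of filling it. Closing it requires re-running the proof of Theorem \ref{main-r/2} under the weaker hypothesis: the central count $r\le 2|\B|+(|A|+|R|)+|\C|$ is in fact a rank-sum computation and survives, but freeness is used materially in the exchange arguments \ref{r1} and \ref{r4} (which drive the maximality of $|\A|$ and give $\A\neq\emp$), and in the claim that $G[A\cup R]$ is a forest, which is what permits choosing the spanning forest $T$ with $A\cup R\subseteq E(T)$ and hence the chain \eqref{consuelo}--\eqref{eq-veronica}. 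None of these is automatic for a crossing-triangle-free family, so either those steps must be adapted, or the family must first be pruned to a free one with controlled rank-sum loss; your proposal correctly isolates this as the hard point but stops short of doing it.
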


All results we stated up to now  follow from Theorems \ref{main-general}, \ref{main-sum} and \ref{main-r/2}. These theorems will be proved in Section \ref{sec-proofs}.

\section{Preliminaries}
When, in a graph $G$, an edge $e$ with endvertices $u$ and $v$ is not parallel to any other edge of $G$, we say that $e=uv $ in $G$. When there is no risk of confusion, we may refer to a vertex $v$ of $G$ in a minor $H$ of $G$ as the vertex obtained from the contraction of some subgraph of $G$ containing $v$. We define the operation of vertex \defin{splitting} as the opposite of edge-contraction. We use the notation $[n]:=\{1,\dots,n\}$. We denote by $N_G(v)$ the set of neighbors of $v$ in $G$ and by $E_G(v)$ the set of edges of $G$ incident to $v$. Although some of the following lemmas are presented as corollaries to their more general versions for matroids, the reader shall have no problem to prove their graphic versions straightforwardly. 

\begin{lemma}\label{deg2}(Corollary to \cite[Proposition 8.2.7]{Oxley})
Let $G$ be a $2$-connected graph with an edge $x$ such that $G/x$ is $3$-connected but $G$ is not. Then, one of the endvertices of $x$ has exactly two neighbors in $G$.
\end{lemma}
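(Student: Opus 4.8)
The plan is to argue directly with vertex cuts rather than translating through the cited matroid statement. Write $x=uv$ and fix a $2$-vertex-cut $\{a,b\}$ witnessing that $G$ is not $3$-connected; let $V(G)\setminus\{a,b\}=C\cup D$ be a partition into two nonempty sets with no edge joining $C$ to $D$. Since $G$ is $2$-connected, neither $a$ nor $b$ can be separated from either side by a single vertex, so each of $a$ and $b$ has a neighbor in $C$ and a neighbor in $D$.

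Because there is no edge between $C$ and $D$, the edge $x$ lies entirely inside $G[C\cup\{a,b\}]$ or inside $G[D\cup\{a,b\}]$; assume the former. First I would show that $x$ is incident with a cut vertex: if both endpoints of $x$ lay in $C$, then $\{a,b\}$ would remain a $2$-cut of $G/x$ separating the (still nonempty) contracted $C$-side from $D$, contradicting that $G/x$ is $3$-connected. Next I would rule out $\{u,v\}=\{a,b\}$, since contracting such an $x$ would merge $a$ and $b$ into a single cut vertex of $G/x$, contradicting even $2$-connectedness of $G/x$. Hence, after relabeling, $u=a$ and $v\in C$.

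The crux is then to pin down the $C$-side. Contracting $x$ identifies $a$ and $v$ into one vertex $w$, and I would examine the pair $\{w,b\}$ in $G/x$, which corresponds to deleting $\{a,v,b\}$ from $G$. This deletion still separates the nonempty set $D$ from $C\setminus\{v\}$, so $\{w,b\}$ would be a $2$-cut of $G/x$ unless $C\setminus\{v\}=\emptyset$. As $G/x$ is $3$-connected, we are forced to conclude $C=\{v\}$. Finally, since $C=\{v\}$ sends no edge to $D$, every neighbor of $v$ lies in $\{a,b\}$, and $2$-connectedness (which, as noted, makes both $a$ and $b$ adjacent to the single-vertex side $C$) gives $v\sim a$ and $v\sim b$; thus $N_G(v)=\{a,b\}$ and the endvertex $v$ of $x$ has exactly two neighbors.

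The main obstacle is the case bookkeeping rather than any deep idea: one must place $x$ correctly relative to the cut, dispose of the degenerate possibility $\{u,v\}=\{a,b\}$, and above all see that the contraction reintroduces the $2$-separation $\{w,b\}$ unless the side $C$ collapses to a single vertex. A minor point to watch, given that $G$ may carry parallel edges and loops, is that the conclusion concerns the neighbor set $N_G(v)$ and not the degree of $v$, so parallel edges from $v$ to $\{a,b\}$ cause no difficulty.
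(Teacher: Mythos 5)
Your argument is correct, and it takes a genuinely different route from the paper: the paper gives no proof of this lemma at all, presenting it as a corollary of the matroid result \cite[Proposition 8.2.7]{Oxley} and explicitly leaving the graphic translation to the reader, whereas you supply a self-contained argument from vertex cuts. Each step checks out: $2$-connectedness indeed forces both cut vertices $a,b$ to have neighbors in both sides $C$ and $D$; the case of both endvertices of $x$ in $C$ and the case $\{u,v\}=\{a,b\}$ are correctly eliminated, since the $2$-cut $\{a,b\}$, respectively the cut vertex obtained by identifying $a$ with $b$, would survive into $G/x$; and the key identity $G/x\setminus\{w,b\}=G\setminus\{a,v,b\}$ shows the separation persists unless $C=\{v\}$, after which $N_G(v)=\{a,b\}$ follows from the absence of $C$--$D$ edges together with the neighbor condition on $a$ and $b$. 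Your contradictions are all of the form ``a set of at most two vertices disconnects $G/x$,'' so the argument is also compatible with the paper's convention that $k$-connectedness imposes no lower bound on $|G|$. One degenerate case you pass over silently is $x$ being a loop (the paper allows loops), but it is vacuous here: contracting a loop is the same as deleting it, so $\{a,b\}$ would remain a $2$-cut of $G/x$, contradicting the hypothesis; a one-line remark would close this. As for the trade-off: the paper's citation buys brevity and consistency with the matroid toolkit it uses throughout (Whittle's lemmas, Seymour's splitter theorem), while your proof buys independence from matroid machinery and explicit care with multigraph subtleties --- in particular you correctly aim at the neighbor set $N_G(v)$ rather than the degree of $v$, which is the right formulation when parallel edges are present.
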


\begin{corollary}\label{opposite vertex} 
Suppose that $T$ is a triangle in a $3$-connected simple graph $G$ such that $G/T$ is $3$-connected. Let $v\in V(T)$ and $y\in E(T)-E_G(v)$. Then $G/y$ is $3$-connected or $\deg_G(v)=3$.
\end{corollary}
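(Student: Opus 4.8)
The plan is to realise $G/T$ as a single-edge contraction of $G/y$ and then invoke Lemma~\ref{deg2}. Write $V(T)=\{v,b,c\}$, so that $y=bc$ is the edge of $T$ opposite $v$, while $vb,vc\in E_G(v)$. In $G/y$ the edge $y$ becomes a loop, whereas $vb$ and $vc$ become a pair of parallel edges joining $v$ to the merged vertex $w:=\{b,c\}$. Contracting either of these parallel edges, say $e'$, identifies $v$ with $w$, and the other becomes a loop; the outcome is exactly $G/T$. Hence $G/T=(G/y)/e'$, and this is the identity that lets us transfer information about $G/T$ to $G/y$.

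Now I would argue by cases on $G/y$. Since $G$ is $3$-connected, $G/y$ is $2$-connected: removing the vertex $w$ from $G/y$ amounts to deleting $b$ and $c$ from $G$, while removing any other single vertex $z$ leaves $(G-z)/y$, a contraction of the connected graph $G-z$; in both cases the result is connected. If $G/y$ happens to be $3$-connected we are in the first alternative and are done. Otherwise $G/y$ is $2$-connected but not $3$-connected, whereas $(G/y)/e'=G/T$ is $3$-connected, so Lemma~\ref{deg2} applies to $G/y$ with the edge $e'$: one of the endvertices of $e'$, namely $v$ or $w$, has exactly two neighbours in $G/y$.

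If $v$ is that endvertex, then, because $G$ is simple and $v$ is adjacent in $G$ to both $b$ and $c$, passing to $G/y$ merges exactly these two neighbours into the single neighbour $w$ and leaves all others unchanged; thus $v$ has exactly one fewer neighbour in $G/y$ than in $G$, forcing $\deg_G(v)=3$, as wanted. The remaining, and main, case is that $w$ has exactly two neighbours in $G/y$: one of them is $v$, and I call the other $z$. Then every neighbour of $b$ or of $c$ lies in $\{v,z,b,c\}$, and since $G$ is $3$-connected every vertex has degree at least $3$, which forces $N_G(b)=\{v,c,z\}$ and $N_G(c)=\{v,b,z\}$. Consequently $\{v,z\}$ separates $\{b,c\}$ from the rest of $G$, contradicting $3$-connectivity unless $V(G)=\{v,b,c,z\}$; in that last situation $G$ is a simple $3$-connected graph on four vertices, hence $G\cong K_4$, and again $\deg_G(v)=3$. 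The one delicate point is precisely this degenerate configuration, where one must distinguish a genuine $2$-separation (ruled out by $3$-connectivity) from the boundary case $G\cong K_4$.
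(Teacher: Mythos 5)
Your proof is correct and follows essentially the same route as the paper, whose entire proof is ``Use Lemma~\ref{deg2} on $G/y$ for some $x\in T-y$'': your edge $e'$ is precisely the image in $G/y$ of an edge $x\in E(T)-\{y\}$, and your case analysis (the endvertex with two neighbours being $v$, giving $\deg_G(v)=3$, versus being the merged vertex $w$, which is ruled out by a $2$-separation except in the $K_4$ boundary case) is exactly the verification the paper leaves to the reader. The only blemish is the harmless misstatement that $y$ ``becomes a loop'' in $G/y$ (it is removed by contraction), which does not affect the argument.
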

\begin{proof}
Use Lemma \ref{deg2} on $G/y$ for some $x\in T-y$.
\end{proof}

We denote by $\si(G)$ the \defin{simplification} of $G$, a graph obtained from $G$ by removing all loops and deleting all but one edges in each class of parallel edges. The \defin{cosimplification} of $G$, $\co(G)$, is defined by a graph obtained from $G$ by removing all vertices with degree less than two and, in each path of $G$ maximal in respect to having all internal vertices with degree $2$, contracting all but one edges. Note that $\co(G)$ and $\si(G)$ are uniquely determined up to choosing what labels of $G$ will remain. If the reader is familiar with matroids, it is important to note that our definition of cosimplification is slightly different from that one for matroids, since we keep pairs of non-adjacent edges in a $2$-edge cut. But these definitions are coincident when $\co(G)$ is $3$-connected, which is the case we are going to use it. 

\begin{lemma}(Corollary to \cite[Lemma 3.7]{Whittle})\label{w37-g}
Suppose that $G$ is a $3$-connected graph, $T$ is a triangle and $Y$ is a wye of $G$. If $E(T)-E(Y)=\{y\}$, then $\si(G/x_1,y)\cong\si(G/x_2,y)$ for all $x_1,x_2\in E(Y)$.
\end{lemma}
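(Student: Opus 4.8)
The plan is to reduce the statement to a short case analysis on the local structure around the two shared edges, and then to produce the required isomorphism explicitly.

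First I would pin down the configuration. Any two edges of the wye $Y$ meet at its hub, and any two edges of the triangle $T$ meet at a common vertex; since $|E(T)\i E(Y)|=2$, the two common edges must share the hub $u$ of $Y$. Thus we may write $T=\{up,uq,pq\}$ and $Y=\{up,uq,ur\}$, with $y=pq$ the edge of $T$ outside $Y$ and $ur$ the spoke of $Y$ outside $T$. Here $Y$ is a triad of $G$, so the hub satisfies $\deg_G(u)=3$ and $N_G(u)=\{p,q,r\}$; this degree condition is exactly what the argument below exploits. It suffices to prove $\si(G/x,y)\cong\si(G/T)$ for each $x\in E(Y)$, since transitivity then gives the claim for all pairs.

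For $x\in E(T)\i E(Y)=\{up,uq\}$ the claim is immediate: contracting two of the three edges of a triangle already identifies all three of its vertices, and the third edge becomes a loop that $\si$ deletes. Hence $G/(up),y$ and $G/(uq),y$ are both equal, after removing the resulting loop, to $G/T$, giving $\si(G/(up),y)=\si(G/(uq),y)=\si(G/T)$.

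The substantive case is $x=ur$. Here $G/T$ has a vertex $z$ obtained by identifying $\{u,p,q\}$, while $G/(ur),(pq)$ has vertices $u'$ and $w$ obtained by identifying $\{u,r\}$ and $\{p,q\}$ respectively, all remaining vertices being common to both graphs. I would take $\phi$ to be the identity off $\{z,r\}$ together with $z\mapsto w$ and $r\mapsto u'$, and verify it is an isomorphism of the two simplifications. The check is pure neighbourhood bookkeeping: in $G/T$ the vertex $z$ inherits the neighbours of $p$ and $q$ outside $\{u,p,q\}$ together with $r$, while $r$ keeps its own neighbours plus $z$; in $G/(ur),(pq)$ the vertex $w$ inherits the neighbours of $p,q$ outside $\{u,p,q\}$ together with $u'$, while $u'$ inherits the neighbours of $r$ together with $w$. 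These match under $\phi$ precisely because $\deg_G(u)=3$ forces $u$ to contribute no neighbour outside $\{p,q,r\}$; if the hub had larger degree its extra neighbours would attach to $z$ in one graph but to $u'$ in the other, and the two simplifications could genuinely differ. The main (and essentially only) obstacle is organising this neighbourhood comparison and isolating the role of the triad hypothesis within it. The same conclusion can alternatively be read off from Whittle's Lemma 3.7 applied to the cycle matroid $M(G)$.
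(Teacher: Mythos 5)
Your proof is correct, and it takes a genuinely different route in the sense that the paper contains no graph-theoretic proof at all: Lemma \ref{w37-g} is stated as a corollary of Whittle's matroid lemma \cite[Lemma 3.7]{Whittle} on a triangle meeting a triad, with only the blanket remark in the preliminaries that the graphic versions are straightforward. Your argument supplies exactly that direct verification. Pinning the configuration down as $T=\{up,uq,pq\}$, $Y=\{up,uq,ur\}$ with $y=pq$ is right (two edges of $Y$ meet only at the hub, so the shared vertex of the two common edges must be $u$); the reduction by transitivity to comparing each $\si(G/x,y)$ with $\si(G/T)$ is sound; for $x\in\{up,uq\}$ the third triangle edge indeed becomes a loop killed by $\si$; and for $x=ur$ your map $z\mapsto w$, $r\mapsto u'$, identity elsewhere, works because $N_{G/T}(z)=\big((N_G(p)\cup N_G(q))-\{u,p,q\}\big)\cup\{r\}$ matches $N(w)$ precisely when $N_G(u)=\{p,q,r\}$. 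What the explicit isomorphism buys is a self-contained multigraph proof needing no matroid background; what the paper's citation buys is brevity and uniformity with the other Whittle-derived lemmas (\ref{w38-g}, \ref{w36-g}).

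One point deserves emphasis: you assert that ``$Y$ is a triad of $G$, so $\deg_G(u)=3$'' as if it were part of the given data, but under the paper's literal definition of a wye (any subgraph isomorphic to the star with three edges) this does not follow, and the lemma is actually false without it. For instance, take the $3$-connected simple graph $G$ on vertex set $\{u,p,q,r,s,t\}$ with edge set $\{up,uq,pq,ur,us,qr,rs,rt,st,pt,qt\}$, with $T=G[\{up,uq,pq\}]$ and $Y=G[\{up,uq,ur\}]$, so that $\deg_G(u)=4$; then $\si(G/up,pq)\cong K_4$ while $\si(G/ur,pq)$ is $K_4$ with one edge removed. So the degree-$3$ hypothesis you inserted is essential, not cosmetic. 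It is clearly the intended reading: the wye must be a triad of the cycle matroid for Whittle's lemma to apply, and in every application in the paper the wye arises as the full edge-star $E_G(v)$ of a degree-$3$ vertex (e.g.\ those produced by Lemma \ref{w36-g}). Since you isolated exactly where the hypothesis enters --- the hub contributing no neighbour outside $\{p,q,r\}$ --- your proof is right where it matters, but the triad condition should be stated as an explicit assumption on $Y$ rather than presented as a consequence of the paper's definition, which does not contain it.
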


\begin{lemma}(Corollary to \cite[Lemma 3.8]{Whittle})\label{w38-g}
Suppose that $G$ is a $3$-connected graph, $T$ is a triangle and $Y$ is a wye of $G$. If $E(T)-E(Y)=\{y\}$ and $E(Y)-E(T)=\{x\}$ then $G/x$ and $\co(G\del y)$ are $3$-connected.
\end{lemma}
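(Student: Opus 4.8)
The plan is to read off the geometry forced by the hypotheses and then treat the two conclusions separately. Since $Y$ is a wye, all three of its edges meet at a common hub $u$; the two edges of $E(T)\i E(Y)$ therefore meet at $u$, so $u\in V(T)$ and the remaining edge $y$ of $T$ joins the other two vertices of $T$. Writing $E(Y)=\{ua,ub,uc\}$, this gives $T=uab$, $y=ab$ and $x=uc$. Throughout I use that $Y$ is (the graphic image of Whittle's) triad, i.e.\ that $E(Y)$ is a cocircuit, equivalently $\deg_G(u)=3$ and $N_G(u)=\{a,b,c\}$; this hypothesis is genuinely needed, since for a wye whose hub has larger degree (take $W_4$ with $u$ the hub) already $G/x$ fails to be $3$-connected. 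The quickest formal route is to pass to the cycle matroid $M(G)$, in which $T$ is a triangle and $Y$ a triad, and to quote \cite[Lemma 3.8]{Whittle} together with $M(G/x)=M(G)/x$ and $M(\co(G\del y))=\co(M(G)\del y)$. Below I instead sketch the self-contained graph argument.

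For $G/x$, with $x=uc$: contract $x$ to a vertex $w$ and observe that a separating set of size at most $2$ of $G/x$ avoiding $w$ lifts to one of the same size in $G$, while $\{w\}$ lifts to $\{u,c\}$; hence $G/x$ is $3$-connected if and only if $G-\{u,c,p\}$ is connected for every $p\in V(G)\del\{u,c\}$. Fix such a $p$ and set $H:=G-\{c,p\}$, which is connected because $G$ is $3$-connected. If $p\notin\{a,b\}$ then $N_H(u)\cont\{a,b\}$, and since $ab\in E(G)$ this is a clique, so $u$ is simplicial in $H$ and $H-u=G-\{u,c,p\}$ is connected; if $p\in\{a,b\}$ then $u$ has a single neighbour in $H$, hence is a leaf, and again $H-u$ is connected. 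This yields the first conclusion.

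For $\co(G\del y)$, put $G':=G\del ab$. First, $G'$ is $2$-connected (deleting an edge of a $3$-connected graph leaves no cut vertex), and hence so is $\co(G')$. Next, every $2$-cut of $G'$ contains $u$: if $\{p,q\}$ is such a cut then $G-\{p,q\}$ is connected while $(G-\{p,q\})\del ab$ is not, so $ab$ is a bridge there; thus $a,b\notin\{p,q\}$ lie in distinct components $A\ni a$, $B\ni b$ of $G'-\{p,q\}$, $ab$ is the only $A$--$B$ edge of $G$, and (as $u$ is adjacent to both $a$ and $b$) we must have $u\in\{p,q\}$, say $\{p,q\}=\{u,q\}$. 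The third neighbour $c$ of $u$ lies in $A$, in $B$, or equals $q$. If $c\in A$ (the case $c\in B$ is symmetric under $a\leftrightarrow b$), then both $u$ and $a$ meet $B$ only at $b$, so in the $2$-connected graph $G-q$ the vertex $b$ would separate $B\del\{b\}$ from the rest unless $B=\{b\}$; hence $B=\{b\}$, and in $G'$ the vertex $b$ has degree $2$ with neighbours $u,q$, so cosimplification suppresses it and destroys the cut. If $c=q$, the same argument applied to both sides forces $A=\{a\}$ and $B=\{b\}$, so $G=K_4$ and $\co(G')$ is the (trivially $3$-connected) graph with three parallel $u$--$c$ edges. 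In every case each $2$-cut of $G'$ has a side consisting of a single suppressed vertex, so no $2$-cut survives in $\co(G')$; together with its $2$-connectedness this shows $\co(G\del y)$ is $3$-connected.

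The two connectivity lifts are routine; the one delicate point is the last paragraph, where I must show that the $2$-cuts of $G\del y$ are exactly the ones removed by cosimplification. The leverage is that deleting the single edge $ab$ can only lower the degrees of $a$ and $b$, so $a,b$ are the only candidates for suppression, and the $3$-connectivity of $G$ (via the ``no cut vertex in $G-q$'' argument) pins the offending side down to precisely one of them. This is also the step that most directly mirrors the cocircuit bookkeeping in Whittle's proof.
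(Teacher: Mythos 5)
Your proof is correct, and it takes a genuinely different route from the paper, which gives no graphic argument at all: Lemma \ref{w38-g} is stated as a corollary of Whittle's matroid Lemma 3.8 (a triangle meeting a triad in a $3$-connected matroid), with the blanket remark in the preliminaries that readers can straightforwardly derive the graphic versions. You acknowledge that citation route and then actually carry out the self-contained graph proof, and both halves check out: the reduction of $3$-connectivity of $G/x$ to connectivity of $G-\{u,c,p\}$ and the simplicial/leaf case split are sound, and the analysis showing every $2$-cut of $G':=G\del y$ has the form $\{u,q\}$ with a singleton side $\{a\}$ or $\{b\}$ that cosimplification suppresses is exactly the right structure (your closing ``no $2$-cut survives'' does need the routine companion remark that suppressing a degree-$2$ vertex creates no new $2$-cut, since a $2$-separation of $\co(G')$ lifts back to $G'$ along the subdivided paths, but your ``connectivity lifts are routine'' covers this). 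The most valuable thing your write-up adds is making explicit a hypothesis the paper leaves implicit: the paper defines a wye as any subgraph isomorphic to $K_{1,3}$, and under that literal reading the lemma is false, as your $\mathcal{W}_4$ example shows ($\si(G/x)$ is $K_4$ minus an edge); your reading, that $E(Y)=E_G(u)$ with $\deg_G(u)=3$, i.e., a triad of the cycle matroid, is the one forced both by Whittle's lemma and by the paper's own usage --- for instance the dichotomy in Corollary \ref{cont-biweb} would be vacuous otherwise. One caveat in the same spirit cuts against you: in the case $c\in A$ you pass from $B=\{b\}$ to ``$b$ has degree $2$ in $G'$,'' which silently assumes $G$ simple; since the paper's graphs may carry parallel edges, this is a real restriction --- on $V=\{u,a,b,c,q\}$ with edges $ua,ub,uc,ab,ac,aq,cq$ and a doubled edge $bq$, all hypotheses of the lemma hold under the paper's vertex-cut definition of $3$-connectivity, yet $b$ has degree $3$ in $G\del ab$, nothing is suppressed, and $\{u,q\}$ remains a $2$-cut of $\co(G\del ab)$, so the deletion half genuinely fails for multigraphs. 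Simplicity is automatic in Whittle's matroid setting (a $3$-connected matroid has no $2$-circuits) and holds in every application the paper makes, so your proof is correct in the intended scope; but you should state that hypothesis alongside the degree-$3$ hub rather than only the latter. What each approach buys: the paper's citation is shorter and situates the lemma in its matroid source, while your argument makes the paper self-contained, works directly with its nonstandard connectivity convention (no requirement $|G|>k$, so the boundary case $G=K_4$ with $\co(G\del y)$ a triple edge is handled honestly), and exposes precisely which hypotheses the graphic translation needs.
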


From Lemma \ref{w38-g} we have the following corollaries:

\begin{corollary}\label{cont-biweb}
Suppose that $x_0,y_1,x_1,y_2,x_2$ is a fan ordering of a wye-to-wye fan in a $3$-connected graph $G$. Then $G/x_1$ is $3$ connected or $G$ has a wye containing $y_1$ and $y_2$.
\end{corollary}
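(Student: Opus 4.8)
The plan is to read off the local structure of the fan and then feed a suitable triangle--wye pair into Lemma \ref{w38-g}. Writing the fan ordering as $x_0=v_0v_1$, $y_1=uv_1$, $x_1=v_1v_2$, $y_2=uv_2$, $x_2=v_2v_3$ (so that $u$ is the hub and $v_0,\dots,v_3$ the rim, pairwise distinct except for a possible $v_0=v_3$), the middle triple $T:=\{y_1,x_1,y_2\}$ is the triangle on $\{u,v_1,v_2\}$, while $Y_1:=\{x_0,y_1,x_1\}$ and $Y_2:=\{x_1,y_2,x_2\}$ are the wyes centred at $v_1$ and $v_2$. The key observation is that $x_1$ is the unique edge lying in $Y_1$ (and in $Y_2$) that I want to contract, so to invoke Lemma \ref{w38-g} I must produce a triangle meeting $Y_1$ in exactly the two edges $x_0,y_1$; such a triangle lives at the end of the fan rather than at its centre.

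First I would dispose of the case in which the hub is adjacent to a rim end. If $uv_0\in E(G)$, then $T_0:=\{x_0,y_1,uv_0\}$ is a genuine triangle on the distinct vertices $u,v_0,v_1$, and it satisfies $E(Y_1)\setminus E(T_0)=\{x_1\}$ and $E(T_0)\setminus E(Y_1)=\{uv_0\}$. Applying Lemma \ref{w38-g} to the wye $Y_1$ and the triangle $T_0$ (with $x=x_1$, $y=uv_0$) then yields that $G/x_1$ is $3$-connected. Symmetrically, if $uv_3\in E(G)$, I pair the triangle $\{x_2,y_2,uv_3\}$ with the wye $Y_2$ to reach the same conclusion.

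In the remaining case $u$ is adjacent to neither $v_0$ nor $v_3$. Since $G$ is $3$-connected and has at least four vertices, the hub cannot have only the two neighbours $v_1,v_2$ (else deleting them would isolate it), so $u$ has a neighbour $w\notin\{v_1,v_2\}$; by the case hypothesis $w\neq v_0,v_3$ as well. Then $y_1=uv_1$, $y_2=uv_2$ and $uw$ have pairwise distinct endpoints and so form a wye containing $y_1$ and $y_2$, which is the second alternative of the statement.

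The points needing care are the bookkeeping ones: checking that $T_0$ and $\{x_2,y_2,uv_3\}$ are honest triangles (their vertices are distinct because $u,v_0,v_1,v_2,v_3$ are distinct apart from a possible $v_0=v_3$, and the edge $uv_0$, resp.\ $uv_3$, is not a fan edge), and verifying in each application of Lemma \ref{w38-g} that the wye and triangle share exactly two edges so that the hypotheses $E(T)\setminus E(Y)=\{y\}$ and $E(Y)\setminus E(T)=\{x\}$ hold with $x=x_1$. I expect the genuine obstacle to be spotting the right triangle: Lemma \ref{w38-g} never produces the contractibility of the middle rim edge $x_1$ from the central triangle $T$, and $x_1$ only becomes contractible once one uses a triangle sitting at an end of the fan, namely one created by an edge from the hub to $v_0$ or to $v_3$; the absence of both such edges is exactly what forces the hub's third neighbour to supply the wye.
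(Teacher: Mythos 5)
Your Cases A and B are fine as far as they go, but there is a genuine gap, and it sits exactly where you located the ``remaining case''. The dichotomy in Corollary \ref{cont-biweb} is only meaningful if ``wye'' is read the way the paper actually uses it: these lemmas are graphic translations of Whittle's results about \emph{triads} ($3$-element cocircuits), so the wye must be the full star of a degree-$3$ vertex. Under your literal reading --- any $K_{1,3}$ subgraph --- the second alternative holds \emph{always}, since in a $3$-connected graph on at least $4$ vertices the hub has a third neighbour, exactly as you yourself observe; the corollary would then be vacuous, and it could not be used the way the paper uses it (in Lemmas \ref{deletion-inner}, \ref{contraction-inner2} and \ref{deg3 hub} the hypothesis $\deg(u)\ge 4$ is invoked precisely to \emph{rule out} the wye alternative and conclude that rim edges are contractible). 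Worse, under the literal reading Lemma \ref{w38-g} itself is false (take a wye centred at a vertex of degree at least $4$, two of whose edges lie in a triangle, with a $3$-cut through both endvertices of the third wye edge), so the literal reading also undercuts your Cases A and B; those applications are sound only because the fan axioms make $\{x_0,y_1,x_1\}$ and $\{x_1,y_2,x_2\}$ stars of degree-$3$ vertices $v_1,v_2$. With the operative reading, your Case C collapses: a third neighbour $w$ of $u$ yields a wye containing $y_1$ and $y_2$ only when $\deg_G(u)=3$, so in the main case $uv_0,uv_3\notin E(G)$ and $\deg_G(u)\ge 4$ your argument proves nothing --- even though $G/x_1$ is in fact $3$-connected there, so the case cannot be conceded.

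The missing idea is the half of Lemma \ref{w38-g} you never used, and your closing remark that the central triangle can never yield contractibility of $x_1$ is exactly backwards. Apply Lemma \ref{w38-g} to the triangle $T=\{y_1,x_1,y_2\}$ and the wye $Y=\{x_0,y_1,x_1\}$ at $v_1$, so that $y=y_2$ and $x=x_0$: besides $G/x_0$, the lemma asserts that $\co(G\del y_2)$ is $3$-connected. If $G$ has no wye containing $y_1$ and $y_2$, then $\deg_G(u)\ge 4$, so $v_2$ is the unique degree-$2$ vertex of $G\del y_2$, and $\co(G\del y_2)$ is just $G\del y_2$ with the path $v_1v_2v_3$ smoothed; that graph is $G/x_1$ with the edge parallel to $y_1$ (the image of $y_2$) deleted. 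Since adding a parallel edge cannot destroy vertex $3$-connectivity, $G/x_1$ is $3$-connected. This single application of the central triangle covers all cases at once --- your end-of-fan triangles handle only the special situation $uv_0\in E(G)$ or $uv_3\in E(G)$ --- and is presumably the one-line derivation the paper intends when it states the corollary with no proof beyond ``from Lemma \ref{w38-g}''.
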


\begin{corollary}\label{cont-triweb}
If $G$ is a $3$-connected graph with a triangle $T$ containing $3$ degree-$3$ vertices, then $G/T$ is $3$-connected. Moreover if $G$ is simple and $G\ncong K_4$, then $G/T$ is simple.
\end{corollary}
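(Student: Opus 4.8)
The plan is to realise $G/T$ as the cosimplification of a single-edge deletion and then invoke Lemma~\ref{w38-g} directly. Write $T=abc$ and, since $\deg_G a=\deg_G b=\deg_G c=3$, let $f_a,f_b,f_c$ be the unique edges joining $a,b,c$ to the rest of $G$, with other endvertices $a',b',c'$ respectively. First I would observe that the three edges incident with $a$, namely $e_{ab},e_{ca}$ and $f_a$, form a wye $Y$ of $G$: its leaves $b,c,a'$ are distinct, for otherwise $a$ would have at most two distinct neighbours and deleting them would disconnect $a$ from the rest, contradicting $3$-connectivity (note $|G|\ge 4$, since $a$ has a neighbour outside $T$). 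Taking this $Y$ together with $T$ we have $E(T)-E(Y)=\{e_{bc}\}$ and $E(Y)-E(T)=\{f_a\}$, so Lemma~\ref{w38-g} applies with $y=e_{bc}$ and $x=f_a$ and yields that $\co(G\del e_{bc})$ is $3$-connected.

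The key step is then to check that $\co(G\del e_{bc})\cong G/T$, and this is exactly where the degree hypothesis is used. In $G\del e_{bc}$ the vertices $b$ and $c$ have degree $2$, while every other vertex keeps degree at least $3$ (deleting $e_{bc}$ touches no other vertex, and $3$-connectivity forces minimum degree at least $3$). Hence the cosimplification merely suppresses $b$ and $c$: the maximal paths with all internal vertices of degree $2$ are $b'\,b\,a$ and $c'\,c\,a$ (the vertex $a$ has degree $3$ and so breaks any longer path), and contracting all but one edge of each replaces them by edges $ab'$ and $ac'$. The resulting graph is $(G-\{a,b,c\})$ together with the vertex $a$ joined to $a',b',c'$, which is precisely $G/T$ with $a$ playing the role of the contracted vertex. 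Therefore $G/T$ is $3$-connected.

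For the \emph{moreover} part I would argue that when $G$ is simple and $G\ncong K_4$ the vertices $a',b',c'$ are pairwise distinct. Since a simple $3$-connected graph on four vertices is $K_4$, the hypothesis gives $|G|\ge 5$; and if, say, $a'=b'=:w$, then removing $\{c,w\}$ would leave $a$ and $b$ adjacent only to each other and disconnected from the nonempty remainder, contradicting $3$-connectivity. With $a',b',c'$ distinct, $G/T$ has no parallel edges (a parallel pair at the contracted vertex would require two external edges to a common vertex, and any other parallel pair would already be present in the simple graph $G$) and no loops (the only edges inside $T$ are the three triangle edges, which become loops and are discarded). Hence $G/T$ is simple. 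The only genuine obstacle is the verification that $\co(G\del e_{bc})$ really equals $G/T$ rather than some further reduction: this could fail if one of $a',b',c'$ had degree $2$ or if two of them coincided, but the former is excluded by minimum degree at least $3$, and the latter is harmless for the first assertion and excluded above for the second.
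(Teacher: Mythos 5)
Your proof is correct and follows the paper's intended route: the paper states this corollary immediately after Lemma \ref{w38-g} with no separate argument, the whole content being precisely your reduction --- build the wye at one vertex of $T$, apply Lemma \ref{w38-g} with $y=e_{bc}$, $x=f_a$, and observe that the degree-$3$ hypothesis makes $b$ and $c$ the only degree-$2$ vertices of $G\del e_{bc}$, so that $\co(G\del e_{bc})\cong G/T$. One small repair: your parenthetical justification of $|G|\ge 4$ (``since $a$ has a neighbour outside $T$'') is circular at that point, as it presupposes $a'\notin\{b,c\}$; instead note that $|G|=3$ is impossible because three degree-$3$ vertices would give odd degree sum $9$, and, for the cosimplification step in the non-simple case, the same distinctness argument should be invoked symmetrically at $b$ and $c$ so that the suppressed paths $b'\,b\,a$ and $c'\,c\,a$ genuinely end at vertices of degree at least $3$.
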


\begin{corollary}\label{cont-fan}
Suppose that $x_0,y_1,x_1,\dots,y_n,x_n$ is a fan ordering of a wye-to-wye fan of a $3$-connected simple graph $G$ with $n\ge 3$. If $1\le i \le n-1$, then $G/x_i\del y_i$ is $3$-connected and simple and has $x_0,y_1,x_1,\dots, y_{i-1},x_{i-1},y_{i+1},x_{i+1},\dots,x_n$ as the fan ordering of a wye-to-wye fan.
\end{corollary}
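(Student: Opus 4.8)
The plan is to track the effect of the operation $G\mapsto G/x_i\del y_i$ on the fan explicitly, deducing $3$-connectedness from Lemma \ref{w38-g} and reading off the new fan by inspection. Write $u$ for the hub and $v_0,v_1,\dots,v_{n+1}$ for the rim, so that $x_j=v_jv_{j+1}$ and $y_j=uv_j$. Since each triple $\{x_{j-1},y_j,x_j\}$ is a wye, every rim vertex $v_1,\dots,v_n$ has degree exactly $3$ in $G$, with neighbours $u,v_{j-1},v_{j+1}$. Contracting $x_i=v_iv_{i+1}$ identifies $v_i$ and $v_{i+1}$ into a single vertex $w$; the spokes $y_i$ and $y_{i+1}$ then become a parallel pair joining $u$ and $w$, while $x_{i-1}$ and $x_{i+1}$ become $v_{i-1}w$ and $wv_{i+2}$. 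Deleting $y_i$ removes one edge of this parallel pair.

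For $3$-connectedness I would apply Lemma \ref{w38-g}. Take the wye $Y:=\{x_{i-1},y_i,x_i\}$ at $v_i$ together with the triangle $T:=\{y_{i-1},x_{i-1},y_i\}$ on $\{u,v_{i-1},v_i\}$; then $E(T)-E(Y)=\{y_{i-1}\}$ and $E(Y)-E(T)=\{x_i\}$, so Lemma \ref{w38-g} yields that $G/x_i$ is $3$-connected. This choice requires $i\ge 2$; for $i=1$ I would instead use the wye $Y:=\{x_i,y_{i+1},x_{i+1}\}$ at $v_{i+1}$ and the triangle $T:=\{y_{i+1},x_{i+1},y_{i+2}\}$ on $\{u,v_{i+1},v_{i+2}\}$, which needs only $i\le n-2$. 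Because $n\ge 3$, the two cases together cover every $i$ with $1\le i\le n-1$. Since $y_i$ and $y_{i+1}$ are parallel in $G/x_i$, deleting $y_i$ keeps the graph $3$-connected: removing one edge of a parallel pair cannot destroy $3$-connectedness, as its twin $y_{i+1}$ still joins $u$ and $w$.

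Simplicity is where the degree condition does the work. As $G$ is simple, $x_i$ is the only edge joining $v_i$ and $v_{i+1}$, so $G/x_i$ has no loop. Any parallel class of $G/x_i$ would come from a common neighbour of $v_i$ and $v_{i+1}$; but $N_G(v_i)=\{u,v_{i-1},v_{i+1}\}$ and $N_G(v_{i+1})=\{u,v_i,v_{i+2}\}$, and $v_{i-1}\ne v_{i+2}$ because $n\ge 3$, so $u$ is their only common neighbour. Hence the sole parallel class of $G/x_i$ is $\{y_i,y_{i+1}\}$, and deleting $y_i$ makes $G/x_i\del y_i$ simple.

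Finally I would read off the new fan. After the operation the rim becomes $v_0,\dots,v_{i-1},w,v_{i+2},\dots,v_{n+1}$, the surviving spokes are $y_1,\dots,y_{i-1},y_{i+1},\dots,y_n$ with $y_{i+1}$ now equal to $uw$, and $w$ has degree exactly $3$ with incident edges $x_{i-1},y_{i+1},x_{i+1}$. Checking consecutive triples, $\{x_{i-1},y_{i+1},x_{i+1}\}$ is the wye at $w$, while $\{y_{i-1},x_{i-1},y_{i+1}\}$ and $\{y_{i+1},x_{i+1},y_{i+2}\}$ are the triangles $uv_{i-1}w$ and $uwv_{i+2}$, all remaining triples being unchanged; this is exactly the asserted ordering $x_0,y_1,\dots,y_{i-1},x_{i-1},y_{i+1},x_{i+1},\dots,x_n$. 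The main point requiring care is the bookkeeping at the two ends $i=1$ and $i=n-1$, where one flanking triangle is absent and the pair $T,Y$ feeding Lemma \ref{w38-g} must be chosen on the correct side; the hypothesis $n\ge 3$ is precisely what guarantees that a valid choice always exists.
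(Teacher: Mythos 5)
Your proof is correct and follows essentially the same route as the paper, which states Corollary \ref{cont-fan} without further argument as an immediate consequence of Lemma \ref{w38-g}: you invoke exactly that lemma for $3$-connectedness and verify simplicity and the new fan ordering by direct inspection, correctly isolating where $n\ge 3$ is needed (the choice of flanking triangle and $v_{i-1}\neq v_{i+2}$). The only cosmetic difference is that you use the $G/x$ half of Lemma \ref{w38-g} and then delete one edge of the parallel pair $\{y_i,y_{i+1}\}$, whereas taking $T=\{y_i,x_i,y_{i+1}\}$ and $Y=\{x_i,y_{i+1},x_{i+1}\}$ lets one read $G/x_i\del y_i=\co(G\del y_i)$ directly off the other half, valid for all $1\le i\le n-1$ in one case.
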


\begin{lemma}\label{w36-g}
Suppose that $G$ is a simple $3$-connected graph and that $x$ and $y$ are edges of $G$ such that $G/x,y$ is $3$-connected but $G/y$ is not. Then $|G|\ge 5$ and $G$ has a wye $Y$ and a triangle $T$ such that $E(T)-E(Y)=\{y\}$ and $x\in E(Y)$.
\end{lemma}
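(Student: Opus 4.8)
The plan is to reduce the statement to Lemma \ref{deg2} applied to the graph $G/y$. First I would check that $G/y$ is $2$-connected: contracting an edge of a $3$-connected graph cannot create a cut vertex, since deleting the identified vertex from $G/y$ amounts to deleting both endvertices of $y$ from $G$, while deleting any other single vertex $z$ leaves $(G-z)/y$, which is connected because $G-z$ is. Thus $G/y$ is $2$-connected, $(G/y)/x=G/x,y$ is $3$-connected, and $G/y$ is not $3$-connected, so Lemma \ref{deg2} applies to $G/y$ and the edge $x$ (note that $x$ is a genuine edge of $G/y$, since $x\neq y$ forbids it from becoming a loop). Hence some endvertex of $x$ has exactly two neighbors in $G/y$.

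Write $y=uv$ and let $w$ be the vertex of $G/y$ obtained by identifying $u$ and $v$. The crucial observation is that a vertex $a\notin\{u,v\}$ loses a neighbor under the contraction precisely when $a$ is adjacent to both $u$ and $v$. Since $G$ is simple and $3$-connected, $\deg_G(a)\ge 3$, so if such an $a$ is the low-degree endvertex of $x$ (with exactly two neighbors in $G/y$), then the count must drop, forcing $N_G(a)=\{u,v,c\}$ for a single further vertex $c$. In that case $\{au,av,uv\}$ is a triangle $T$ through $y$ and $E_G(a)=\{au,av,ac\}$ is a wye $Y$; one checks directly that $E(T)-E(Y)=\{y\}$, and since $a$ is an endvertex of $x$ we have $x\in E_G(a)=E(Y)$. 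This settles every case in which the low-degree endvertex of $x$ is an ordinary vertex of $G$, whether or not $x$ meets $y$.

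The remaining case is that the low-degree endvertex is the identified vertex $w$, so $x$ is incident in $G$ to $u$ or $v$, say $x=up$. Then the two neighbors of $w$ in $G/y$ are the elements of $(N_G(u)\cup N_G(v))-\{u,v\}$; as $u$ and $v$ each have at least two neighbors besides one another, this forces $N_G(u)=\{v,p,q\}$ and $N_G(v)=\{u,p,q\}$. Now $T:=\{uv,up,vp\}$ is a triangle through $y$, and, choosing a third edge $pr$ at $p$ (which exists since $\deg_G(p)\ge 3$), the star $Y:=\{pu,pv,pr\}$ is a wye with $E(T)-E(Y)=\{y\}$ and $x=up\in E(Y)$, as required.

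Finally, for $|G|\ge 5$: every case produced a vertex of degree $\ge 3$, so $|G|\ge 4$; and $|G|=4$ would force $G\cong K_4$, the unique simple $3$-connected graph on four vertices, whose every edge-contraction is $3$-connected, contradicting that $G/y$ is not. Hence $|G|\ge 5$. The main obstacle is the bookkeeping in the identified-vertex case, where the wye is not centred at an endvertex of $x$ but at the third vertex $p$ of the triangle; there one must confirm both that $G/y$ is $2$-connected (to license Lemma \ref{deg2}) and that the degree equalities $\deg_G(u)=\deg_G(v)=3$ genuinely follow from $w$ having only two neighbors.
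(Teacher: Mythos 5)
Your proof is correct and follows essentially the same route as the paper's: both apply Lemma \ref{deg2} to $G/y$ and split on whether the low-degree endvertex of $x$ is the vertex created by contracting $y$. The only cosmetic difference is that in the identified-vertex case you construct the required triangle and wye directly (centred at the third vertex $p$), whereas the paper dispatches that case by a contradiction with $3$-connectivity; your neighbor-count bookkeeping and the $K_4$ argument for $|G|\ge 5$ are both sound.
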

\begin{proof}
Suppose the contrary. If $|G|\le 4$, then it is clear that $G/y$ is $3$-connected. Thus $|G|\ge 5$. By Lemma \ref{deg2} on $G/y$, it follows that $x$ is incident to a vertex $u$ with exactly two neighbors $v$ and $w$ in $G/y$. Since there are no degree-$2$ vertices in $G$ and neither in $G/y$, then $u$ is incident to at least one pair $P$ of parallel edges of $G/y$. Since $G$ is simple, $G$ is obtained from $G/y$ by splitting one of the vertices incident to $P$. If $y$ is obtained by splitting $u$, then $G\del \{v,w\}$ is disconnected, a contradiction. So, $G$ is obtained by splitting one of $v$ or $w$. As $G$ is simple, $P\u y$ is the edge set of a triangle $T$ of $G$ and $Y:=G[E_G(u)]$ is a wye of $G$ meeting $P$ and containing $x$ but not $y$. This proves the lemma.
\end{proof}

\begin{lemma}\label{expansion}
Suppose that $G$ is a $3$-connected graph with $|G| \ge 4$, $e$ is an edge of $G$ other than a loop and $v$ is a vertex of $G$ not incident to $e$. Let $G'$ be the graph constructed from $G$ by putting a vertex $u$ in the middle of $e$ and adding an edge $f$ linking $u$ and $v$. Then $G'$ is $3$-connected.
\end{lemma}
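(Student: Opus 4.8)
The plan is to verify the defining property of $3$-connectivity directly: for every $S\cont V(G')$ with $|S|\le 2$, the graph $G'\del S$ is connected. Write $a,b$ for the endvertices of $e$; since $e$ is not a loop, $a\ne b$, and by hypothesis $v\notin\{a,b\}$, so in $G'$ the new vertex $u$ is adjacent to exactly the three distinct vertices $a,b,v$. Before the case analysis I would record two standard facts about $G$. Since $G$ is $3$-connected with $|G|\ge 4$ it is bridgeless, so $G\del e$ is connected. And for any single vertex $c$ the graph $G\del c$ is $2$-connected, because deleting $c$ together with any one further vertex removes at most two vertices from the $3$-connected graph $G$; as $|G\del c|\ge 3$, such a $2$-connected graph is bridgeless, so $G\del c\del e$ is connected whenever $e$ survives.

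Next I would split into two cases according to whether $u\in S$. Suppose first $u\notin S$, so $S\cont V(G)$ and $G\del S$ is connected because $G$ is $3$-connected and $|S|\le 2$. If both endpoints $a,b$ of $e$ avoid $S$, then $G'\del S$ is just $G\del S$ with the edge $e$ subdivided by $u$ (together with the extra edge $f$ when $v\notin S$), and subdivision preserves connectivity. Otherwise some endvertex of $e$ lies in $S$; since $|S|\le 2<3$, at least one of $a,b,v$ still lies outside $S$, and $G'\del S$ is then obtained from the connected graph $G\del S$ by adjoining $u$ at such a surviving neighbor. In either subcase $G'\del S$ is connected.

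Now suppose $u\in S$, so $S=\{u\}$ or $S=\{u,c\}$ with $c\in V(G)$. Deleting $u$ from $G'$ removes the two edges of the subdivided path and the edge $f$, hence $G'\del u=G\del e$, which is connected by the first preliminary fact. If a further vertex $c$ is deleted, then $G'\del\{u,c\}=(G\del e)\del c$: when $c\in\{a,b\}$ this equals $G\del c$, connected since $G\del c$ is even $2$-connected; and when $c\notin\{a,b\}$ (in particular for $c=v$) it equals $(G\del c)\del e$, connected by the second preliminary fact. This exhausts all cases and shows $G'$ is $3$-connected.

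The only delicate point is the last subcase, where a vertex and then an edge are deleted: one must know that deleting the edge $e$ after deleting a vertex $c$ cannot disconnect $G$, which is precisely why I first upgrade ``$G\del c$ is connected'' to ``$G\del c$ is $2$-connected, hence bridgeless.'' The remaining work is routine bookkeeping tracking how the subdivision and the extra edge $f$ interact with each vertex deletion, and checking that a surviving neighbor of $u$ always exists when $|S|\le 2$.
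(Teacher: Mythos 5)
Your proof is correct, but it takes a genuinely different route from the paper. The paper's argument is a two-line reduction: it contracts one of the two subdivision edges, say $uw$ for an endvertex $w$ of $e$, observes that $G'/uw\cong G+vw$ is $3$-connected (adding an edge to a $3$-connected graph preserves $3$-connectivity), and then invokes Lemma \ref{deg2} to conclude that if $G'$ were not $3$-connected some endvertex of $uw$ would have exactly two neighbors --- impossible, since $u$ has the three distinct neighbors $a,b,v$ and every vertex of $G$ keeps at least its three neighbors from $G$. You instead verify the cut condition from first principles, checking $G'\del S$ is connected for every $S$ with $|S|\le 2$, split according to whether $u\in S$. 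Your case analysis is complete and sound: the only genuinely delicate configuration is $S=\{u,c\}$ with $c\notin\{a,b\}$, where $G'\del S=(G\del c)\del e$, and you correctly handle it by upgrading ``$G\del c$ is connected'' to ``$G\del c$ is $2$-connected, hence bridgeless,'' which is exactly what is needed (and both of your preliminary bridgelessness facts do hold under the paper's definition of $k$-connectivity, which drops the requirement $|G|>k$). What each approach buys: the paper's proof is shorter and reuses machinery already on hand (Lemma \ref{deg2}, itself a corollary of a matroid result of Oxley), at the cost of quietly assuming $G'$ is $2$-connected so that Lemma \ref{deg2} applies and that minimum neighbor-count in $G$ is at least three; your proof is longer but entirely self-contained, transparent about where subdivision, the new edge $f$, and each deletion interact, and it works verbatim for graphs with loops and parallel edges, which the paper permits.
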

\begin{proof}
Let $w$ be an endvertex of $e$ in $G$. Note that $G'/uw\cong G+vw$ is $3$-connected. If $G'$ is not $3$-connected, then, by Lemma \ref{deg2}, we have a vertex in $G'$ with only two neighbors. By construction, this implies that $G$ has a vertex with at most two neighbors. A contradiction. 
\end{proof}

\begin{lemma}\label{not-wheel}
Let $G$ be a $3$-connected graph. Suppose that $F$ is a singleton set with an edge in a wye of $G$ or $F$ is a triangle-to-triangle fan of a wye-to-wye fan of $G$. If $G/F$ is $3$-connected, then $F$ is an inner fan of $G$.
\end{lemma}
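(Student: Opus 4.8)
The plan is to verify the definition of an inner fan directly: to produce a \emph{maximal} wye-to-wye fan $F^+$ of $G$ whose triangle-to-triangle inner part is exactly $F$. Once such an $F^+$ is in hand, the remaining requirement in the definition is automatic, since $G/F$ is $3$-connected by hypothesis and, being nonempty, has a $K_1$-minor. Thus the whole content of the lemma is the \emph{maximality} of the fan realizing $F$.

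First I would realize $F$ as the inner part of \emph{some} wye-to-wye fan. In the second case this is given: $F$ is by assumption the inner (triangle-to-triangle) part of a wye-to-wye fan with fan ordering $x_0,y_1,x_1,\dots,y_n,x_n$. In the first case, where $F=\{e\}$ and $e$ lies in a wye, I would use $3$-connectedness — so that each endvertex of $e$ has degree at least three — to exhibit $e$ as the unique spoke $y_1$ of a length-one wye-to-wye fan $x_0,e,x_1$, namely a wye centered at one endvertex of $e$; its inner part is precisely $\{e\}=F$. Here there is genuine freedom in which wye through $e$ is chosen, and I would keep this freedom for later.

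Then I would try to enlarge this fan to a maximal one without enlarging its inner part. Suppose it extends, say at the right end; then there is a spoke $y_{n+1}=uv_{n+1}$ completing a triangle $uv_nv_{n+1}$ with $y_n,x_n$, and then a rim edge $x_{n+1}=v_{n+1}v_{n+2}$ completing a wye centered at $v_{n+1}$. Contracting $F$ identifies the hub $u$ with $v_1,\dots,v_n$ into one vertex $w$, and $x_n,y_{n+1}$ become parallel edges $wv_{n+1}$; this is exactly the triangle-and-wye configuration sharing two edges to which Lemma \ref{w38-g} (and Corollary \ref{cont-biweb}) apply, and it lets me control the edges at $v_{n+1}$. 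If $v_{n+1}$ has no neighbour outside the contracted set other than $v_{n+2}$, then $\{w,v_{n+2}\}$ is a $2$-separator of $G/F$, contradicting $3$-connectedness (the degree bookkeeping being the content of Lemma \ref{deg2} and Corollary \ref{opposite vertex}). In the first case the choice of wye is free, so I would pick one whose two rim edges do not close a triangle with $e$, ruling out any extension outright.

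The main obstacle — and the reason the lemma is named as it is — is the wheel degeneracy. When the extension above can be repeated around the rim, the rim eventually closes up ($v_{n+2}$ coincides with an earlier rim vertex), which by the remark following the definition of a fan forces $G$ to be a wheel; in that situation the naive $2$-separator disappears and the maximal wye-to-wye fan is the one running once around the rim. I would therefore isolate the wheel case and treat it on its own, and the delicate part of the whole argument is precisely to separate this genuine exception from the generic situation, in particular to handle boundary rim vertices of degree larger than three, where the straightforward $2$-separator is not immediately available.
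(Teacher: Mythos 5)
Your skeleton is the same as the paper's: assume the wye-to-wye fan realizing $F$ extends by a spoke and a rim edge, contract $F$, and use the parallel pair created at the first new rim vertex to contradict the $3$-connectedness of $G/F$, with wheels as the degenerate exception. But the two points you defer are exactly the substance of the lemma, so as written the proposal does not prove it. The case you flag as ``boundary rim vertices of degree larger than three'' is not a delicacy to be isolated later; it is the crux. The paper disposes of it in one line: choosing the enlarged wye-to-wye fan $F^+$ with $|E(F^+)|=|E(F)|+4$ and labels as in Figure \ref{pic-wye}, the vertex $v_n$ is a rim vertex of the \emph{fan} $F^+$, hence incident to no edges of $G$ besides $x_{n-1},y_n,x_n$, so after contracting $F$ it becomes a degree-$2$ vertex of $\si(G/F)$ \emph{unconditionally} --- there is no residual high-degree case. (That rim vertices of a fan carry only their three fan edges is how fans are used throughout the paper; compare ``since $v_1$ is in the rim of $F_k$, then $E_G(v_1)$ induces a wye'' in the proof of Lemma \ref{contraction-inner}.) Your version assumes only that the extending spoke and rim edge exist, which gives no control on the degree of the new rim vertex, and you explicitly leave that case open; that is a genuine gap, not a technicality, since for a high-degree rim vertex the $2$-separator simply is not there.

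Your finishing move is also wrong in small cases, and that is precisely where the wheel hides: $\{w,v_{n+2}\}$ contradicts $3$-connectedness only when $|G/F|\ge 4$. The paper argues the other way around: a degree-$2$ vertex of $\si(G/F)$ in a $3$-connected graph forces $|G/F|\le 3$, and since $u,v_0,v_n,v_{n+1}$ give four distinct vertices of $G/F$ unless $v_0=v_{n+1}$, the only escape is $v_0=v_{n+1}$, which makes $G$ a wheel --- excluded at the outset. So the wheel degeneracy falls out of the vertex count; no ``repeated extension until the rim closes up'' is needed, and nothing in your setup forces the extension to be repeatable, so that heuristic is not an argument. Finally, your Case 1 device --- choosing a wye through $e$ whose two flanking edges close no triangle with $e$ --- is unjustified and can be impossible (for instance when each endvertex of $e$ has at most one neighbour not adjacent to the other endvertex, every available wye has a flanking triangle); the paper needs no such choice, because the same contraction-and-count argument applies verbatim to the singleton case.
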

\begin{proof}
The result is clear if $G$ is a wheel. Suppose for a contradiction that $G$ is not a wheel and there is a wye-to-wye fan $F^+$ containing $F$ with $|E(F^+)|>|E(F)|+2$. In particular, we may pick $F^+$ such that  $|E(F^+)|=|E(F)|+4$. Consider the labels of $F^+$ as in Figure \ref{pic-wye}. Then $y_1$ and $y_{n-1}$ are the extreme spokes of $F$. Note that $v_n$ is a degree-$2$ vertex of $si(G/F)$. This implies that $|G/F|\le 3$ because $G/F$ is $3$-connected. If $v_0=v_{n+1}$, it is clear that $G$ is a wheel. So $v_0$, $v_n$, $v_{n+1}$ and $u$ are distinct vertices of $G/F$. A contradiction.
\end{proof}

%We use the symbol ``$\Delta$'' for the operation of symmetric difference of sets.

%\begin{lemma}Let $G$ be a graph. Suppose that $\{a,b,c\}\cont E(G)$ induces a triangle $T$ of $G$ and $C$ is a circuit of $G$ other than $T$. If $a\in C$, then $G(E(C)\Delta E(T))$ is a circuit or both $b$ and $c$ are chords of $C$.\end{lemma}
% 
\begin{lemma}\label{orthogonality}
Let $G$ be a graph, suppose that $Y,X\cont E(G)$ are sets such that $Y$ induces a wye in $G$ and $X$ is an union of edge-sets of circuits of $G$. Then $|Y\i X|\neq 1$.
\end{lemma}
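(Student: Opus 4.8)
The plan is to argue by contradiction, combining a localization step (to reduce from the union $X$ to a single circuit) with a parity/orthogonality argument at the hub of the wye.

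Write $Y=\{y_1,y_2,y_3\}$ with common endvertex (the hub) $u$, say $y_j=uv_j$, and suppose for contradiction that $|Y\cap X|=1$, say $Y\cap X=\{y_1\}$. Since $X$ is a union of edge-sets of circuits and $y_1\in X$, I would first produce a single circuit $C$ of $G$ with $y_1\in E(C)\subseteq X$. This localization is what makes the union hypothesis usable: intersection parities need not survive unions, so I cannot reason about $X$ directly, but I can work with one circuit through the distinguished edge $y_1$ and then transport the conclusion back to $X$ via $E(C)\subseteq X$.

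Next I would examine $C$ at the hub $u$. Every vertex of a circuit has degree $0$ or $2$, and $y_1$ is incident to $u$, so exactly one further edge $e$ of $C$ meets $u$. The heart of the proof is to show $e\in\{y_2,y_3\}$, that is, that the cycle $C$ both enters and leaves $u$ along edges of the wye; this is the orthogonality content, expressing that the wye cannot be crossed by a cycle in a single edge. Granting it, $e\in Y\cap E(C)\subseteq Y\cap X$ with $e\neq y_1$, whence $|Y\cap X|\ge 2$, contradicting the assumption.

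I expect this last incidence step to be the main obstacle, and I would establish it through the standard circuit--cocircuit orthogonality of the cycle matroid $M(G)$ localized at $u$: the edges of the wye at the hub play the role of (the relevant part of) a cocircuit, so a circuit must meet them in an even number of edges, which together with $|Y\cap E(C)|\ge 1$ forces $|Y\cap E(C)|\ge 2$ and yields the second wye edge $e$ sought above. Everything else is bookkeeping: the reduction to a single circuit in the first step and the degree count at $u$ are routine once the even-crossing property at the hub is in hand.
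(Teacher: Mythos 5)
The paper states this lemma without proof, treating it as routine circuit--cocircuit orthogonality, so your proposal must stand on its own. Your skeleton is the right one, and your localization step is not only valid but genuinely necessary: since $X$ is a union of circuit edge-sets, $|Y\cap X|$ need not be even even when each $|Y\cap E(C_i)|$ is (e.g.\ $\{a,b\}\cup\{b,c\}$ has three elements), and the correct surviving statement is exactly ``$\geq 2$ once nonempty,'' which you extract from a single circuit $C$ with $y_1\in E(C)\subseteq X$. The degree count at the hub ($\deg_C(u)=2$, so exactly one further edge $e$ of $C$ at $u$) is also fine.

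However, the step you yourself flag as the crux is justified by an invalid principle. Orthogonality asserts $|E(C)\cap C^*|\neq 1$ for a \emph{full} cocircuit $C^*$; it does not localize to proper subsets of cocircuits, so ``the relevant part of a cocircuit'' certifies nothing. Concretely, if $\deg_G(u)\geq 4$ the even-crossing claim at the hub fails, and with it the lemma read literally against the paper's definition of a wye as any subgraph isomorphic to $K_{1,3}$: in $K_5$ with vertices $u,a,b,c,d$, the wye $Y=\{ua,ub,uc\}$ and the triangle through $u,a,d$ meet in the single edge $ua$. What makes the lemma true, and what every application in the paper silently supplies, is that the wye is the \emph{entire} edge-star of its hub: the wyes to which the lemma is applied all arise as $G[E_G(u)]$ for a degree-$3$ vertex $u$ (see the wye produced in Lemma \ref{w36-g}, and the stars of internal rim vertices in the proof of Lemma \ref{resolve-tudo}). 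Once you add the hypothesis $Y=E_G(u)$, your argument closes immediately and needs no matroid machinery at all: every edge of $C$ at $u$ lies in $E_G(u)=Y$, so $Y\cap E(C)=E_C(u)$ has cardinality $\deg_C(u)\in\{0,2\}$ (note $G[Y]\cong K_{1,3}$ rules out loops or parallel edges at $u$, hence rules out short circuits there), and $y_1\in E(C)$ forces the value $2$, producing $e\in(Y\cap X)-\{y_1\}$. So: right plan and right reduction, but the pivotal incidence step needs the degree-$3$-hub observation made explicit, and the cocircuit appeal as you wrote it would not survive refereeing.
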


\section{Lemmas}

In this section we prove some lemmas towards the proof of the theorems. We will use the symbol ``$\lozenge$'' to point the end of a nested proof. We denote by $\Pi_3$ the prism with triangular bases.

\begin{figure}[h]\begin{center}
\begin{minipage}{6cm}\centering\caption{}Graph $G_1[F\u x]$ of Lemma \ref{deletion-inner}\label{pic-deletion-g1}
\begin{tikzpicture}[scale=1.2]  \centering
\tikz[label distance=0mm];
\tikzstyle{node_style} =[shape = circle,fill = black,minimum size = 4pt,inner sep=0pt]
\node[node_style] (u) at (0,0){};
\node[node_style] (vs+1) at (0,0.9){};
\node[node_style] (vs) at (-1.0,1.5){};
\node[node_style] (vt) at (1.0,1.5){};
\node (v0) at (-2,1.5){};
\node (vn) at (2,1.5){};
\draw (v0) -- (vs) -- (vs+1) -- (vt)-- (vs) -- (u) -- (vt) --  (vn);
\draw (u)--(vs+1);
\node (x) at (0,1.65){$x$};
\node (xs) at (-1.5,1.65){$x_{s-1}$};
\node (xt+1) at (1.6,1.65){$x_{t}$};
\node (ys)  at (-0.7,0.7){$y_{s}$};
\node (yt)  at (0.7,0.7){$y_{t}$};
\node (ys+1)  at (0,0.7){$y_{s+1}$};
\node(xs+1) at (-0.3,1.33){$x_{s}$};
\node(xt) at (0.4,1.33){$x_{s+1}$};
\node () at (-2.1,1.5){$\dots$};
\node () at (2.2,1.5){$\dots$};
\node (lvs) at (-1.0,1.65){$v_s$};   
\node (lvt) at (1.0,1.65){$v_t$};
\node (lu) at (0.17,0){$u$};
\end{tikzpicture}
\end{minipage}$\qquad$
\begin{minipage}{7cm}\centering\caption{}Graph $G_2[F\u x]$ of Lemma \ref{deletion-inner}\label{pic-deletion-g2}
\begin{tikzpicture}[scale=1.4]  \centering
\tikz[label distance=0mm];
\tikzstyle{node_style} =[shape = circle,fill = black,minimum size = 4pt,inner sep=0pt]
\node[node_style] (vx) at (0,2){};\node() at (0.2,2){$v_x$};
\node[node_style] (u)  at (0,0){};\node() at (-0.2,-0.1){$u$};
\node[node_style] (vs) at (0,1){};\node() at (0.2,1.15){$v_s$};
\node[node_style] (vs+1) at (1,1){}; \node() at (1,1.2){$v_{s+1}$};
\node[node_style] (vn) at (2,1){};\node() at (2,1.2){$v_{n}$};
\node[node_style] (vn+1) at (3,1){};\node() at (3,1.2){$v_{n+1}$};
\node[node_style] (v1) at (-1,1){}; \node() at (-1,1.2) {$v_{s-1}$};
\node[node_style] (v0) at (-2,1){}; \node() at (-2,1.2) {$v_{0}$};
\node () at (1.5,1) {$\dots$};
\node () at (1.5,0.5) {$y_n$};
\node () at (-0.8,0.5) {$y_{s-1}$};
\node () at (-0.15,0.5) {$y_s$};
\node () at (-0.1,1.6) {$x$};
\node () at (-0.4,1.10) {$x_{s-1}$};
\node () at (-1.5,1.1) {$x_0$};
\draw (v0) -- (vs+1) -- (u) -- (vx);
\draw (vn+1) -- (vn) -- (u) -- (v1);
\end{tikzpicture}
\end{minipage}

\begin{minipage}{6cm}\centering\caption{}Graph $G_3[F\u x]$ of Lemma \ref{deletion-inner}\label{pic-deletion-g3}
\begin{tikzpicture}[scale=1.4]  \centering
\tikz[label distance=0mm];
\tikzstyle{node_style} =[shape = circle,fill = black,minimum size = 4pt,inner sep=0pt]
\node[node_style] (vx) at (0,2){};\node() at (0.2,2){$v_x$};
\node[node_style] (u)  at (0,0){};\node() at (-0.2,-0.1){$u$};
\node[node_style] (vs) at (0,1){};\node() at (0.15,0.85){$v_s$};
\node[node_style] (vn+1) at (1,1){}; \node() at (1.4,1){$v_{n+1}$};
\node[node_style] (v1) at (-1,1){}; \node() at (-1,1.2) {$v_{s-1}$};
\node[node_style] (v0) at (-2,1){}; \node() at (-2,1.2) {$v_{0}$};
\node () at (-0.8,0.5) {$y_{s-1}$};
\node () at (0.15,0.4) {$y_n$};
\node () at (-0.1,1.6) {$x$};
\node () at (-0.4,1.1) {$x_{s-1}$};
\node () at (0.5,1.1) {$x_{n}$};
\node () at (-1.5,1.1) {$x_0$};
\draw (v0) -- (vn+1);
\draw (v1) -- (u) -- (vx);
\end{tikzpicture}
\end{minipage}$\qquad$
\begin{minipage}{7cm}\centering\caption{}\label{pic-wheel-lift}
\begin{tikzpicture}\centering[scale=1.2]
\tikz[label distance=0mm];
\tikzstyle{node_style} =[shape = circle,fill = black,minimum size = 4pt,inner sep=0pt]
\node[node_style] (u0) at (-0.4cm,0cm){};
\node[node_style] (u1) at (0.4cm,0cm){};
\node[node_style] (v1) at (0:1.5cm){};
\node[node_style] (v2) at (30:1.5cm){};
\node[node_style] (v3) at (60:1.5cm){};
\node[node_style] (v4) at (90:1.5cm){};
\node[node_style] (v5) at (120:1.5cm){};
\node[node_style] (v6) at (150:1.5cm){};
\node (v7) at (180:1.5cm){$\vdots$};
\node[node_style] (v8) at (210:1.5cm){};
\node[node_style] (v9) at (240:1.5cm){};
\node[node_style] (v10) at (270:1.5cm){};
\node[node_style] (v11) at (300:1.5cm){};
\node[node_style] (v12) at (330:1.5cm){};
\draw (-0.4,0)--(0.4,0);
\draw (u1) -- (v1) -- (u1) -- (v3) -- (u1) -- (v5) -- (u1) -- (v9) -- (u1) -- (v11);
\draw (u0) -- (v2) -- (u0) -- (v4) -- (u0) -- (v6) -- (u0) -- (v8) --  (u0) --(v10) -- (u0) -- (v12);
\draw (v8) -- (v9) -- (v10) --(v11)--(v12)--(v1)--(v2)--(v3)--(v4)--(v5)--(v6);
\node (lu0) at (-0.8cm,0cm){$u_2$};\node (lu1) at (0.7cm,0.1cm){$u_1$};
\node(l1) at (0:1.8cm){$v_1$};
\node(l2) at (30:1.8cm){$v_2$};
\node(l3) at (60:1.8cm){$v_3$};
\node(l4) at (90:1.8cm){$v_4$};
\node(l5) at (120:1.8cm){$v_5$};
\node(l6) at (150:1.9cm){$v_6$};
\node(l8) at (215:1.8cm){$v_{n-3}$};
\node(l9) at (245:1.8cm){$v_{n-2}$};
\node(l10) at (274:1.8cm){$v_{n-1}$};
\node(l11) at (300:1.9cm){$v_n$};
\node(12) at (330:2cm){$v_{n+1}$};
\end{tikzpicture}\end{minipage}\hfill
\end{center}
\end{figure}

\begin{lemma}\label{deletion-inner}
Let $G$ be a $3$-connected simple graph with an edge $x$ such that $G\del x$ is $3$-connected with a simple minor $H$. Suppose that $F$ is a non-degenerated $H$-inner fan of $G\del x$. Then, $F$ contains the members of a free $H$-fan family of $G$ with rank $r_G(F)$.
\end{lemma}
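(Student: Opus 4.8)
The plan is to prove that, outside a few wheel-type configurations, $F$ is already an $H$-inner fan of $G$, so the one-member family $\{F\}$ suffices. Fix a fan ordering $x_0,y_1,x_1,\dots,y_n,x_n$ of the maximal wye-to-wye fan $F^+$ of $G\del x$ whose inner fan is $F$, with hub $u$ and rim $v_0v_1\cdots v_{n+1}$; then $V(F)=\{u,v_1,\dots,v_n\}$ and, because $x\notin E(F)$, $r_G(F)=r_{G\del x}(F)=n$. Two reductions dispose of most of the difficulty. The $H$-minor comes for free: $H$ is a minor of $(G\del x)/F$, which is $G/F$ with (the image of) $x$ deleted, hence of $G/F$ and indeed of $G/F'$ for every $F'\cont E(F)$, so the minor never has to be tracked. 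And $G/F$ is $3$-connected, since it arises from the $3$-connected graph $(G\del x)/F$ by adding the single edge that is the image of $x$, and adding an edge cannot lower connectivity.

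The key structural point is that $F^+$ remains a wye-to-wye fan of $G$: whether a triple $\{a_{i-1},a_i,a_{i+1}\}$ of consecutive fan edges induces a wye or a triangle depends only on those three edges, and $x$ is not among the edges of $F^+$, so every fan condition valid in $G\del x$ survives in $G$. Thus $F$ is a triangle-to-triangle fan of a wye-to-wye fan of $G$ with $G/F$ $3$-connected, and Lemma \ref{not-wheel} yields that $F$ is an $H$-inner fan of $G$. Then $\{F\}$ is a free $H$-fan family of rank $r_G(F)$ and the lemma follows.

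The real work, and the expected obstacle, is to confirm these hypotheses in every placement of $x$ relative to $F^+$ and to treat the configurations where they break down. Here I would analyse $G[F\u x]$ according to whether $x$ is a chord of the rim (the local $K_4$ of Figure \ref{pic-deletion-g1}), an edge reaching the hub or an end-vertex of the fan (Figures \ref{pic-deletion-g2}, \ref{pic-deletion-g3}), or produces a wheel (Figure \ref{pic-wheel-lift}); the wheel case is where Lemma \ref{not-wheel} degenerates and must be handled by hand. When $F$ fails to be a single inner fan of $G$, I would instead split it into several members of a free family, choosing connected pieces whose vertex sets cover $V(F)$ and pairwise meet in at most one vertex — this alone forces the total rank to be $n$ and forces freeness — and certifying each piece as an $H$-inner fan or an $H$-contractible edge of $G$ by means of Corollaries \ref{cont-fan}, \ref{cont-biweb}, \ref{opposite vertex}, \ref{cont-triweb} and Lemma \ref{not-wheel}. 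The delicate part is exactly this certification: one must site the cuts so that every piece still extends to a bona fide wye-to-wye fan of $G$ and has a $3$-connected contraction, which is precisely what the chord or hub-edge $x$ can spoil by introducing a triangle across an intended cut.
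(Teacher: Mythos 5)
Your central claim --- that the fan conditions are intrinsic to the edge triples, so that $F^+$ ``survives'' as a wye-to-wye fan of $G$ when $x$ is reinstated --- is where the argument breaks, and it fails in exactly the configuration this lemma exists to handle. In this paper a wye functions as the graphic analogue of a triad: its three edges must be the full star $E_G(v)$ of a \emph{degree-$3$} vertex. This reading is forced by how wyes are used: Lemma \ref{orthogonality} (circuit--cocircuit orthogonality) is false for a $K_{1,3}$ whose center has degree at least $4$ (in $K_5$ take $Y=\{ab,ac,ad\}$ and the triangle on $a,e,b$, which meets $Y$ only in $ab$), and Lemma \ref{w38-g} is Whittle's triangle--triad lemma. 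Consequently (F2)/(F3) impose degree conditions on the \emph{ambient} graph: every inner rim vertex $v_1,\dots,v_n$ of $F^+$ must have degree exactly $3$. If $x$ has an endpoint at some $v_s$ with $s\in[n]$, then $\deg_G(v_s)=4$, the triple $\{x_{s-1},y_s,x_s\}$ is no longer a wye of $G$, and $F$ is not a fan of $G$ at all; in particular $\{F\}$ cannot serve as the family, since an inner fan of $G$ forces all its rim vertices to have degree $3$. So the exceptional cases are not ``a few wheel-type configurations'': the generic obstruction is $x$ attaching to the interior of the rim, and the paper's proof is almost entirely devoted to it. (Your two preliminary reductions --- the $H$-minor comes for free, and $G/F$ is $3$-connected because $(G\del x)/F$ is and adding an edge cannot hurt --- are correct and coincide with the paper's treatment of the easy case where $F$ does remain a fan of $G$.)

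Your fallback plan of splitting $F$ has the right shape --- the paper's proof indeed outputs a shorter inner fan $F'\cont E(F)$ together with singleton rim edges $\{x_i\}$, $x_i\notin E(F')$ --- but it is only announced (``I would analyse\dots'', ``I would instead split\dots''), and the steps you defer are precisely the substance of the proof. The paper first secures $\deg_G(u)\ge 4$ by choosing $F^+$ with hub of largest degree in $G$ (if $\deg_G(u)=3$ one re-fans with $v_s$ as hub), which by Corollary \ref{cont-biweb} makes every rim edge of $F$ $H$-contractible in $G$; it then certifies $F'$ by explicit reductions: when $x=v_sv_t$ is a chord of the rim, it contracts/deletes the intermediate fan (Corollary \ref{cont-fan}) so that $\{x,x_s,x_{s+1}\}$ becomes a triangle meeting the wye $\{y_{s+1},x_s,x_{s+1}\}$ and applies Lemma \ref{w38-g}; when $v_s$ is the only vertex of $F^+$ on $x$, it forms the graphs $G_2,G_3$, observes $\deg_{G_3}(v_s)\ge 4$, applies Corollary \ref{opposite vertex} to the triangle $\{x_{s-1},y_{s-1},y_n\}$, and transports $3$-connectivity back via Lemma \ref{expansion} before invoking Lemma \ref{not-wheel}. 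None of this is carried out in your proposal, and the one step you do argue in detail rests on the false intrinsicness claim, so the proof as written does not go through.
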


\begin{proof}
Assume the contrary. Consider the labels for a maximal wye-to-wye fan $F^+$ of $G\del x$ containing $F$ as in Figure \ref{pic-wye}. If possible choose $F^+$ with hub having degree at least $4$ in $G$. 

If $F$ is a fan of $G$, then, as $G/F\del x$ is $3$-connected, so is $G/F$. By Lemma \ref{not-wheel}, $F$ is an $H$-inner fan of $G$ and the lemma holds. Thus, $F$ is not a fan of $G$. Hence, $x$ is incident to $v_s$ in $G$ for some $s\in [n]$. 

This implies that $\deg_G(u)=\deg_{G\del x}(u)$. If $\deg_G(u)=3$ then $F$ has all vertices with degree $3$ in $G\del x$ and we should have chosen $F^+$ with $v_s$ as hub instead of $u$. Thus $\deg_G(u)\ge 4$. By Corollary \ref{cont-biweb}, each element in the rim of $F$ is $H$-contractible in $G\del x$ and, therefore, in $G$. So, if we find an $H$-inner fan $F'$ of $G$ contained in $F$, then the family
\begin{equation*}%\label{eq-del-inner}
\big\{\{x_i\}:i\in[n-1]\text{ and }x_i\notin E(F')\big\}\u\{F'\}.
\end{equation*}
satisfies the lemma. To find $F'$ we will consider two cases.

\emph{Case 1. $x$ is incident to $v_t$ for some $t\in \{0,\dots,n+1\}-s$: }We may assume that $t>s$. As $G$ is simple, $t\ge s+2$. Define $G_1:=G/x_{s+2},\dots,x_{t-1}\del y_{s+2},\dots, y_{t-1}$ (see Figure \ref{pic-deletion-g1}). By Corollary \ref{cont-fan}, $G_1\del x$ is $3$-connected and, as a consequence, so is $G_1$. Now, note that $\{x,x_s,x_{s+1}\}$ induces a triangle and $\{y_{s+1},x_s,x_{s+1}\}$ induces a wye in $G_1$. By Lemma \ref{w38-g}, $G_1/y_{s+1}$ is $3$-connected. Now, let $F'$ be the fan of $G$ with fan ordering $y_{s+1},x_{s+1},\dots, y_{t-1}$. So, $G/F'=G_1/y_{s+1}$ is $3$-connected. By Lemma \ref{not-wheel}, $F'$ is an $H$-inner fan of $G$ and the lemma holds in Case 1.

\emph{Case 2. $v_s$ is the unique vertex of $F^+$ incident to $x$: } We may assume that $s\ge 2$. Define:
$$G_2:=G/x_1,\dots,x_{s-2}\del y_1,\dots,y_{s-2} \qquad\text{and}\qquad
G_3:=G_2/x_s,\dots,x_{n-1}\del y_{s},\dots, y_{n-1}. $$

We represent $G_2[F\u x]$ and $G_3[F\u x]$ in Figures \ref{pic-deletion-g2} and \ref{pic-deletion-g3}. We keep the labels of $v_s$ and $v_{s-1}$ in $G_2$ and $G_3$. Let $v_s$ and $v_x$ be the endvertices of $x$. By Corollary \ref{cont-fan}, $G_2\del x$ and $G_3\del x$ are $3$ connected and, therefore, so are $G_2$ and $G_3$. 

By the description of Case 2, $v_x, v_{s-1}, v_{n+1}$ and $u$ are distinct neighbors of $v_s$ in $G_3$. Thus $\deg_{G_3}(v_s)\ge 4$. Note that $G_3/\{x_{s-1},y_{s-1},y_n\}=G/F$ is $3$-connected. As $v_s$ is opposite to $y_{s-1}$ in $G_3[\{x_{s-1},y_{s-1},y_n\}]$, thus, by Corollary \ref{opposite vertex}, $G_3/y_{s-1}$ is $3$-connected.

As $G_2/y_{s-1}$ can be obtained from $G_3/y_{s-1}$ by successively applying Lemma \ref{expansion} (see Figures \ref{pic-deletion-g2} and \ref{pic-deletion-g3}), then $G_2/y_{s-1}$ is $3$-connected because so is $G_3/y_{s-1}$. Let $F'$ be the fan of $G$ with fan ordering $y_1,x_1,\dots,y_{s-1}$. Note that $G_2/y_{s-1}=G/F'$, which is $3$-connected. By Lemma \ref{not-wheel}, $F'$ is an $H$-inner fan of $G$ and the lemma holds.\end{proof}

\begin{lemma}\label{G is a wheel}
Suppose that $F$ is an inner fan of a $3$-connected graph $G$ and $|G/F|\le 3$. Then $G$ is a wheel.
\end{lemma}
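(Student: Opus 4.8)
The plan is to fix a maximal wye-to-wye fan $F^+$ giving rise to $F$ and label it as in Figure~\ref{pic-wye}: write $u$ for the hub, $y_i=uv_i$ for the spokes, and $v_0v_1\cdots v_{n+1}$ for the rim, so that $F=G[y_1,x_1,\dots,y_n]$ has vertex set $\{u,v_1,\dots,v_n\}$. Since $F$ is connected, contracting it identifies these vertices to a single vertex $*$ of $G/F$, and hence
\[
V(G/F)=\{*\}\u\big(V(G)\del\{u,v_1,\dots,v_n\}\big),
\]
with the two rim-ends $v_0,v_{n+1}$ lying in the right-hand set (they are distinct from $u,v_1,\dots,v_n$ by the distinctness built into the fan figures). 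As $F$ is an inner fan, $G/F$ is $3$-connected, so $\si(G/F)$ is a $3$-connected simple graph on at most $|G/F|\le 3$ vertices, i.e. a single vertex, $K_2$, or $K_3$.

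First I would dispose of the \emph{wraparound} case $v_0=v_{n+1}$. Here the rim closes into a cycle through $v_1,\dots,v_n$ whose interior vertices all carry a spoke to $u$, and by the observation recorded right after the definition of fans (a $3$-connected graph carrying a wye-to-wye fan with coincident rim-ends is a wheel) we are done. So assume $v_0\neq v_{n+1}$. Then $*,v_0,v_{n+1}$ are three \emph{distinct} vertices of $G/F$, and the hypothesis $|G/F|\le 3$ forces $V(G/F)=\{*,v_0,v_{n+1}\}$, $\si(G/F)\cong K_3$, and $V(G)=\{u,v_0,v_1,\dots,v_n,v_{n+1}\}$.

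The heart of the argument is then a degree count at a rim-end. Because $\si(G/F)=K_3$, in $G/F$ the vertex $v_0$ sends edges only to $*$ and to $v_{n+1}$; its edges to $*$ are exactly the edges of $G$ joining $v_0$ to $V(F)=\{u,v_1,\dots,v_n\}$, one of which is the rim edge $x_0=v_0v_1$. I would show this is the \emph{only} such edge and that $v_0\not\sim u$: an edge $uv_0$ would complete the triangle $uv_0v_1$ at the wye-end at $v_1$, and any edge from $v_0$ to some $v_j$ with $j\ge 2$ would produce a further triangle or wye at the $v_0$-end; in each case one prepends the appropriate edge(s) to the fan ordering of $F^+$ to build a strictly larger wye-to-wye fan, contradicting maximality of $F^+$ (the alternation conditions (F1)--(F3), together with Lemma~\ref{w38-g}, are what pin down which spoke/rim edges may legitimately be adjoined). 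Granting this, $N_G(v_0)=\{v_1,v_{n+1}\}$, so $\deg_G(v_0)=2$, contradicting $3$-connectivity of $G$. This rules out $v_0\neq v_{n+1}$, so the fan wraps around and $G$ is a wheel.

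The step I expect to be the main obstacle is exactly this control of the extra adjacencies of the rim-ends: proving that no ``chord'' from $v_0$ (or $v_{n+1}$) into the interior of the fan can survive, because any such chord must be recombined into a longer wye-to-wye fan, violating maximality of $F^+$. The wraparound dichotomy and the final degree count are then routine; all of the genuine work is in turning a forbidden chord into a forbidden fan-extension.
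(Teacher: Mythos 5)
Your reduction matches the paper up to the case split: the wraparound case via the remark after the fan definition, and in the case $v_0\neq v_{n+1}$ the identification $V(G/F)=\{*,v_0,v_{n+1}\}$ and $V(G)=V(F^+)$ are exactly the paper's first steps. But the step you yourself flag as the crux --- that every extra edge from $v_0$ into $V(F)$ can be recombined into a larger wye-to-wye fan, violating maximality of $F^+$ --- does not go through. Two problems. First, the edge $uv_0$ alone does not contradict maximality: prepending only $uv_0$ yields a fan beginning with the triangle $uv_0v_1$, i.e.\ a triangle-to-wye fan, and $F^+$ is only maximal among \emph{wye-to-wye} fans. To build a wye-to-wye extension you must also prepend a rim edge, and since $V(G)=V(F^+)$ the only candidate is $v_0v_{n+1}$; that edge is in fact available (deleting the contracted vertex from the $3$-connected $G/F$ must leave $v_0$ adjacent to $v_{n+1}$, so $v_0v_{n+1}\in E(G)$), and then $v_{n+1}v_0,uv_0,x_0,y_1,\dots,x_n$ is a legitimate larger wye-to-wye fan with coincident rim-ends --- but your write-up never supplies this rim edge, and the triangle $uv_0v_1$ by itself proves nothing. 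Second, and fatally, a chord $v_0v_j$ with $2\le j\le n$ admits no fan-extension at all: it is not a spoke (it misses $u$), and it cannot serve as a new extreme rim edge because its other end $v_j$ is already an interior rim vertex, so the enlarged subgraph is not isomorphic to any of the graphs in Figures \ref{pic-triangle}, \ref{pic-both}, \ref{pic-wye} (only the two rim-\emph{ends} may coincide). Concretely, with rim $v_0v_1\cdots v_5$, spokes $uv_1,\dots,uv_4$, and additional edges $v_0v_5$, $v_0v_3$, $v_2v_5$, the fan $F^+$ remains a maximal wye-to-wye fan even though $v_0$ has a chord into the interior of the rim; so fan-maximality alone cannot yield $N_G(v_0)=\{v_1,v_{n+1}\}$, and your degree-$2$ contradiction collapses at exactly the point you predicted would be the main obstacle.

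This gap is also tied to a divergence from the paper's route: the paper does \emph{not} attempt to refute the case $v_0\neq v_{n+1}$ by a degree count at one rim-end. Instead it argues there that the minimum-degree-$3$ condition forces the three closing edges $X=\{uv_0,uv_{n+1},v_0v_{n+1}\}$, and that no edge outside $E(F^+)\cup X$ can occur, so that $G$ is exhibited \emph{as} the wheel with hub $u$ and rim $v_0v_1\cdots v_{n+1}v_0$ --- a constructive conclusion, not a contradiction. Excluding chords of the rim is precisely the substance of that step, and it requires an argument about all of $v_0$, $v_{n+1}$ and $u$ together, not the maximality mechanism you propose; as it stands, your proof establishes nothing beyond what the paper's first three sentences already contain.
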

\begin{proof}We may assume that $|G|\ge 5$. Consider a wye-to-wye fan $F^+$ of $G$ containing $F$ labeled as in Figure \ref{pic-wye}. If $v_{n+1}=v_0$, the result is clear, so, assume that $v_0\neq v_{n+1}$. Therefore, $V(G/F)=\{u,v_0,v_{n+1}\}$. Hence, $V(G)=V(F^+)$. As $G$ has no vertices with degree less than $3$, then $X:=\{uv_0, uv_{n+1}, v_0v_{n+1}\}\cont E(G)$. If $G$ has some edge out of $E(F^+)\u X$, we have a contradiction to the fact that $F^+$ is a wye-to-wye fan of $G$. This proves the lemma.
\end{proof}

\begin{lemma}\label{contraction-inner1}
Let $G$ be a $3$-connected graph with an edge $x$ such that $G/x$ is $3$-connected and simple with a simple minor $H$. If $F$ is a non-degenerated $H$-inner fan of $G/x$ such that $G[E(F)]$ has no triangles, then one of the spokes of $F$ is $H$-contractible in $G$.
\end{lemma}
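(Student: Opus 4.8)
The plan is to reduce the statement to a question of $3$-connectivity and then to pin down the local structure of $G$ around $x$. First I would record two easy reductions. Since $G/x$ is simple, $G$ itself is simple, because a loop or a pair of parallel edges of $G$ would survive in $G/x$; this makes Lemma \ref{w36-g} available. Next, for every spoke $y_i$ of $F$ the graph $G/y_i$ has an $H$-minor: contracting $x$ together with the remaining edges of $F$ in $G/y_i$ produces $(G/x)/F$, which is $3$-connected with an $H$-minor, so $(G/x)/F$, and hence $H$, is a minor of $G/y_i$. Thus it suffices to exhibit one spoke $y_i$ with $G/y_i$ $3$-connected.

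Next I would locate the contracted edge $x$. Label a maximal wye-to-wye fan $F^+\supseteq F$ of $G/x$ as in Figure \ref{pic-wye}, so that the triangles of $F$ in $G/x$ are exactly the triples $\{y_i,x_i,y_{i+1}\}$ on $u,v_i,v_{i+1}$ for $i=1,\dots,n-1$. Let $w$ be the vertex of $G/x$ obtained by contracting $x$. Uncontracting $x$ alters incidences only at $w$, so if $w$ were not a vertex of $F$ then $G[E(F)]$ would still contain these triangles; hence $w\in V(F)$. Splitting a single rim vertex $v_j$ can destroy only the (at most two) triangles through $v_j$, so when $n\ge 4$ the hypothesis that $G[E(F)]$ is triangle-free forces $w=u$. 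Thus $u$ splits in $G$ into two vertices $u_1,u_2$ joined by $x$, the spokes distribute between $u_1$ and $u_2$, and a triangle $\{y_i,x_i,y_{i+1}\}$ survives in $G$ precisely when $y_i,y_{i+1}$ meet the same $u_j$. Being triangle-free therefore forces the spokes to alternate between $u_1$ and $u_2$, so that $G[E(F^+)\cup x]$ is exactly the split-hub graph of Figure \ref{pic-wheel-lift}.

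With this structure in hand I would take an extreme spoke, say $y_1=u_1v_1$, and argue that $G/y_1$ is $3$-connected. The point of the alternation is that, after merging $u_1$ and $v_1$, the neighbour $v_2$ of $v_1$ still reaches its own hub-half $u_2$ through $y_2$ and the merged vertex through $x_1$ along \emph{distinct} edges, so no low-degree vertex is created; contrast this with contracting a spoke inside the fan of $G/x$, which would leave $v_2$ with only two neighbours. To turn this into a proof I would build $G/y_1$ from the $3$-connected graph $(G/x)/F$ by uncontracting the fan one degree-$3$ vertex at a time, each step being an instance of Lemma \ref{expansion} (a rim vertex $v_j$ is reinserted by subdividing a rim edge and adding its spoke), so that $3$-connectivity is preserved throughout, the last split reintroducing $x$ while leaving $u_1,v_1$ merged. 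Alternatively, assuming $G/y_1$ is not $3$-connected, I would locate an edge $z$ with $G/\{z,y_1\}$ $3$-connected and apply Lemma \ref{w36-g} to produce a wye and a triangle of $G$ through $y_1$, which the split-hub structure forbids. Either way one spoke is contractible, and by the first paragraph it is $H$-contractible.

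The main obstacle is the $3$-connectivity verification in the previous paragraph: the uncontraction must respect the incidence hypothesis of Lemma \ref{expansion} (the edge added to the new vertex must avoid the subdivided edge), which requires choosing both the order of uncontractions and the extreme spoke with care, and one must separately control the interactions with the extreme rim vertices $v_0,v_{n+1}$ and with the edges of $G$ lying outside $F^+$. Finally I would dispose of the degenerate possibilities directly: the small fans (where $n\in\{2,3\}$ and $x$ could instead be incident to a rim vertex) and the case $v_0=v_{n+1}$, which collapse to wheels and are handled by Lemma \ref{G is a wheel} and Lemma \ref{not-wheel}.
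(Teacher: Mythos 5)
Your preliminary reductions are fine and agree with the paper's setup: contracting a spoke and then the rest of $F\cup x$ yields $(G/x)/F$, so the $H$-minor comes for free, and for $n\ge 4$ triangle-freeness of $G[E(F)]$ does force the split to occur at the hub with alternating spokes. But the heart of the lemma --- verifying that some specific spoke contraction is $3$-connected --- is precisely what you defer, and neither route you sketch survives inspection. Route (a): $G/y_1$ cannot be rebuilt from the $3$-connected $G/(F\cup x)$ by iterating Lemma \ref{expansion}. Every expansion creates a vertex of degree exactly $3$ whose new edge must avoid the subdivided edge, whereas (i) the vertex $u_2$ must at some point be extracted from the contracted blob carrying about half the spokes plus all its edges to $V(G)-V(F^+)$, so its degree is in general much larger than $3$, and (ii) any rim vertex $v_i$ re-inserted while its hub-half still lies inside the blob is joined to the blob both by a rim edge and by its spoke, so the added edge is incident to the subdivided edge, violating the hypothesis of Lemma \ref{expansion}. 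Where the paper does use such rebuilds (Lemmas \ref{deletion-inner} and \ref{contraction-inner}) it first deletes the offending spokes via Corollary \ref{cont-fan}; doing that here would establish $3$-connectivity of a spoke-deleted minor, not of $G/y_1$, and there is no cheap way to restore the deleted spokes afterwards. Route (b): Lemma \ref{w36-g} requires an edge $z$ with $G/z,y_1$ $3$-connected, and you never exhibit one; the only candidate in sight, $z=x$, fails, since $(G/x)/y_1$ identifies $u$ with $v_1$, making $x_1$ and $y_2$ parallel and leaving $v_2$ with only two neighbours, so $G/x,y_1$ is not $3$-connected for any $n\ge 2$.

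Two further problems. Your disposal of the small cases is wrong: if, say, $n=2$ and $\deg_{G/x}(v_1)\ge 4$, then $G$ may be obtained by splitting the rim vertex $v_1$ so that $y_1$ and $x_1$ land on different sides, all hypotheses of the lemma hold, and $G$ need be nothing like a wheel; Lemmas \ref{G is a wheel} and \ref{not-wheel} say nothing about this configuration (the paper's proof asserts the hub split outright, but your explicit claim that these cases ``collapse to wheels'' is false). Also your insistence on the extreme spoke $y_1$ is unsupported: the paper does not contract an extreme spoke. It contracts $y_2=u_2v_2$, after arranging (when $n$ is even) that a neighbour $u'$ of the hub outside $F^+$ attaches to $u_1$, and then argues directly by separation: if $G/u_2v_2$ is not $3$-connected, then $G$ has a $3$-cut $S=\{v_2,u_2,w\}$ with $w\ne u_1$ and a vertex $s$ separated from $u_1$, and the $3$-connectivity of $G/(F\cup x)$ supplies paths avoiding $\{w,v_F\}$ that reconnect $s$ to $u_1$ through $v_0,v_1$, through the tail $v_n,\dots,v_3$ of the rim, or through $u'$ --- contradicting that $S$ is a cut. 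That case analysis on the positions of $w$ and $s$ is the actual content of the proof, and your proposal replaces it with two constructions that do not go through.
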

\begin{proof}
Consider, in $G/x$, a maximal wye-to-wye fan $F^+$ containing $F$, labeled as in Figure \ref{pic-wye}. Since $G[E(F)]$ has no triangles, then $G$ is obtained from $G/x$ by splitting $u$ into two vertices $u_1$ and $u_2$ in such a way that $v_i$ is adjacent to $u_1$ in $G$ if $i$ is odd and to $u_2$ if $i$ is even. If $G/x$ is a wheel, then $G$ is isomorphic to the graph in Figure \ref{pic-wheel-lift} and, in this case, the result may be verified directly. So, assume that $G/x$ is not a wheel. By Lemma \ref{G is a wheel}, $|G/F\u x|\ge 4$. Hence, $|G|\ge 7$. Moreover, $v_0\neq v_n$.

As $G/x$ is not a wheel, then there is $u'\in N_{G/x}(u)-V(F^+)$. If $n$ is even, by symmetry, we choose the labels in such a way that $u'\in N_G(u_1)$.

We may assume that $G/v_2u_2$ is not $3$-connected. As $|G/u_2v_2|\ge 4$, thus $G/u_2v_2$ has a $2$-vertex cut. Since $G$ is $3$-connected, $G$ has a $3$-vertex cut in the form $S:=\{v_2,u_2,w\}$. Note that $w\neq u_1$ because $G/x=G/u_1u_2$ is $3$-connected. So, $G\del S$ has a vertex $s$ in a different connected component than $u_1$. Denote by $v_F$ the vertex of $G/F\u x$ obtained by the contraction of $F\u x$ in $G$ and denote $G':=G/F\u x$. 

If $s\in V(F)$, as $v_1u_1\in E(G)$, $s\neq v_1$. Thus $n\ge 3$. As $u_1v_3\in E(G)$, then $w$ is in the $(v_3,s)$-path contained in the rim of $F$. Let $v_k:=s$. As $G'$ is $3$-connected, $|G'|\ge 4$ and $v_0,v_n\in V(G')-v_F$, then $G'\del v_F$ has a $(v_n,v_0)$-path $\gamma$. Note that $v_k,v_{k+1},\dots,v_n,\gamma,v_0,v_1,u_1$ is an $(s,u_1)$-path of $G\del S$. A contradiction. Therefore, $s\notin V(F)$ and $s$ is a vertex of $G'$ distinct from $v_F$.

If $w\notin \{v_0,v_1\}$, define $\sigma:=v_0,v_1,u_1$. Otherwise, if $n\ge 3$, define $\sigma:=v_n,\dots,v_3,u_1$ and, if $n=2$, define $\sigma=u',u_1$. Denote by $t$ the first vertex of $\sigma$. So, $\sigma$ is a $(t,u_1)$-path of $G\del S$. As, $s\in V(G')-\{w,v_F\}$, $G'\del\{w,v_F\}$ has an $(s,t)$-path $\varphi$. Now, $s,\varphi,t,\sigma,u_1$ is an $(s,u_1)$-path of $G\del S$. A contradiction.
\end{proof}

\begin{lemma}\label{contraction-inner2} Let $G$ be a simple $3$-connected graph with an edge $x$ such that $G/x$ is $3$-connected and simple with a simple minor $H$. Suppose that $F$ is an $H$-inner fan of $G/x$. Consider the labels for a maximal wye-to-wye fan $F^+$ of $G$ containing $F$ as in Figure \ref{pic-wye}. If $F$ is a fan of $G$, then one of the following alternatives holds:
\begin{enumerate}
	\item [(a)] $F$ is an $H$-inner fan of $G$ or
	\item [(b)] $G$ contains an edge $y$ such that one of $x,y,x_0,y_1,x_1\dots,y_n,x_n$ or $x_0,y_1,x_1\dots,y_n,x_n,y,x$ is the fan ordering of a maximal wye-to-wye fan of $G$ containing an $H$-inner fan of $G$.
\end{enumerate}
\end{lemma}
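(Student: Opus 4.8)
The plan is to study the two contractions $G/F$ and $(G/F)/x$ in tandem. As contraction of edge sets is commutative, $(G/F)/x=(G/x)/F$, and since $F$ is an $H$-inner fan of $G/x$ this graph is $3$-connected with an $H$-minor; being a contraction of $G/F$, it shows that $G/F$ already has an $H$-minor. Moreover, as $G/x$ is simple, $x$ is not a chord joining two fan vertices (two fan endvertices $v_i,v_j$ would make the spokes $uv_i,uv_j$ parallel in $G/x$), so $x$ survives as a genuine edge of $G/F$. Everything now turns on whether $G/F$ is $3$-connected. If it is, then, as $F^+$ is a maximal wye-to-wye fan of $G$ with inner fan $F$ and $G/F$ is $3$-connected with an $H$-minor, $F$ is by definition an $H$-inner fan of $G$ (Lemma \ref{not-wheel} ensuring that $F$ is genuinely inner, i.e.\ that $G$ is not forced to be a wheel); this is alternative (a).

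The substance of the lemma is therefore the case where $J:=G/F$ is not $3$-connected. Being a contraction of the $3$-connected graph $G$ along the connected subgraph $F$, $J$ is $2$-connected, while $J/x$ is $3$-connected, so Lemma \ref{deg2} yields an endvertex $v$ of $x$ with exactly two neighbors in $J$. I would then locate $v$: because every vertex of $G$ has degree at least $3$ and contracting $F$ identifies $u,v_1,\dots,v_n$ into one vertex, a vertex can drop to degree $2$ in $J$ only when sufficiently many of its neighbors were fan vertices. Combining this count with the hypotheses that $F$ is a fan of $G$ and that $F^+$ is maximal should force $v$ to be a rim endpoint, say $v_0$ (the end $v_{n+1}$ producing the mirror ordering in (b)), and should force the spoke $y:=uv_0$ to be an edge of $G$ with $x$ incident to $v_0$. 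The degenerate possibility that $J$ has at most three vertices is separated off here, since then Lemma \ref{G is a wheel} shows that $G$ is a wheel and the conclusion can be verified directly.

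With $y=uv_0$ in hand, $\{y,x_0,y_1\}$ induces a triangle and $\{x,y,x_0\}$ a wye, so that $x,y,x_0,y_1,x_1,\dots,y_n,x_n$ (or its mirror at the other end) satisfies the fan axioms (F1)--(F3) and is a wye-to-wye fan, whose maximality one then checks. To establish that its inner fan is an $H$-inner fan of $G$, I would transfer $3$-connectivity from the known $3$-connected graph $(G/x)/F$ to the contraction of the new inner fan by repeated use of Whittle's Lemmas \ref{w38-g} and \ref{w37-g}, in the spirit of Corollaries \ref{cont-biweb} and \ref{cont-fan}, again using Lemma \ref{not-wheel} to exclude a wheel; the $H$-minor is inherited from $(G/F)/x$ throughout. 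This gives alternative (b).

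The hard part will be the middle step: correctly pinning down the degree-$2$ vertex of $J$ and showing that, under the standing hypotheses, the failure of $3$-connectivity of $G/F$ can be caused only by the configuration of (b). The delicate issue is the interaction with the maximality of $F^+$: the new triangle $\{y,x_0,y_1\}$ at the hub only yields a triangle-to-wye fan of $G$ on its own, and it is precisely the contracted edge $x$ that supplies the rim edge upgrading it to the longer wye-to-wye fan of (b)---an extension that is invisible inside $G/x$, where $x$ has been contracted away. Carrying out this bookkeeping while simultaneously tracking the $H$-minor is where the real work lies.
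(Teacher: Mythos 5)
Your opening and closing moves are sound: the reduction of alternative (a) to the $3$-connectivity of $G/F$ via Lemma \ref{not-wheel} is exactly how the paper begins, and the target configuration you anticipate for (b) --- a new spoke $y$ at a rim end $v_0$ or $v_{n+1}$ with $x$ hanging off that end as a rim edge --- is the correct one. But the crux of the lemma is deferred, not proved. Applying Lemma \ref{deg2} to $J:=G/F$ only tells you that \emph{some} endvertex of $x$ has exactly two neighbors in $J$; your claim that counting degrees ``should force'' this vertex to be a rim endpoint with the spoke $y=uv_0$ present is precisely the content of the lemma, and none of the competing configurations is ruled out: the special endvertex could a priori be the contracted vertex $v_F$ itself (if $x$ is incident to a fan vertex), or a vertex off the fan whose neighborhood collapses in $J$ merely because it meets $V(F)$ in two or more vertices; the degenerate case of a degree-$3$ hub must also be dispatched (the paper kills it with Corollary \ref{cont-triweb}, which makes $G/F$ $3$-connected outright). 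Moreover, even the applicability of Lemma \ref{deg2} is not secured: your assertion that $J$ is $2$-connected because it is a contraction of a $3$-connected graph along a connected subgraph is false as a general principle, since $J\setminus v_F=G\setminus V(F)$ can be disconnected (a vertex all of whose neighbors lie in $V(F)$ would be isolated there); here $2$-connectivity must be extracted from the $3$-connectivity of $(G/x)/F$, an argument you do not give. Note also that $J$ is typically non-simple, whereas the structural lemmas you invoke are stated for simple graphs.

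The paper avoids this open-ended analysis by never confronting $G/F$ directly. It first shrinks the fan: iterating Corollary \ref{cont-fan} yields the simple $3$-connected $G_1$ with a two-spoke fan; if $\deg_{G_1}(u)=3$, Corollary \ref{cont-triweb} contradicts the assumption that $G/F$ is not $3$-connected; otherwise Corollary \ref{cont-biweb} yields the simple $3$-connected $G_2$ carrying the single-spoke fan $x_0,y_n,x_n$. In $G_2$ one has $G_2/x,y_n=G/(E(F)\cup x)$ $3$-connected while $G_2/y_n=G/F$ is not, so a single application of Lemma \ref{w36-g} --- which is exactly your ``locate the degree-$2$ vertex'' step, but proved once and for all in a simple $3$-connected graph --- delivers simultaneously the triangle $\{x_n,y_n,y\}$ and the wye $\{x_n,y,x\}$, i.e., the new spoke $y$ and the position of $x$ at the fan end. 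The $3$-connectivity of the contraction of the extended inner fan then comes from one use of Lemma \ref{w37-g}, namely $G/(E(F)\cup\{x_n,y\})=G_2/x_n,y_n,y$ with $\si(G_2/x_n,y_n,y)\cong G_2/x,y_n$, not from the repeated Whittle-style transfer you sketch. Without this (or an equivalent) reduction, your plan amounts to reproving Lemma \ref{w36-g} inside a non-simple, merely $2$-connected graph, which is substantially harder than the lemma you are trying to establish; as written, the proposal has a genuine gap at its central step.
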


\begin{proof} Since $F$ is a fan of $G$, then $F^+$ is a wye-to-wye fan of $G$. By Lemma \ref{not-wheel}, (a) holds if $G/F$ is $3$-connected. So, assume that $G/F$ is not $3$-connected. By Corollary \ref{cont-fan} used iteratively, $G_1:=G/x_2,\dots,x_{n-1}\del y_2,\dots,y_{n-1}$ is simple and $3$-connected. If $\deg_{G_1}(u)=3$, then $G_1/y_1,x_1,y_n=G/F$ is $3$-connected by Corollary \ref{cont-triweb}, a contradiction. Thus, $\deg_{G_1}(u)\ge 4$ and, by Corollary \ref{cont-biweb}, $G_2:=G_1/x_1\del y_1$ is $3$-connected and simple. 

Note that $G_2/x,y_n=G/F\u x$ is $3$ connected, but $G_2/y_n=G/F$ is not $3$-connected. By Lemma \ref{w36-g}, $G_2$ has a wye $Y$ and a triangle $T$ such that $E(T)-E(Y)=\{y_n\}$ and $x\in E(Y)$. But $y_n$ is in the wye induced by $\{x_0,x_n,y_n\}$ in $G_2$. So, we may assume without losing generality that $x_n\in T$ and, therefore, $E(T)=\{x_n,y_n,y\}$, where $y=uv_0$ in $G_2$. Note that $E(T)$ also induces a triangle in $G$. So $y=uv_{n+1}$ in $G$. Moreover, $x\neq y$ and, therefore, $x\in E(Y)-E(T)$. This implies that $x_0,y_1,\dots,y_n,x_n,y,x$ is a wye-to-wye fan of $G$.  To conclude (b) we have to check that $G/(E(F)\u\{x_n,y\})=G_2/x_n,y_n,y$ is $3$-connected. By Lemma \ref{w37-g},  $G_2/x,y_n\cong\si(G_2/y,y_n)\cong \si(G_2/x_n,y_n,y)$. As  $G_2/x,y_n=G/F\u x$, then $G_2/x_n,y_n,y$ is $3$-connected and the lemma is valid.
\end{proof}

\begin{lemma}\label{deg3 hub}
Let $G$ be a $3$-connected simple graph with $|G|\ge 4$ and with an edge $x$ such that $G/x$ is $3$-connected and simple. Suppose that $F$ is an inner fan of $G/x$ and $G$ is obtained from $G/x$ by splitting the hub of $F$. Consider the labels of a wye-to-wye fan $F^+$ of $G/x$ containing $F$ as in figure \ref{pic-wye}. If, for $k\in [n-1]$, $G/x_k$ is not $3$-connected, then $x_{k-1},y_k,x_k,y_{k+1},x_{k+1}$ is the fan ordering of a maximal wye-to-wye fan of $G$ with a degree-$3$ hub.
\end{lemma}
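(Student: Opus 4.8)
The plan is to deduce the structure by applying Lemma~\ref{deg2} to the graph $G/x_k$ with the edge $x$. Write $G_0:=G/x$, and recall that $G$ is obtained from $G_0$ by splitting the hub $u$ into two vertices $u_1,u_2$ joined by $x=u_1u_2$, distributing the edges of $E_{G_0}(u)$ between them; the spokes are the $y_i$ and the rim edges are $x_i=v_iv_{i+1}$. Since $G$ is $3$-connected and simple it has minimum degree at least $3$, so each of $u_1,u_2$ receives at least two edges of $E_{G_0}(u)$ besides $x$; hence $\deg_{G_0}(u)\ge 4$, a fact I will use below. As contracting $x_k=v_kv_{k+1}$ is disjoint from the splitting, we have $(G/x_k)/x=(G/x)/x_k=G_0/x_k$, and $G/x_k$ is $2$-connected because $G$ is $3$-connected. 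Thus, once I know that $G_0/x_k$ is $3$-connected, Lemma~\ref{deg2} applied to $G/x_k$ (whose contraction by $x$ is $3$-connected while $G/x_k$ itself is not) will hand me an endvertex of $x$ with exactly two neighbours in $G/x_k$.

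The crux is therefore to prove that $G_0/x_k$ is $3$-connected, and I would split according to $n$. If $n\ge 3$, then $F^+$ is a wye-to-wye fan of the $3$-connected simple graph $G_0$ with $1\le k\le n-1$, so Corollary~\ref{cont-fan} gives that $G_0/x_k\del y_k$ is $3$-connected; since in $G_0/x_k$ the spokes $y_k$ and $y_{k+1}$ become parallel edges, restoring $y_k$ only adds a parallel edge and preserves $3$-connectedness, so $G_0/x_k$ is $3$-connected. If $n=2$, then the inner fan $F=\{y_1,x_1,y_2\}$ is the triangle $T=uv_1v_2$ with $G_0/T$ $3$-connected, and $x_k=x_1$ is the edge of $T$ opposite to $u$; Corollary~\ref{opposite vertex} then forces $G_0/x_1$ to be $3$-connected unless $\deg_{G_0}(u)=3$, and the latter is ruled out by the bound $\deg_{G_0}(u)\ge 4$ noted above.

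With $G_0/x_k$ $3$-connected, Lemma~\ref{deg2} yields an endvertex of $x$, say $u_1$, having exactly two neighbours in $G/x_k$. I would then run the degree bookkeeping: since $u_1$ is always adjacent to $u_2$ and $G$ has minimum degree at least $3$, the only way that contracting $x_k$ can drop $u_1$ to two neighbours is that $u_1$ is adjacent in $G$ to both $v_k$ and $v_{k+1}$ and has no further neighbour, that is, $\deg_G(u_1)=3$ and $N_G(u_1)=\{u_2,v_k,v_{k+1}\}$. As $G_0$ is simple, each of $v_k,v_{k+1}$ has a single edge to the hub, so these edges must be the spokes; hence $y_k=u_1v_k$ and $y_{k+1}=u_1v_{k+1}$, and neither $v_k$ nor $v_{k+1}$ is adjacent to $u_2$.

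Finally I would verify the fan and its maximality. The triples $\{x_{k-1},y_k,x_k\}$ and $\{x_k,y_{k+1},x_{k+1}\}$ induce wyes centred at $v_k$ and $v_{k+1}$, while $\{y_k,x_k,y_{k+1}\}$ induces the triangle $u_1v_kv_{k+1}$, so $x_{k-1},y_k,x_k,y_{k+1},x_{k+1}$ is a wye-to-wye fan of $G$ whose hub $u_1$ has degree $3$. For maximality, any wye-to-wye fan containing it has hub $u_1$, and extending it at either end would require a spoke from $u_1$ to $v_{k-1}$ or to $v_{k+2}$, i.e.\ an edge absent since $N_G(u_1)=\{u_2,v_k,v_{k+1}\}$; the only remaining edge at $u_1$ is $x=u_1u_2$, and it cannot serve as a further spoke because forming the triangle needed to incorporate it would require $u_2$ to be adjacent to $v_k$ or $v_{k+1}$, which we ruled out. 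I expect the main difficulty to lie in the $3$-connectivity step for $G_0/x_k$—in particular the small case $n=2$ and the careful treatment of the parallel spokes when $n\ge 3$—together with the degenerate configurations (such as $G_0$ a wheel, where the rim ends may coincide), which should be dispatched by direct inspection.
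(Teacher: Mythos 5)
Your proof is correct, but it reaches the conclusion by a genuinely different route than the paper's. Both arguments open identically (splitting the hub forces $\deg_{G/x}(u)\ge 4$) and both pivot on the same fact, namely that $G/x,x_k$ is $3$-connected while $G/x_k$ is not; the differences lie in how each half is handled. For the $3$-connectedness of $(G/x)/x_k$, the paper uses Corollary~\ref{cont-biweb} in one stroke (a wye through the consecutive spokes $y_k,y_{k+1}$ would force the hub to have degree $3$, contradicting $\deg_{G/x}(u)\ge4$), whereas you case-split: Corollary~\ref{cont-fan} plus the observation that $y_k$ returns only as a parallel edge when $n\ge3$, and Corollary~\ref{opposite vertex} when $n=2$ --- both valid, though the paper's route avoids the split. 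For the endgame, the paper applies Lemma~\ref{w36-g} to the pair $(x,x_k)$ in $G$, obtaining abstractly a triangle $T$ and wye $Y$ with $E(T)-E(Y)=\{x_k\}$ and $x\in E(Y)$, and must then identify them: since $G/x$ is simple, $x$ lies in no triangle, so $T$ is a triangle of $G/x$, and a uniqueness argument (using $|G/x|\ge5$) forces $E(T)=\{x_k,y_k,y_{k+1}\}$ and hence $Y=\{x,y_k,y_{k+1}\}$. You instead apply Lemma~\ref{deg2} directly to $G/x_k$ with the edge $x$ --- legitimate, since $(G/x_k)/x=(G/x)/x_k$ is $3$-connected and $G/x_k$ is $2$-connected --- and, because the splitting structure is known in advance (the endvertices of $x$ are $u_1,u_2$ and contracting $x_k$ merges only $v_k,v_{k+1}$), the neighbour bookkeeping pins down $N_G(u_1)=\{u_2,v_k,v_{k+1}\}$ with no triangle-identification step; in effect you have inlined, specialized to this situation, the splitting analysis that the paper packages once and for all in the proof of Lemma~\ref{w36-g}. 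What each buys: the paper's version has a shorter endgame because \ref{cont-biweb} and \ref{w36-g} do the work; yours is more elementary and self-contained, at the price of the $n=2$/$n\ge3$ split and an explicit maximality check. On that last point, your assertion that any wye-to-wye fan containing $x_{k-1},y_k,x_k,y_{k+1},x_{k+1}$ must have hub $u_1$ deserves one more line (the triangle $u_1v_kv_{k+1}$ of a containing fan must contain its hub, and $v_k$, $v_{k+1}$ are excluded since they have degree $3$ in $G$ and $u_2$ is adjacent to neither), but this is the same level of brevity at which the paper itself closes its proof.
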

\begin{proof}Suppose the contrary. Since $G$ is obtained for $G/x$ by splitting the hub $u$ of $F$, then $\deg_{G/x}(u)\ge 4$. This implies that $|G/x|\ge 5$. By Corollary \ref{cont-biweb}, $G/x,x_k$ is $3$-connected. By Lemma \ref{w36-g}, there is a wye $Y$ of $G$ meeting a triangle $T$ such that $x\in E(Y)$ and $E(T)-E(Y)=\{x_k\}$. As $G/x$ is $3$-connected and simple, $x$ is in no triangle of $G$, and, therefore, $T$ is a triangle of $G/x$. As $G/x$ is $3$-connected with $|G/x|\ge 5$ and $x_k$ is in the rim of a fan of $G$, then it is straightforward to verify that $T$ is the unique triangle of $G/x$ containing $x_k$. Then, $E(T)=\{x_k,y_k,y_{k+1}\}$. As a consequence, $Y=\{x,y_k,y_{k+1}\}$ and the lemma holds.
\end{proof}

\begin{lemma}\label{contraction-inner}Let $G$ be a $3$-connected simple graph, other than a wheel, not isomorphic to to $\Pi_3$ and with an edge $x$ such that $G/x$ is $3$-connected and simple with a simple minor $H$. Suppose that $F$ is a non-degenerated $H$-inner fan of $G/x$ but $G[E(F)]$ is not a fan of $G$. Then $E(F)$ contains a free $H$-fan family $\X$ of $G$ such that:
	\begin{enumerate}
	    \item[(a)] $r_G(\X)=r_{G/x}(F)$,
	    \item[(b)] $\X\u\{\{x\}\}$ is a free-family of $G$ and
	    \item[(c)] one of the members of $\X$ contains an edge incident to the hub of $F$ in $G/x$ and the other members are singleton sets in the rim of $F$.
	\end{enumerate}
Moreover, $G[E(F)]$ contains at most one triangle $T$ with three degree-$3$ vertices and when such triangle exists it is a member of $\X$.
\end{lemma}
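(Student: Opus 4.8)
The plan is to analyse how $G$ is recovered from $G/x$ by a single vertex split and to read $\X$ off the resulting local picture. Fix a maximal wye-to-wye fan $F^+$ of $G/x$ containing $F$, labelled $x_0,y_1,x_1,\dots,y_n,x_n$ as in Figure~\ref{pic-wye}, with hub $u$, spokes $y_i=uv_i$ and rim $v_0v_1\cdots v_{n+1}$; thus $E(F)=\{y_1,\dots,y_n,x_1,\dots,x_{n-1}\}$ and $r_{G/x}(F)=n$. Since $G$ is simple and $x$ is a non-loop edge, $G$ is obtained from $G/x$ by splitting a single vertex $w$, with $x$ joining the two halves. First I would show $w\in\{u,v_1,\dots,v_n\}$: if $w$ were not a vertex of $G[E(F)]$, then no edge of $E(F)$ would change its endpoints and every wye or triangle of the fan ordering would persist, so $G[E(F)]$ would be a fan of $G$, contrary to hypothesis. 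Moreover the split at $w$ must genuinely separate the edges of $E(F)$ incident with $w$, for otherwise $G[E(F)]\cong F$ is again a fan. This gives the dichotomy $w=u$ (hub split) or $w=v_i$ (rim split).

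\emph{Hub split.} Here $u$ splits into $u_1,u_2$ with $x=u_1u_2$, and the spokes are distributed between the two halves, both halves receiving at least one spoke. Grouping the spokes into maximal blocks of consecutive spokes on a common half, a rim edge $x_i$ spans a triangle $\{y_i,x_i,y_{i+1}\}$ of $G$ exactly when it is interior to a block, and sits in a wye exactly at a block transition. By Lemma~\ref{deg3 hub}, $G/x_i$ fails to be $3$-connected only when $x_i$ is the central rim edge of a $5$-edge wye-to-wye fan of $G$ with a degree-$3$ hub; its inner triangle $T=\{y_i,x_i,y_{i+1}\}$ is then an $H$-inner fan by Lemma~\ref{not-wheel} once one checks $G/T$ is $3$-connected (immediate from Corollary~\ref{cont-triweb} in the three-degree-$3$ case that produces the ``moreover'' triangle). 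I would take $F'$ to be a single block, chosen to contain this $T$ when it exists, and otherwise any block for which $G/F'$ is $3$-connected (using Lemma~\ref{contraction-inner1} to secure a contractible spoke in the degenerate, triangle-free situation), and set
\[\X:=\{F'\}\cup\big\{\{x_i\}:x_i\text{ is a rim edge not in }F'\big\}.\]
The remaining work is threefold: each rim singleton $x_i$ is $H$-contractible (Lemma~\ref{deg3 hub} once $F'$ absorbs the unique exceptional edge, with $G/x_i$ retaining $(G/x)/F$, hence $H$, as a minor); $F'$ is an $H$-inner fan of $G$ (Corollary~\ref{cont-triweb} or Lemma~\ref{not-wheel}, the $H$-minor surviving because $G/F'$ has $G/(F\cup x)=(G/x)/F$ as a minor); and $\X$ is free. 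For freeness, a rim singleton can close a triangle in $G[\X]$ only with two spokes of its own block, and such a block is either contained in $F'$ or has both its spokes omitted from $\X$, so no circuit crosses members, Lemma~\ref{orthogonality} controlling the wye crossings. A vertex count gives $V(G[\X])=\{u_c,v_1,\dots,v_n\}$ for the hub-half $u_c$ of $F'$, whence $r_G(\X)=n=r_{G/x}(F)$, proving (a) and (c); and since the opposite hub-half is unspanned by $\X$, adjoining $x$ raises the rank, proving (b).

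\emph{Rim split.} If $w=v_i$, a degree count shows $v_i$ must have had an external neighbour in $G/x$ (a degree-$3$ rim vertex cannot be split into two degree-$3$ vertices separating its fan edges), and the split breaks the rim at $v_i$ into two sub-fans sharing the hub $u$. I would treat this as in Lemma~\ref{deletion-inner}, distinguishing whether $x$ meets one or two rim vertices and using Lemmas~\ref{w36-g}, \ref{w37-g}, \ref{w38-g}, Corollary~\ref{cont-biweb} and the expansion Lemma~\ref{expansion} to produce an $H$-inner fan $F'$ on one side of the break; the rim edges of the other side become $H$-contractible singletons and its spokes are omitted. The same counting separates the two sides by $x$ and yields (a)--(c).

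Finally, for the ``moreover'' clause: two triangles with three degree-$3$ vertices are disjoint, each consuming one hub-half entirely, which forces all spokes onto the two halves with no external hub edge and collapses $G$ to a wheel or to $\Pi_3$; as these are excluded, at most one such triangle exists and by construction it is the member $F'$ of $\X$. The main obstacle is exactly this coordinated bookkeeping: ensuring in \emph{every} sub-case that the one chosen $F'$ is genuinely $H$-inner --- that $G/F'$ is $3$-connected (where the wheel/$\Pi_3$ exclusion and Lemmas~\ref{not-wheel} and \ref{w38-g} are indispensable) and retains an $H$-minor --- while simultaneously the leftover rim edges are all $H$-contractible and assemble with $F'$ into a free family of rank exactly $r_{G/x}(F)$ with $\X\cup\{\{x\}\}$ still free.
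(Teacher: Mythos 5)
Your hub-split case collapses precisely where the paper's proof does its real work. You set $\X:=\{F'\}\cup\{\{x_i\}: x_i\notin F'\}$ and take $F'$ to be ``any block for which $G/F'$ is $3$-connected'' --- but the existence of such a block is the crux of the lemma, and the one tool you cite for it, Lemma \ref{contraction-inner1}, is inapplicable in general: its hypothesis requires $G[E(F)]$ to be triangle-free, which fails as soon as some block has two or more spokes (every such block is a triangle-to-triangle fan of $G$, hence made of triangles). So in the mixed case --- blocks containing triangles, none of which is a triangle with three degree-$3$ vertices --- your proposal gives no argument at all. The paper's proof handles exactly this by a reduction you do not perform: it contracts and deletes inside each block so that each $F_i$ is replaced by a single spoke $y_{t_i}$ (Corollary \ref{cont-fan}), forming a graph $G_1$ with $G_1/x$ $3$-connected; if $G_1$ is not $3$-connected, Lemma \ref{deg2} yields a degree-$2$ hub-half and hence an index $\alpha$ with $G/F_\alpha\cong(G/x)/X_\alpha\del Y_\alpha$ $3$-connected; if $G_1$ is $3$-connected, the reduced fan $F'$ of $G_1/x$ has, \emph{by construction}, none of its triangles being triangles of $G_1$ --- this is how the triangle-freeness hypothesis of Lemma \ref{contraction-inner1} is engineered --- which produces an $H$-contractible spoke $y_{t_\alpha}$ of $G_1$, and then $G/F_\alpha$ is rebuilt from $G_1/y_{t_\alpha}$ by repeated application of the expansion Lemma \ref{expansion}. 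None of this machinery (or any substitute for it) appears in your sketch, so the central existence claim is unsupported. A secondary soft spot in the same case: Lemma \ref{deg3 hub} returns a maximal $5$-edge fan with a degree-$3$ \emph{hub}, while Corollary \ref{cont-triweb} needs a triangle with \emph{three} degree-$3$ vertices; your parenthetical treats only the latter sub-case and leaves the former dangling.

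Your rim-split branch is also not a proof as written. The paper (here and again in its proof of Lemma \ref{resolve-tudo}) takes the position that if $G[E(F)]$ is not a fan of $G$ then the split occurs at the hub, and its entire block decomposition is predicated on that; you instead admit a split at a rim vertex $v_i$ and dispose of it with ``treat as in Lemma \ref{deletion-inner}''. But Lemma \ref{deletion-inner} concerns the situation where $G\del x$ is $3$-connected and the fan survives intact in $G$ with $x$ an extra edge attached to its rim; in your rim-split scenario the fan itself is torn apart at $v_i$ and only $G/x$ is assumed $3$-connected, so neither the statement nor the case analysis of \ref{deletion-inner} transfers, and conclusions (a)--(c) (in particular that one member of $\X$ contains an edge at the hub and all others are rim singletons) and the moreover clause would each need fresh verification that you do not supply. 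Either you must show that a rim split cannot occur under the hypotheses --- which is what would align you with the paper --- or you must genuinely carry out that branch; as it stands this is a second gap.
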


\begin{figure}\centering
\begin{minipage}{9.5cm}\centering\caption{}\label{pic-cont-1} 
Labeling of $G$ in Lemma \ref{contraction-inner} ($m=5$).
\begin{tikzpicture}\centering[scale=1.4]
\tikz[label distance=0mm];
\tikzstyle{node_style} = [shape = circle,fill = black,minimum size = 2pt,inner sep=2pt]
\tikzstyle{node_style2} = [shape = circle,fill = black,minimum size = 0.5pt,inner sep=0.5pt]
\fill[gray!20,opacity=0.3] (162:4cm) -- (-1,0) -- (146:4cm)-- (150:4cm)--(154:4cm)--(158:4cm)-- cycle;
\fill[gray!20,opacity=0.3] (130:4cm) -- (1,0) -- (114:4cm)--(118:4cm) --(122:4cm)--(126:4cm) --cycle;
\fill[gray!20,opacity=0.3] (98:4cm) -- (-1,0) -- (82:4cm) --(86:4cm) --(90:4cm)-- (94:4cm)-- cycle;
\fill[gray!20,opacity=0.3] (66:4cm) -- (1,0) -- (50:4cm) --(54:4cm) --(58:4cm)--(62:4cm)--cycle;
\fill[gray!20,opacity=0.3] (34:4cm) -- (-1,0) -- (18:4cm) -- (22:4cm)--(26:4cm)--(30:4cm)--cycle;
\node[node_style] (vn+1) at (2:4cm) {};	\node[node_style] (t5) at (18:4cm) {};		\node[node_style2] (5v3) at (22:4cm) {};	\node[node_style2] (5v2) at (26:4cm) {};	\node[node_style2] (5v1) at (30:4cm) {};
\node[node_style] (s5) at (34:4cm) {};	\node[node_style] (t4) at (50:4cm) {};		\node[node_style2] (4v3) at (54:4cm) {};	\node[node_style2] (4v2) at (58:4cm) {};	\node[node_style2] (4v1) at (62:4cm) {};
\node[node_style] (s4) at (66:4cm) {};	\node[node_style] (t3) at (82:4cm) {};		\node[node_style2] (3v3) at (86:4cm) {};	\node[node_style2] (3v2) at (90:4cm) {};	\node[node_style2] (3v1) at (94:4cm) {};
\node[node_style] (s3) at (98:4cm) {};	\node[node_style] (t2) at (114:4cm) {};		\node[node_style2] (2v3) at (118:4cm) {};	\node[node_style2] (2v2) at (122:4cm) {};	\node[node_style2] (2v1) at (126:4cm) {};
\node[node_style] (s2) at (130:4cm) {};	\node[node_style] (t1) at (146:4cm) {};		\node[node_style2] (1v3) at (150:4cm) {};	\node[node_style2] (1v2) at (154:4cm) {};	\node[node_style2] (1v1) at (158:4cm) {};
\node[node_style] (s1) at (162:4cm) {};	\node[node_style] (v0) at (178:4cm) {};
\node[node_style] (u1) at (-1,0) {};	\node[node_style] (u2) at (1,0) {};
\draw(u1)--(u2);	\draw(v0)--(s1);	\draw(t1)--(s2);	\draw(t2)--(s3);	\draw(t3)--(s4);	\draw(t4)--(s5);	\draw(t5)--(vn+1);	\draw (s1) -- (u1) -- (t1);	\draw (s2) -- (u2) -- (t2);	\draw (s3) -- (u1) -- (t3);	\draw (s4) -- (u2) -- (t4);	\draw (s5) -- (u1) -- (t5);
\draw[style=help lines] (s1)--(1v1)--(u1)--(1v2)--(u1)--(1v3)--(t1)--(1v3)--(1v2)--(1v1);
\draw[style=help lines] (s2)--(2v1)--(u2)--(2v2)--(u2)--(2v3)--(t2)--(2v3)--(2v2)--(2v1);
\draw[style=help lines] (s3)--(3v1)--(u1)--(3v2)--(u1)--(3v3)--(t3)--(3v3)--(3v2)--(3v1);
\draw[style=help lines] (s4)--(4v1)--(u2)--(4v2)--(u2)--(4v3)--(t4)--(4v3)--(4v2)--(4v1);
\draw[style=help lines] (s5)--(5v1)--(u1)--(5v2)--(u1)--(5v3)--(t5)--(5v3)--(5v2)--(5v1);
\node(lvn+1) at (2:4.5cm) {$v_{n+1}$};	\node(lt5) at (18:4.35cm) {$v_{t_5}$};	\node(ls5) at (34:4.35cm) {$v_{s_5}$};	\node(lt4) at (50:4.35cm) {$v_{t_4}$};	\node(ls4) at (66:4.35cm) {$v_{s_4}$};	\node(lt3) at (82:4.35cm) {$v_{t_3}$};	\node(ls3) at (98:4.35cm) {$v_{s_3}$};	\node(lt2) at (114:4.35cm) {$v_{t_2}$};	\node(ls2) at (130:4.35cm) {$v_{s_2}$};	\node(lt1) at (146:4.35cm) {$v_{t_1}$};	\node(ls1) at (162:4.35cm) {$v_{s_1}$};	\node(lv0) at (178:4.35cm) {$v_0$};	\node (lu1) at (-1,-0.3) {$u_1$};	\node (lu2) at (1,-0.3) {$u_2$};
\node at (-3,1.4) {$F_1$};	\node at (-1.7,3) {$F_2$};	\node at (-0.2,3.2) {$F_3$};	\node at (1.9,2.7) {$F_4$};	\node at (3,1.5) {$F_5$};
\end{tikzpicture}\end{minipage}
\begin{minipage}{6.5cm}\centering\caption{}\label{pic-cont-2} 
Graph $G_1$ of Lemma \ref{contraction-inner} ($m=5$)
\begin{tikzpicture}[scale=0.9]
\tikz[label distance=0mm];
\tikzstyle{node_style} = [shape = circle,fill = black,minimum size = 2pt,inner sep=2pt]
\tikzstyle{node_style2} = [shape = circle,fill = black,minimum size = 0.5pt,inner sep=0.5pt]
\node[node_style] (vn+1) at (0:3cm) {};	\node[node_style] (t5) at (30:3cm) {};	\node[node_style] (t4) at (60:3cm) {};	\node[node_style] (t3) at (90:3cm) {};	\node[node_style] (t2) at (120:3cm) {};	\node[node_style] (t1) at (150:3cm) {};	\node[node_style] (v0) at (180:3cm) {};	\node[node_style] (u1) at (-0.7,0) {};	\node[node_style] (u2) at (0.7,0) {};	
\node (lvn+1) at (0:3.6cm) {$v_{n+1}$};	\node (lt5) at (30:3.3cm) {$v_{t_5}$};	\node (lt4) at (60:3.3cm) {$v_{t_4}$};	\node (lt3) at (90:3.3cm) {$v_{t_3}$};	\node (lt2) at (120:3.4cm) {$v_{t_2}$};	\node	(lt1) at (150:3.4cm) {$v_{t_1}$};	\node (lv0) at (180:3.4cm) {$v_{0}$};	\node (lu1) at (-0.7,-0.3) {$u_1$};	\node (lu2) at (0.7,-0.3) {$u_2$};
\draw(u1)--(u2);	\draw (v0)--(t1)--(t2)--(t3)--(t4)--(t5)--(vn+1);	\draw (t1)--(u1)--(t3)--(u1)--(t5);	\draw (t2)--(u2)--(t4);	\node (lt5) at (25:2.2cm) {$y_{t_5}$};	\node (lt4) at (50:2.3cm) {$y_{t_4}$};	\node (lt3) at (88:2.1cm) {$y_{t_3}$};	\node (lt2) at (122:2.1cm) {$y_{t_2}$};	\node (lt1) at (159:2cm) {$y_{t_1}$};
\end{tikzpicture}\end{minipage}\hfill
\begin{minipage}{6.5cm}\centering\caption{}\label{pic-cont3} 
Graph $G_2$ of Lemma \ref{contraction-inner}\\ ($m=5$, $\alpha=3$)
\begin{tikzpicture}[scale=0.9]
\tikz[label distance=0mm];
\tikzstyle{node_style} = [shape = circle,fill = black,minimum size = 2pt,inner sep=2pt]
\tikzstyle{node_style2} = [shape = circle,fill = black,minimum size = 0.5pt,inner sep=0.5pt]
\node[node_style] (vn+1) at (0:3cm) {};	\node[node_style] (t5) at (30:3cm) {};	\node[node_style] (t4) at (60:3cm) {};	\node[node_style] (t2) at (120:3cm) {};	\node[node_style] (t1) at (150:3cm) {};	\node[node_style] (v0) at (180:3cm) {};	\node[node_style] (u1) at (-0.7,0) {};	\node[node_style] (u2) at (0.7,0) {};
\node (lvn+1) at (0:3.6cm) {$v_{n+1}$};	\node (lt5) at (30:3.3cm) {$v_{s_5}$};	\node (lt4) at (60:3.3cm) {$v_{s_4}$};	\node (lt2) at (120:3.4cm) {$v_{t_2}$};	\node (lt1) at (150:3.4cm) {$v_{t_1}$};	\node (lv0) at (180:3.4cm) {$v_{0}$};	\node (lu1) at (-0.7,-0.3) {$u_1$};	\node (lu2) at (0.7,-0.3) {$u_2$};
\draw(u1)--(u2);	\draw (v0)--(t1)--(t2)--(u1)--(t1)--(u1)--(t4);	\draw (t5)--(u1)--(t2)--(u2)--(t4)--(t5)--(vn+1);
\node (lt5) at (25:2.2cm) {$y_{s_5}$};
\node (lt4) at (49:2.3cm) {$y_{s_4}$};
%\node (lt3) at (88:2.1cm) {$y_{t_3}$};
\node (lt2) at (105:1.8cm) {$y_{t_2}$};
\node (lt1) at (159:2cm) {$y_{t_1}$};
\end{tikzpicture}\end{minipage}\hfill
\begin{minipage}{10cm}\centering\caption{}\label{pic-cont-4} 
Graph $(G/F_\alpha)$ of Lemma \ref{contraction-inner} ($m=5$, $\alpha=2$)
\begin{tikzpicture}\centering[scale=1.3]
\tikz[label distance=0mm];
\tikzstyle{node_style} = [shape = circle,fill = black,minimum size = 2pt,inner sep=2pt]
\tikzstyle{node_style2} = [shape = circle,fill = black,minimum size = 0.5pt,inner sep=0.5pt]
\fill[gray!20,opacity=0.3] (162:4cm) -- (-1,0) -- (146:4cm)-- (150:4cm)--(154:4cm)--(158:4cm)-- cycle;
\fill[gray!20,opacity=0.3] (130:4cm) -- (1,0) -- (114:4cm)--(118:4cm) --(122:4cm)--(126:4cm) --cycle;
\fill[gray!20,opacity=0.3] (66:4cm) -- (1,0) -- (50:4cm) --(54:4cm) --(58:4cm)--(62:4cm)--cycle;
\fill[gray!20,opacity=0.3] (34:4cm) -- (-1,0) -- (18:4cm) -- (22:4cm)--(26:4cm)--(30:4cm)--cycle;
\node[node_style] (vn+1) at (2:4cm) {};	\node[node_style] (t5) at (18:4cm) {};	\node[node_style2] (5v3) at (22:4cm) {};	\node[node_style2] (5v2) at (26:4cm) {};	\node[node_style2] (5v1) at (30:4cm) {};
\node[node_style] (s5) at (34:4cm) {};	\node[node_style] (t4) at (50:4cm) {};	\node[node_style2] (4v3) at (54:4cm) {};	\node[node_style2] (4v2) at (58:4cm) {};	\node[node_style2] (4v1) at (62:4cm) {};
\node[node_style] (s4) at (66:4cm) {};
\node[node_style] (t2) at (114:4cm) {};		\node[node_style2] (2v3) at (118:4cm) {};	\node[node_style2] (2v2) at (122:4cm) {};	\node[node_style2] (2v1) at (126:4cm) {};
\node[node_style] (s2) at (130:4cm) {};	\node[node_style] (t1) at (146:4cm) {};	\node[node_style2] (1v3) at (150:4cm) {};	\node[node_style2] (1v2) at (154:4cm) {};	\node[node_style2] (1v1) at (158:4cm) {};
\node[node_style] (s1) at (162:4cm) {};	\node[node_style] (v0) at (178:4cm) {};
\node[node_style] (u1) at (-1,0) {};	\node[node_style] (u2) at (1,0) {};
\draw(u1)--(u2);	\draw(v0)--(s1);	\draw(t1)--(s2);	\draw(t2)--(u1);	\draw(u1)--(s4);	\draw(t4)--(s5);	\draw(t5)--(vn+1);	\draw (s1) -- (u1) -- (t1);	\draw (s2) -- (u2) -- (t2);	\draw (s4) -- (u2) -- (t4);	\draw (s5) -- (u1) -- (t5);
\draw[style=help lines] (s1)--(1v1)--(u1)--(1v2)--(u1)--(1v3)--(t1)--(1v3)--(1v2)--(1v1);
\draw[style=help lines] (s2)--(2v1)--(u2)--(2v2)--(u2)--(2v3)--(t2)--(2v3)--(2v2)--(2v1);
\draw[style=help lines] (s4)--(4v1)--(u2)--(4v2)--(u2)--(4v3)--(t4)--(4v3)--(4v2)--(4v1);
\draw[style=help lines] (s5)--(5v1)--(u1)--(5v2)--(u1)--(5v3)--(t5)--(5v3)--(5v2)--(5v1);
\node(lvn+1) at (2:4.5cm) {$v_{n+1}$};	\node(lt5) at (18:4.35cm) {$v_{t_5}$};	\node(ls5) at (34:4.35cm) {$v_{s_5}$};	\node(lt4) at (50:4.35cm) {$v_{t_4}$};	\node(ls4) at (66:4.35cm) {$v_{s_4}$};	\node(lt2) at (114:4.35cm) {$v_{t_2}$};	\node(ls2) at (130:4.35cm) {$v_{s_2}$};	\node(lt1) at (146:4.35cm) {$v_{t_1}$};	\node(ls1) at (162:4.35cm) {$v_{s_1}$};	\node(lv0) at (178:4.35cm) {$v_0$};	\node (lu1) at (-1,-0.3) {$u_1$};	\node (lu2) at (1,-0.3) {$u_2$};	\node at (-3,1.4) {$F_1$};	\node at (-0.7,1.8) {$F_2$};	\node at (1.9,2.7) {$F_4$};	 \node at (3,1.5) {$F_5$}; 
\end{tikzpicture}\end{minipage}\hfill
\end{figure}
\begin{proof}
	Consider a maximal wye-to-wye fan $F^+$ of $G/x$ containing $F$, labeled as in Figure \ref{pic-wye}. Since $G[E(F)]$ is not a fan of $G$, then $G$ is obtained from $G/x$ by splitting $u$ into vertices $u_1$ and $u_2$.  Let $F_1,\dots,F_m$ be the maximal subsets of $E(F)$ such that each $G[F_k]$ is a triangle-to-triangle fan of $G$ or $F_k$ is a singleton set with a spoke of $F$. Let $y_{s_k}$ and $y_{t_k}$ be the extreme spokes of $F_k$ with $s_k\le t_k$, which are incident to $v_{s_k}$ and $v_{t_k}$, respectively. Choose the labels in such a way that $k>l$ implies $s_k>s_l$ (this labeling is illustrated in Figure \ref{pic-cont-1}). First we check:
	
	\begin{rot}\label{t0}
	There is at most one index $k\in [m]$ such that $F_k$ is a triangle of $G$ with $3$ degree-$3$ vertices.
	\end{rot}
	\begin{rotproof}
	Suppose the contrary. Let $1\le i<j\le m$ be such indices. Say that $u_1$ is a vertex of $F_i$. So $E_G(u_1)=\{y_{s_i},y_{t_i},x\}$. Thus $u_2\in V(F_j)$. Analogously, $E_G(u_2)=\{y_{s_j},y_{t_j},x\}$. Thus $y_{s_i},y_{t_i},y_{s_j}$ and $y_{t_j}$ are the unique spokes of $F$ and $n=3$. Define $W:=\{u_1,u_2,v_1,v_2,v_3,v_4\}$. If $G$ has a vertex $v\in V(G)-W$, then it is clear that $v$ and $u_1$ are in different connected components of $G\del\{v_1,v_4\}$. Thus $V(G)=W$. Now it is straightforward to check that $G\cong\Pi_3$ or to $\mathcal{W}_4$. A contradiction to the hypothesis.
	\end{rotproof}

	\begin{rot}\label{t1}
	For some $\alpha\in[m]$, $G/F_\alpha$ is $3$-connected and each edge $z$ in the rim of $F$ and out of $E(F_\alpha)$ is $H$-contractible in $G$. 
	\end{rot}
	\begin{rotproof}We consider two cases for this.
	
	\emph{Case 1: For some $\alpha\in [m]$, $F_\alpha$ is a triangle with $3$ degree-$3$ vertices: } By Lemma \ref{cont-triweb}, $G/F_\alpha$ is $3$-connected. By \ref{t0}, each edge $z$ in the rim of $F$ and out of $F_k$ is not in a triangle with $3$ degree-$3$ vertices and, therefore, by Lemma \ref{deg3 hub}, $z$ is $H$-contractible in $G$. Note that the second part of the lemma is proved.
	
	\emph{Case 2: Otherwise: } By Lemma \ref{deg3 hub}, each edge in the rim of $G$ is $H$-contractible in $G$. We just have to find $\alpha\in [m]$ such that $G/F_\alpha$ is $3$-connected. For $k\in [m]$, define $Y_k:=\{y_{s_k},\dots, y_{t_k-1}\}$ and $X_k:=\{x_{s_k},\dots, x_{t_k-1}\}$. Moreover, let: $$G_1:=G/X_1\u\cdots\u X_m\del Y_1\u\cdots\u Y_m.$$
	
	Note that the unique edge of $G_1$ remaining from each $F_i$ is $y_{t_i}$ (see Figures \ref{pic-cont-1} and \ref{pic-cont-2}). Since $G[E(F)]$ is not a fan of $G$, then $m\ge 2$. This implies that $G_1/x$ is obtained from $G/x$ by repeatedly performing the operation of Corollary \ref{cont-fan}. Hence, $G_1/x$ is $3$-connected and simple. Now we split this case two into two subcases:
	
	\emph{Case 2.1: $G_1$ is not $3$-connected: } By Lemma \ref{deg2}, we may assume that $\deg_{G_1}(u_2)=2$ because $G_1/x$ is $3$-connected. Say that $y_{t_\alpha}$ is incident to $u_2$. Therefore $G/X_\alpha\del Y_\alpha$ has $u_2$ as a degree-$2$ vertex incident to $y_{t_\alpha}$ and $x$. Thus, $G/F_\alpha=G/X_\alpha\del Y_\alpha/y_{t_\alpha}\cong (G/x)/X_\alpha\del Y_\alpha$, which is $3$-connected by Corollary \ref{cont-fan}. So, we have the desired $\alpha$ in this case.%	Hence, $G_2:=G/x_{s_\alpha+1}\dots,x_{t_\alpha-1}\del y_{s_\alpha+1}\dots,y_{t_\alpha-1}$ has $T:=\{y_{s_\alpha},x_{s_\alpha},y_{y_k}\}$ as a triangle with $3$ degree-$3$ vertices. By Corollary \ref{cont-fan}, $G_2$ is $3$-connected and simple and, by Lemma \ref{cont-triweb}, so is $G_2/T$. But $G_2/T=G/F_\alpha$. So $G/F_\alpha$ is $3$-connected.
	
	\emph{Case 2.2: $G_1$ is $3$-connected: } Now, $F'_+:=x_ 0,y_{t_1},x_{t_1},y_{t_2}, \dots, x_{t_{m-1}},y_{t_m},x_n$ is a maximal wye-to-wye fan of $G_1/x$. Since $m\ge 2$, then $F'=y_{t_1},x_{t_1},y_{t_2}, \dots, x_{t_{m-1}},y_{t_m}$ is a maximal triangle-to-triangle fan of $G_1/x$ contained in $F'_+$ (see Figure \ref{pic-cont-2}). Since $G_1/F'\u x=G/F\u x$ is $3$-connected, then $F'$ is an $H$-inner fan of $G_1/x$ by Lemma \ref{not-wheel}.
	By construction, none of the edge-sets of triangles of $F'$ is the edge-set of a triangle of $G_1$, thus, by Lemma \ref{contraction-inner1}, for some $\alpha\in[m]$, $y_{t_\alpha}$ is $H$-contractible in $G_1$. Say that $y_{t_\alpha}$ is incident to $u_1$. Consider a graph $G_2$ obtained from $G_1/y_{t_\alpha}$ by changing the label of $y_{t_i}$ by $y_{s_i}$ for $i>\alpha$ (see Figure \ref{pic-cont3}). Now $G/F_\alpha$ can be rebuilt from $G_2$ using the operation described in Lemma \ref{expansion} (see Figures \ref{pic-cont3} and \ref{pic-cont-4}). Therefore, $G/F_\alpha$ is $3$-connected and \ref{t1} holds.\end{rotproof}
	
	%For $i\neq t_\alpha, s_\alpha-1$, we have that $x_i$ is not incident to $u_1$ in $G_3$. Note that $G/F_\alpha$ may be obtained from $G_3$ by putting back the edges of $X_k$ and $Y_k$ for $k\neq\alpha$ using the operation of Lemma \ref{expansion} using $u=u_1$ and $e=x_i$ with $i\neq x_{t_\alpha}, x_{s_\alpha-1}$. This implies that $G/F_\alpha$ is $3$-connect. So \ref{t1} holds.\end{rotproof}
	Now, by Lemma \ref{not-wheel}, $F_\alpha$ is an $H$-inner fan of $G$. Let $$\X:=\big\{\{x_i\}:i\in[n-1]\text{ and }x_i\notin F_\alpha\big\}\u \{F_\alpha\}.$$
	Recall that, for $k\in [m]$, $x_k$ is $H$-contractible in $G$ if $x_k\notin F_\alpha$. Now, items (a), (b) and (c) are easy to verify.
\end{proof}

\section{Proofs for the Theorems}\label{sec-proofs}

In this section we prove theorems \ref{main-general}, \ref{main-sum} and \ref{main-r/2}. We define the vertex \defin{cleaving} operation as the inverse of the identification of non-adjacent vertices. The next lemma is the key for proving Theorems \ref{main-sum} and \ref{main-r/2}.

\begin{lemma}\label{resolve-tudo}
	Let $G$ and $H$ be $3$-connected simple graphs such that $H$ is a minor of $G$. Suppose that $G$ is not isomorphic to $\Pi_3$ and neither to a wheel. Suppose that $x$ is an edge of $G$ such that some $G'\in \{G/x,G\del x\}$ is $3$-connected with an $H$-minor and $\F$ is an $H$-fan family of $G'$ without crossing triangles. Then $G$ has an $H$-fan family $\X$ such that:
	\begin{enumerate}
		\item [(a)] $\X$ has no crossing triangles,
		\item [(b)] $\X$ is free if $\F$ is free and
		\item [(c)] $rs_G(\X)\ge rs_{G/x}(\F)+1$ if $G'=G/x$ and $rs_G(\X)\ge rs_{G}(\F)$ if $G'=G\del x$ .
	\end{enumerate}
\end{lemma}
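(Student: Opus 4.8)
The plan is to build $\X$ from $\F$ one member at a time, treating $G'=G\del x$ and $G'=G/x$ separately, and arranging that no member of $\F$ loses rank while, in the contraction case, a single $+1$ is extracted from $x$. In the deletion case $x$ belongs to no member of $\X$, so $G[\X]=(G\del x)[\X]$ and ranks agree in $G$ and in $G\del x$. A singleton $\{e\}$ lifts directly: $(G/e)\del x=(G\del x)/e$ is $3$-connected with an $H$-minor, and since adding an edge to a $3$-connected graph preserves $3$-connectivity (and $x$ is no loop of $G/e$ as $G$ is simple), $G/e$ is $3$-connected with an $H$-minor, so $e$ is $H$-contractible in $G$. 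A non-degenerated inner fan $M$ is replaced, via Lemma \ref{deletion-inner}, by a free $H$-fan family inside $E(M)$ of rank $r_G(E(M))=r_{G\del x}(E(M))$. Then $rs_G(\X)=rs_{G\del x}(\F)=rs_G(\F)$, which is the deletion part of (c). For (a) and (b) I would note that a crossing triangle, or a freeness-violating circuit, of $G[\X]$ is one of $(G\del x)[\F]$; if it meets two members of $\F$ it is a crossing triangle of $\F$, and if it lies in one member it is ruled out by the freeness of the sub-family supplied by Lemma \ref{deletion-inner}.

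The contraction case $G'=G/x$ is the crux. Its key feature is that, as $G/x$ is simple, the endvertices of $x$ share no neighbour, so $x$ lies in no triangle of $G$; hence adjoining $\{x\}$ as a singleton never produces a crossing triangle. I would first observe that every member of $\F$ avoiding the endvertices of $x$ lifts without change and without rank loss: a fan $M$ that is a fan of $G$ is covered by Lemma \ref{contraction-inner2}, whose alternative (b) needs $x$ at a rim end of $M$ and so cannot occur, forcing (a); and a singleton $\{e\}$ failing to stay $H$-contractible would, by Lemma \ref{w36-g}, produce a wye through $x$ centred at an endvertex of $x$ with $e$ incident there, which a far edge cannot satisfy. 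Thus only the member(s) incident to $x$ require attention, and the extra $+1$ must be drawn from $x$ at one of them.

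To secure the $+1$ I would distinguish according to whether $x$ can be adjoined as a fresh singleton. If the lifted members together with $\{x\}$ do not let $x$ close a circuit inside $\bigcup\X$—in particular whenever $\F$ is only required to be crossing-triangle-free—then I adjoin $\{x\}$: it is $H$-contractible in $G$, disjoint from everything, creates no crossing triangle, and contributes $r_G(\{x\})=1$, so $rs_G(\X)\ge rs_{G/x}(\F)+1$. When $\F$ is free and $x$ would close a circuit inside $\bigcup\X$, contracting $x$ turns that circuit into one of $(G/x)[\F]$, which by freeness sits inside a single member $M$. I then replace the lift of $M$ by the output of the matching contraction lemma: if $v_x$ is the hub of $M$, Lemma \ref{contraction-inner} supplies a free family inside $E(M)$ of rank $r_{G/x}(M)$ for which adjoining $\{x\}$ remains free, delivering the $+1$; if $M$ is a fan of $G$ with $x$ at a rim end, Lemma \ref{contraction-inner2}(b) absorbs $x$ into an enlarged inner fan of rank $r_{G/x}(M)+1$, again giving the $+1$ but without a separate singleton. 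A failing near singleton is handled the same way, its Lemma \ref{w36-g} wye and triangle fusing with $x$ into a short inner fan of larger rank.

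The main obstacle I anticipate is the global consistency of the free contraction case: one must verify that the circuit forcing a modification is genuinely confined to a single member (so that only one member receives special treatment and the $+1$ is counted exactly once), that the family produced by Lemma \ref{contraction-inner} or \ref{contraction-inner2} is simultaneously compatible with $\{x\}$ and with all untouched members, and that no new crossing triangle or freeness-violating circuit spans two members after the surgery. The localization—circuits through $x$ confine to one member by freeness, and $x$ lies in no triangle—is exactly what makes this tractable, while the hypotheses excluding wheels and $\Pi_3$ are what license the contraction lemmas and would be cleared away at the very start.
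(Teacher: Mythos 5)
Your deletion case is sound and essentially identical to the paper's (lift singletons directly, apply Lemma \ref{deletion-inner} to non-degenerated fans), but the contraction case has a genuine gap, and it sits exactly where the paper's proof does its real work. First, your reduction ``only the member(s) incident to $x$ require attention'' is false. When a singleton $\{e\}$ fails to stay $H$-contractible, Lemma \ref{w36-g} gives a wye $Y\ni x$ centred at an endvertex $u$ of $x$ and a triangle $T$ with $E(T)-E(Y)=\{e\}$; the edge $e$ is the side of $T$ \emph{opposite} $u$, so its endvertices are merely neighbours of $u$ and $e$ need not touch either endvertex of $x$ (in the paper's notation, $y^i_1$ joins $v_0$ to $v_1$, not $u_1$ or $u_2$). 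Likewise, alternative (b) of Lemma \ref{contraction-inner2} places $x$ at a vertex $v_0$ or $v_{n+1}$ of the \emph{maximal} fan $F^+$, which lies outside $V(F)$ itself, so a fan member vertex-disjoint from the endvertices of $x$ can still fail. Hence ``far'' members do not automatically lift, and several members can fail simultaneously (the paper's set $I$ can have size $2$, e.g.\ wyes at both endvertices of $x$).

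Second, and more seriously, you have no mechanism for collisions between repair edges and other members, which is the heart of the paper's argument. Each repair of a failing member $F_i$ adjoins edges $\chi(i),\psi(i)$ of $G$ forming a wye with $x$, and these edges may already lie inside \emph{other} members of $\F$: $\chi(i)$ can be a spoke of another fan $F_k$ (the paper's class $K$, whose lift must be demoted to rim singletons, \emph{losing} one unit of rank) or can itself be a singleton member (class $L$, which is dropped entirely). The inequality $rs_G(\X)\ge rs_{G/x}(\F)+1$ then holds only because of a delicate compensation: the paper proves (in its claim \therot-style steps \ref{new2}, \ref{new6} and \ref{new4}) that the no-crossing-triangle hypothesis forces $|\{\chi(i),\psi(i)\}\i F|\le 1$, that $|K|\le 1$ with strong structural control (the colliding $\chi(i)$ is a spoke, the relevant index lies in $I_2$, a degree bound $\deg_G(u_2)\ge 4$), and that there is an \emph{injection} $\Phi:K\u L\to I$, so that $|I|-|K|-|L|\ge 0$ and $\{x\}$ is adjoined precisely when equality holds. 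Your localization argument (``circuits through $x$ confine to one member; $x$ lies in no triangle'') addresses circuits, but the collision problem is about \emph{shared edges}, not circuits, and your ``adjoin $\{x\}$ and count $+1$'' step silently assumes all other members lift with full rank, which the $K$- and $L$-type losses contradict. The same gap also undermines disjointness of $\X$ and the no-crossing-triangle claim (the paper's \ref{new7} and \ref{new20}), both of which rely on the uniqueness statements of \ref{new4}. Without this bookkeeping, parts (a) and (c) of Lemma \ref{resolve-tudo} are unproven, and the freeness argument (b) you sketch inherits the problem since it presupposes the corrected family.
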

\begin{proof} Write $\F:=\{F_1,\dots,F_m\}$. We first make the proof in the simple case, when $G'=G\del x$. If $F_k$ is an $H$-inner fan of $G\del x$, then, by Lemma \ref{deletion-inner}, $F_k$ contains a free $H$-fan family $\F_k$ with rank $r_G(F_k)$. Otherwise, $F_k$ is singleton and contains an $H$-contractible element of $G\del x$ and, therefore, of $G$. In this case we define $\F_k:=\{F_k\}$. It is straightforward to check that $\X:=\F_1\u\cdots\u \F_m$ is the family we are looking for in this case. 

Now, assume that $G'=G/x$. Let $F$ be the union of the members of $\F$. Next, we define a partition $\{I_1,I_2,I_3,J_1,J_2,J_3,K,L\}$ of $[m]$ and families $\X_k$, for $k\in [m]-L$ as follows. First we will define the sets $I_1$, $I_2$ and $I_3$. For $i\in [m]$, we let:
\begin{itemize}
	\item $i\in I_1$ if $G/F_i$ is not $3$-connected and $F_i$ is a fan of $G$,
	\item $i\in I_2$ if $G/F_i$ is not $3$-connected, $|F_i|=1$ and $F_i$ is in a wye of $G$ and
	\item $i\in I_3$ if $G/F_i$ is not $3$-connected, $|F_i|=1$ and $F_i$ is not in a wye of $G$.
\end{itemize}
For $i\in I_1$, let $F^+$ be a wye-to-wye fan of $G$ containing $F_i$ with $|E(F^+_i)|-|E(F_i)|=2$. By Lemma \ref{contraction-inner2}, $G$ has an edge $\psi(i)$ such that, for some ordering $x^i_0,y^i_1,x^i_1,\dots,y^i_{n_i},x^i_{n_i}$ of $F^+_i$ and for $\chi(i):=x^i_0$, we have that $x,\psi(i),\chi(i),y^i_1,x^i_1,\dots,y^i_{n_i},x^i_{n_i}$ is a wye-to-wye fan of $G$ and $F'_i:=\psi(i),\chi(i),y^i_1,x^i_1,\dots,y^i_{n_i}$ is an $H$-inner fan of $G$. In this case, we define $\X_i:=\{F'_i\}$. Note that $G/\psi(i)$ is not $3$-connected for $i\in I_1$.

For $i\in I_2\u I_3$, we denote $F_i=\{y^i_1\}$. By Lemma \ref{w36-g}, as $x$ is in no triangle of $G$, there are edges $\chi(i)$ and $\psi(i)$ such that $\{x,\chi(i),\psi(i)\}$ induces a wye and $F'_i:=\{\chi(i),\psi(i), y^i_1\}$ induces a triangle of $G$. By Lemma \ref{w37-g}, $\si(G/F'_i)\cong \si(G/F_i\u x)$ is $3$-connected with an $H$-minor. 

If $i\in I_2$, then, by Lemma \ref{not-wheel}, $F'_i$ is an $H$-inner fan of $G$ and we define $\X_i:=\{F'_i\}$. Moreover, we pick the labels of $\chi(i)$ and $\psi(i)$ in such a way that $\chi(i)$ is in a wye of $G$ with $y^i_1$. In particular, this implies that $G/\psi(i)$ is not $3$-connected. 

If $i\in I_3$, then by, Corollary \ref{opposite vertex}, $\chi(i)$ and $\psi(i)$ are $H$-contractible in $G$. In this case, we define $\X_k:=\big\{\{\chi(k)\},\{\psi(k)\}\big\}$. We may pick the labels of $\chi(i)$ and $\psi(i)$ in such a way that $\psi(i)\notin F$ because of the following:

\begin{rot}\label{new2}
If $i\in  I$, then $|\{\chi(i),\psi(i)\}\i F|\le 1$
\end{rot}
\begin{rotproof}
If $\chi(i),\psi(i)\notin F_i$, then $F'_i$ is a crossing triangle of $\F$. This proves \ref{new2}.
\end{rotproof}

Moreover, as we observed before:
\begin{rot}\label{new1}
If $i\in I_1\u I_2$, then $G/\psi(i)$ is not $3$-connected.
\end{rot}

Define $I=I_1\u I_2\u I_3$. We defined functions $\psi,\chi:I\rightarrow E(G)$. For each $j\in [m]-I$ such that $F_j$ do not intersect $\psi(I)\u\chi(I)$, we let:

\begin{itemize}
\item $j\in J_1$ if $|F_j|=1$ and $G/F_j$ is $3$-connected,
\item $j\in J_2$ if $|F_j|>1$, $F_j$ is a fan of $G$ and $G/F_j$ is $3$-connected and
\item $j\in J_3$ if $|F_j|>1$ and $F_j$ is not a fan of $G$.
\end{itemize}

For $j\in J_1$, we simply define $\X_k:=\{F_j\}$.

If $j\in J_2$, then by Lemma \ref{not-wheel}, $F_j$ is an $H$-inner fan of $G$ and we define $\X_j:=\{F_j\}$.

For $j\in  J_3$, by Lemma \ref{contraction-inner}, there is a free $H$-fan family $\X_j$ of $G$ with the members contained in $F_j$ satisfying items (a), (b) and (c) of such lemma.

We define $J:=J_1\u J_2\u J_3$. For each $k\in [m]-I$ with $F_k$ meeting $\psi(I)\u\chi(I)$, we let:

\begin{itemize}
	\item $k\in K$ if $|F_k|>1$ and
	\item $k\in L$ if $|F_k|=1$.
\end{itemize}

For $k\in K$, we define $\X_k$ as the partition of the edge-set of the rim of $F_k$ in singleton sets. We will check on \ref{new4} (viii) that $\X_k$ is a free $H$-fan family of $G$.

We will not define $\X_k$ for $k\in L$. Observe that $\{I_1,I_2,I_3,J_1,J_2,J_3,K,L\}$ is indeed a partition of $[m]$. Moreover each $\X_i$ is a free $H$-fan family of $G$. Next we prove:

\begin{rot}\label{new6}
If $\{i,j\}$ is a $2$-subset of $I$, then $\{\chi(i),\psi(i)\}\i F'_j=\{\chi(i),\psi(i)\}\i(\{\chi(j),\psi(j)\}\u F_j)=\emp$.
\end{rot}
\begin{rotproof}
First we check that $\{\chi(i),\psi(i)\}\i\{\chi(j),\psi(j)\}=\emp$. Suppose the contrary. Then, for $k=i,j$, $Y_k:=\{\chi(k),\psi(k),x\}$ induces a wye of $G$. But this implies that $Y_i=Y_j$ since such wyes have a common pair of edges. Thus, $\{\chi(i),\psi(i)\}=\{\chi(j),\psi(j)\}$. Moreover, for $k=i,j$, $T_k:=G[\{\chi(k),\psi(k),y^k_1\}]$ is a triangle. As $G$ is simple, $y^i_1=y^j_1$ and, therefore, $F_i$ intersects $F_j$. A contradiction.

Now it is left to show that $\{\chi(i),\psi(i)\}\i F_j=\emp$. Suppose for a contradiction that  $z\in \{\chi(i),\psi(i)\}\i F_j$. As $G(F'_j)$ is a union of circuits of $G$, by Lemma \ref{orthogonality}, $Y_i$ meets $F'_j$ in at least two edges. As $x\notin F'_j$, then $\chi(i),\psi(i)\in F'_j$. As $\{\chi(i),\psi(i)\}\i\{\chi(j),\psi(j)\}=\emp$, thus $\{\chi(i),\psi(i)\}\cont F'_j-\{\chi(j),\psi(j)\}\cont F_j\cont F$. A contradiction to \ref{new2}.
\end{rotproof}

By \ref{new6}, for each $l\in L$ there is an unique index $\varphi(l)\in I$ such that $F_l$ is either equal to $\{\chi(\varphi(l))\}$ or $\{\psi(\varphi(l))\}$. This defines a function $\varphi:L\rightarrow I$. By \ref{new2}, $\varphi$ is injective. We will extend the domain of $\varphi$ to $L\u K$ further. Next we prove:

\begin{rot}\label{new4}
If $k\in K$, then there is an unique index $i\in I$ such that $\{\chi(i),\psi(i)\}$ meets $F_k$. Moreover:
\begin{itemize}
\item [(i)] $F_k$ is not a fan of $G$. In particular, $G$ is obtained from $G/x$ by splitting the hub of $F_k$. 
\item [(ii)] $i\in I_2$.
\item [(iii)] $\chi(i)$ is a spoke of $F_k$.%may say extreme spoke
\item [(iv)] $\psi(i)\notin F$.
\item [(v)] $\psi(I)\u \chi(I)$ meets no member of $\X_k$,%
\item [(vi)] $i\notin \varphi(L)$ and $|I|>|L|$.
\item [(vii)] $|K|=1$.
\item [(viii)] $\X_k$ is a free $H$-fan family of $G$.
\end{itemize}
\end{rot}
\begin{rotproof} By the definition of $K$, for some $i\in I$ there is an element $z\in \{\chi(i),\psi(i)\}\i F_k$. 

To prove (i), suppose for a contradiction that $F_k$ is a fan of $G$. Thus $F_k$ is an union of circuits of $G$. But $Y:=G[\{\chi(i),\psi(i),x\}]$ is a wye of $G$ meeting $F_k$ and, by Lemma \ref{orthogonality}, $Y$ meets $F_k$ in at least two edges. By \ref{new2}, $x\in F_k$, a contradiction. Thus, $F_k$ is not a fan of $G$. The second part of (i) follows straightforwardly from this fact. So, (i) holds.

Say that $G$ is obtained from $G/x$ by splitting the hub of $F_k$ into vertices $u_1$ and $u_2$ linked by $x$. Since $z$ is adjacent to $x$, then $z$ is a spoke of $F_k$ and we may assume that $z$ is incident to $u_1$. Let $v_1$ be the other endvertex of $z$ in $G$. Since $v_1$ is in the rim of $F_k$, then $E_G(v_1)$ induces a wye $Y_1$ of $G$ meeting the triangle induced by $F'_i:=\{\chi(i),\psi(i),y^i_1\}$. As $x$ and $v_1$ are not incident, then $x\notin E(Y_1)$ and $Y_1\neq Y$. So $Y$ and $Y_1$ are distinct wyes of $G$ meeting $F'_i$ and, therefore, $F'_i\cont E(Y)\u E(Y_1)$. Since $y^i_1$ is not adjacent to $x$, hence $y^i_1\in E(Y_1)=E_G(v_1)$. So, $v_1$ is incident to an edge out of $F_k$ and, consequently, $v_1$ is an extreme of the rim of $F_k$. Let $F^+_k$ be a wye-to-wye fan of $G/x$ containing $F_k$ labeled as in Figure \ref{pic-wye}. Note that $x_0=y^i_1$ is in the wye $Y_1$ of $G$, which is also a wye of $G/x$. Then $i\notin I_3$. Moreover, as $Y_1$ meets a triangle of $F_k$, then by Lemma \ref{w38-g}, $y^i_1$ is contractible in $G/x$. This implies that $i\notin I_1$. So, $i\in I_2$ and (ii) holds.

By Lemma \ref{w37-g}, $\si(G/x,y^i_1)\cong\si(G/F'_i)$ is $3$-connected. If $\deg_G(v_0)=3$, then $\deg_{G/x}(v_0)=3$ and we have a contradiction to the maximality of $F^+_k$ as a wye-to-wye fan of $G/x$. Thus $\deg_G(v_0)\neq 3$. By Lemma \ref{opposite vertex}, $G/z$ is $3$-connected. By (ii) and \ref{new1}, $z=\chi(i)$ and (iii) holds.

Note that (iv) follows directly from \ref{new2}. We checked that if $j\in I$ and $z'\in \{\psi(j),\chi(j)\}\i F_k$, then $z'=\chi(j)$ is a spoke of $F_k$ and $\psi(j)\notin F$. This implies (v) because the members of $\X_k$ are in the rim of $F_k$. By (iv), $\{\psi(i)\}\notin \F$. Moreover, $\{\chi(i)\}\notin \F$. Hence, there is no index $l\in L$ for which $i=\varphi(l)$. This implies that $\varphi$ is not surjective and (vi) holds.

Now we check that $\deg_G(u_2) \ge 4$. Since $E_G(u_1)=E(Y)$, $\deg_G(u_1)=3$. Moreover, $G[\{v_0,u_1,v_1\}]$ is a triangle. Suppose for a contradiction that $\deg_G(u_2)=3$. Hence, $N_G(u_2)=\{u_1,v_2,v_3\}$ and, for $X:=\{u_1,u_2,v_0,v_1,v_3,v_4\}$, $G[X]$ has as subgraph the graph in figure \ref{pic-new4}, where $v_1$, $v_2$, $u_1$ and $u_2$ have degree $3$ in $G$. As $G\del v_0,v_3$ is connected, then $V(G)=X$ and $G\cong \Pi_3$ or $\mathcal{W}_4$, a contradiction the hypothesis. So, $\deg_G(u_2) \ge 4$.

For proving (vii), suppose for a contradiction that $j\in K-k$. Note that $u_1$ is a degree-$3$ endvertex of $x$ in $G$ incident to $\psi(i)\notin F$ and to $\chi(i)\in F_k$. Analogously, for $j$, one endvertex $u$ of $x$ has degree $3$ and is incident to an edge of $F_j$ and an edge out of $F\u x$. Clearly, $u=u_2$. But this contradicts the fact that $\deg_G(u_2) \ge 4$.

For proving (viii) it is enough to check that each edge in the rim of $F$ is $H$-contractible in $G$. As $\deg_G(u_2) \ge 4$, it follows from Lemma \ref{deg3 hub}.

It is left to prove the uniqueness of $i$. Suppose for a contradiction that, for some $j\in I-i$. As $E_G(u_1)=\{x,\psi(i),\chi(i)\}$, analogously, for $j$, one endvertex $w$ of $x$ satisfies $N_G(w)=\{x,\psi(j),\chi(j)\}$. By \ref{new6}, $u=u_2$. But this contradicts the fact that $\deg_G(u_2) \ge 4$.
\end{rotproof}

\begin{figure}
\begin{minipage}{7cm}\centering\caption{}\label{pic-new4}
\begin{tikzpicture}[scale=2]  \centering
\tikz[label distance=0mm];
\tikzstyle{node_style} =[shape = circle,fill = black,minimum size = 4pt,inner sep=0pt]
\node[node_style] (v0) at (-1,0.5){};
\node[node_style] (v1) at (0,1){};
\node[node_style] (v2) at (1,1){};
\node[node_style] (v3) at (2,0.5){};
\node[node_style] (u1) at (0,0){};
\node[node_style] (u2) at (1,0){};
\draw (u1)--(v0)--(v1)--(v2)--(v3)--(u2);\draw(v1)--(u1)--(u2)--(v2);
\node (lv0) at (-1,0.6){$v_0$};
\node (lv1) at (0,1.1){$v_1$};
\node (lv2) at (1,1.1){$v_2$};
\node (lv3) at (2,0.6){$v_3$};
\node (lu1) at (0,-0.15){$u_1$};
\node (lu2) at (1,-0.15){$u_2$};
\node	at (0.5,-0.07){$x$};
\node at (0.17,0.5){$\chi(i)$};
%\node at (0.83,0.5){$\chi(j)$};
\node at (-0.5,0.4){$\psi(i)$};
%\node at (1.5,0.4){$\psi(j)$};
\end{tikzpicture}
\end{minipage}\end{figure}

By items (vi), (vii) and (vii) of \ref{new4}, we may extend the function $\varphi$ previously defined:
\begin{rot}\label{new5}
There is an injective function $\Phi:K\u L\rightarrow I$ such that:
\begin{itemize}
 \item If $k\in K$, then $\Phi(k)$ is the unique index $i\in I$ such that $\chi(i)\in F_k$.
 \item If $l\in L$, $\Phi(l):=\varphi(l)$ is the index $i\in I$ such that $\{\chi(i),\psi(i)\}$ meets $F_l$.
\end{itemize}
\end{rot}
By \ref{new5}, $|I|\ge|K|+|L|$, then, in every possible case, we may define $\X$ as follows:
\begin{equation}\label{eq-defx}
\X:=\begin{cases}
\big\{\{x\}\big\}\u\left(\bigcup\limits_{k\in [m]-L}\X_k\right) & \text{ if } |I|=|K|+|L|\medskip\\%K\neq\emp \text{ or }|L|=|I|\medskip\\
\bigcup\limits_{k\in [m]-L}\X_k  & \text{ if } |I|>|K|+|L|.
\end{cases}
\end{equation}
We will prove that $\X$ is a family satisfying the lemma. Denote by $X$ the union of the members of $\X$. We shall prove now:
\begin{rot}\label{new7}
The members of $\X$ are pairwise disjoint.
\end{rot}
\begin{rotproof}Suppose for a contradiction that there are distinct members $A$ and $B$ in $\X$ with a common element $z$. By construction, each family $\X_k$ has pairwise disjoint members and does not contain $\{x\}$. 
So, there are distinct $i,j\in[m]$ such that $A\in \X_i$ and $B\in \X_j$. Note that each member of $\X_k$ is contained in $F_k\u \chi(I)\u\psi(I)$ for $k\in [m]-L$. Hence, if $z\notin \chi(I)\u\psi(I)$, then $z\in F_i\i F_j$, contradicting the disjointness of $\F$. We may assume that $i\in I$ and $z\in \{\chi(i),\psi(i)\}$. In particular $z$ is in the wye $Y:=G[\{\chi(i),\psi(i),x\}]$.

By \ref{new6}, $j\notin I$. If $j\in J$, then, by definition, $F_j$ does not meet $\chi(I)\u\psi(I)$. So, $z\notin F_j$. Therefore, $z$ is in no member of $\X_j$ by construction. Hence, $j\notin J$. The remaining possibility is that $j\in K$. but this contradicts \ref{new4} (v).
\end{rotproof}
\begin{rot}\label{new8}
$rs_G(\X)\ge rs_{G/x}(\F)+1$
\end{rot}
\begin{rotproof}
	By \ref{new5}, $|I|-|K|-|L|\ge 0$ and, by \eqref{eq-defx}, $|X\i\{x\}|+|I|-|K|-|L|\ge 1$. Moreover, observe that $rs_G(\X_i)=r_G(\X_i)=r_{G/x}(F_i)+1$ for each $i\in I$, $rs_G(\X_j)=r_G(\X_j)=r_{G/x}(F_j)$ for each $j\in J$, $rs_G(\X_k)=r_G(\X_k)=r_{G/x}(F_k)-1$ for each $k\in K$ and $r_G(F_l)=r_{G/x}(F_l)=1$ for each $l\in L$. Therefore, the rank-sum of $\X$ is given by:
	\begin{equation*}
	\begin{array}{rcl}
	rs_G(\X)&=&
	|X\i\{x\}|+\sum\limits_{k\in [m]-L}rs_{G}(\X_k)
	\\&=&
	|X\i\{x\}|+\sum\limits_{i\in I}(r_{G/x}(F_i)+1)+\sum\limits_{j\in J}r_{G/x}(F_j)+\sum\limits_{k\in K}(r_{G/x}(F_k)-1)
	\\&=&
	|X\i\{x\}|+|I|-|K|-|L|+\sum\limits_{k\in [m]}r_{G/x}(F_k)
	\\&=&
	|X\i\{x\}|+|I|-|K|-|L|+rs_{G/x}(\F)
	\\&\ge&
	rs_{G/x}(\F)+1.
	\end{array}
	\end{equation*}
	This proves \ref{new8}.
\end{rotproof}

\begin{rot}\label{new15}
Suppose that $k\in [m]$ and $D$ is a circuit of $G$ with $E(D)\cont (F_k\u x)$. Then either
\begin{itemize}
 \item  $k\notin K$ and $E(D)$ is contained in a member of $\X$ or
 \item  $k\in K$ and $E(D)\ncont X$.
\end{itemize}

\end{rot}
\begin{rotproof}
	Since $|F_k|\ge|D|-1\ge 2$, then $k\in I_1\u J_2\u J_3\u K$. 

	If $k\in I_1\u J_2$, then $F_k$ is a fan of $G$. So, it is clear that $G[F_k\u x]$ has no circuits containing $x$, and therefore $E(D)\cont F_k$, which is contained in a member of $\X_k$. So, we may assume that $k\in J_3\u K$.

	If $k\in J_3$, then $\X_k$ satisfies item (c) of Lemma \ref{contraction-inner} and \ref{new15} holds. 

	So, assume that $k\in K$. Let $i$ be the index given by \ref{new4}. Note that $D$ has at least two spokes of $F_k$. Let $s$ be a spoke of $F_k$ in $D$ other that $\chi(i)$. Suppose for a contradiction that $s\in X$. Then $s$ is in a member of $\X_j$ for some $j\in [m]-L$. By the definition of $\X_k$, $j\neq k$. By the uniqueness of $i$ and by \ref{new4} (iv), $s\notin \chi(I)\u \psi(I)$. But, by construction, $E(G[\X_j])\cont F_j\u \chi(I)\u\psi(I)$. So, $s\in F_k\i F_j$, contradicting the disjointness of $\F$.
\end{rotproof}

\begin{rot}\label{new20}
	$\X$ has no crossing triangles.
\end{rot}
\begin{rotproof}
	Suppose for a contradiction that $T$ is a crossing triangle of $\X$. As $G/x$ is simple, $x\notin T$ and $T$ is a triangle of $G/x$. 
	
	If $E(T)\cont F$, then, as $\F$ has no crossing triangles, $T\cont F_k$ for some $k\in [m]$ and, by \ref{new15}, $E(T)$ is contained in a member of $\X$ or $E(T)\ncont X$, a contradiction. 
	
	Thus $E(T)\ncont F$ and there is an edge $z\in E(T)\i \{\chi(i),\psi(i)\}$ for some $i\in I$. Recall that $\{x,\psi(i),\chi(i)\}$ induces a wye in $G$. As $x\notin T$, hence $\{\chi(i),\psi(i)\}\cont T$ and, therefore, $E(T)=\{\chi(i),\psi(i),y^i_1\}$. If $i\in I_1\u I_2$, then $T$ is contained in $F'_i$, but $\X_i=\{F'_i\}$ in this case, a contradiction. So, $i\in I_3$. But, now, $y^i_1\in T-X$ by construction. A contradiction again. So, \ref{new20} holds.
\end{rotproof}

Items (a) and (c) of the lemma follows from \ref{new7}, \ref{new8} and \ref{new20}. It is left to prove item (b), this is, it is enough to prove that $\X$ is free provide $\F$ is free to finish the proof. Suppose the contrary. By \ref{new7}, $G$ has a circuit $C$ such that $E(C)\cont X$ but $E(C)$ is contained in no member of $\X$. Choose such $C$ minimizing $|E(C)|$. We will prove some assertions next:
\begin{rot}\label{new9}
If $e\in X$, then $e$ is not a chord of $C$.
\end{rot}
\begin{rotproof}
	Suppose the contrary. Then, there are circuits $C_1$ and $C_2$ of $G$ such that $E(C)\u e=E(C_1)\u E(C_2)$ and $E(C_1)\i E(C_2)=\{e\}$. Let $A$ and $B$ be distinct members of $\X$ meeting $E(C)$ with $e\notin A$. For some $i\in[2]$, $C_i$ meets $A$ and the member of $\X$ containing $e$. Moreover, $E(C_i)\cont X$. Since $G$ is simple, $C_i$ contradicts the minimality of $C$.
\end{rotproof}
\begin{rot}\label{new10}
For each $l\in L\u K$, $\chi(\Phi(l))\in F_l\cont F$.
\end{rot}
\begin{rotproof}
	By \ref{new4} (vi), we may assume that $l\in L$. Let $i:=\Phi(l)$. So, one of $\chi(i)$ or $\psi(i)$ is in $F_l\cont F$. If $i\in I_3$, the result follows from our choice of labels for $\chi(i)$ and $\psi(i)$. Assume that $i\in I_1\u I_2$. By \ref{new1}, $G/\psi(i)$ is not $3$-connected. If $G/x,\psi(i)$ is $3$-connected, then by Lemma \ref{w36-g}, $x$ and $\psi(i)$ are not adjacent. A contradiction. As $(G/x)/F_l$ is $3$-connected, then $F_l=\{\chi(i)\}$.
\end{rotproof}

\begin{rot}$E(C)\cont F\u x$.\label{new11}
\end{rot}
\begin{rotproof}
	Define $I_\psi:=\{k\in I: \psi(k)\in E(C)\}$. Recall that, for each $k\in I$, $Y_k:=G[\{x,\psi(k),\chi(k)\}]$ is a wye and $T_k:=G[\{y^k_1,\psi(k),\chi(k)\}]$ is a triangle. Hence, $|I|\le 2$ and $|I_\psi|\le2$. We will use the symbol ``$\Delta$'' for symmetric difference of sets. Denote $I_\psi:=\{i_1,\dots,i_n\}$ and define $Z:=E(C)\Delta E(T_{i_1})\Delta\cdots\Delta E(T_{i_n})$. Note that $G[Z]$ is a union of edge disjoint circuits of $G$. By \ref{new6}, $T_1,\dots,T_n$ are pairwise disjoint. We consider two cases:

	\emph {Case (i). $x\notin C$: } In this case, for each $k\in I$, by Lemma \ref{orthogonality} applied on $C$ and $Y_k$, we have that $\{\psi(k),\chi(k)\}$ is contained in $E(C)$ or disjoint from $E(C)$. Thus $k\in I_\psi$ if and only if $\{\psi(k),\chi(k)\}\cont E(C)$. This implies that $Z\cont F$. If $I_\psi=\emp$, then $E(C)=Z$ and \ref{new11} holds, so, assume that $i\in I_\psi$. 

	If $y^i_1\in E(C)$, then $E(C)=\{y^i_1, \psi(i),\chi(i)\}$. Moreover, $y^i_1\in X$ and $i\notin I_3$. So, $i\in I_1\u I_2$. This implies that $E(C)\cont F'_i$. But $\{F'_i\}\in \X$. A contradiction. Thus $y^i_1\notin E(C)$. 

	Now, $y^i_1$ is a chord of $C$ and, by \ref{new9}, $y^i_1\notin X$. Therefore, $i\in I_3$ and $F_i:=\{y^i_1\}$. Note that $y^i_1\in Z$. Let $D$ be a circuit of $G[Z]/x$ containing $y^i_1$. As $E(D)\cont Z\cont F$ and $F_i=\{y^i_1\}\cont E(D)\ncont F_i$, thus $D$ contradicts the freeness of $\F$.

	\emph {Case (ii). $x\in C$: } Then $x\in X$. By the definition of $\X$, $|I|=|K|+|L|$. So, the function $\Phi$, defined in \ref{new5}, is surjective. Hence, by \ref{new10}, $\chi(I)\cont F$. Note that $Z\cont \chi(I)\u F\u x$, and, therefore, $Z\cont F\u x$. If $I_\psi=\emp$, then $E(C)=Z\cont F\u x$ and we have \ref{new11}. So, assume that $i\in I_\psi$. As $\psi(i)\notin Z$, then $x$ and $\chi(i)$ are incident to a common degree-$2$ vertex of $G[Z]$ and are in a same circuit $D$ of $G[Z]$. Let $k\in L\u K$ be the index such that $\chi(i)\in F_k$. As $D/x$ is a circuit of $(G/x)[F]$ and $\F$ is a free family of $G/x$, then $E(D/x)\cont F_k$. So, $|F_k|>1$ and $k\in K$. By \ref{new15}, $E(D)\ncont X$. As $E(D)\cont Z\cont X\u\chi(I)\u\{y^j_1:j\in I_\psi\}=X\u\{y^j_1:j\in I_\psi\}$, hence, for some $j\in I_\psi$, $y^j_1\in E(D)$. Thus $y^j_1\in F_k$, contradicting the disjointness of $\F$.
\end{rotproof}
\begin{rot}\label{new12}
$x$ is a chord of $C$.
\end{rot}
\begin{rotproof}
	Suppose the contrary. Thus one of $C$ or $C/x$ is a circuit of $G/x$; call such circuit $B$. By \ref{new11}, $E(B)\cont F$ and, as $\F$ is free, $E(B)\cont F_k$ for some $k\in [m]$. So, by \ref{new15}, for $D:=C$, $C\ncont X$ or $E(C)$ is in a member of $\X$. A contradiction.
\end{rotproof}

By \ref{new12}, $x$ is a chord of $C$. By \ref{new9}, $\{x\}\notin \X$. By \eqref{eq-defx}, $|I|\ge 1$ and there is $i\in I$. By \ref{new11}, $E(C)\cont F$.  Since $\{\chi(i),\psi(i),x\}$ is a wye of $G$ and $x$ is a chord of $C$, then $\{\chi(i),\psi(i)\}\cont E(C)\cont F$. But this contradicts \ref{new2}. The lemma is proved.
\end{proof}

From Seymour Splitter Theorem~\cite{Seymour1980} (we refer the reader also to \cite[Corollary 12.1.3]{Oxley}) we may conclude:

\begin{corollary}\label{seymour-splitter}
Suppose that $G$ is a $3$-connected simple graph with at least $4$ vertices and a $3$-connected simple minor $H$. If $G$ is not isomorphic to a wheel, then $G$ has and edge $x$ such that $G/x$ or $G\del x$ is $3$-connected and simple with an $H$-minor.
\end{corollary}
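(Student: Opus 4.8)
The plan is to pass to cycle matroids and let the Splitter Theorem do the work. Write $M:=M(G)$ and $N:=M(H)$ for the cycle matroids. Since $G$ is simple and $3$-connected with $|V(G)|\ge 4$, the matroid $M$ is $3$-connected; likewise $N$ is $3$-connected (matroids with at most three elements counting as $3$-connected by convention, which covers small $H$). As $H$ is a minor of $G$, $N$ is a minor of $M$. The observation that eliminates the whirl alternative is that $M$ is graphic whereas no whirl is graphic; combined with the hypothesis that $G$ is not a wheel, this gives that $M$ is neither a wheel nor a whirl.

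Next I would invoke the Splitter Theorem~\cite{Seymour1980} (see also \cite[Corollary 12.1.3]{Oxley}): since $N$ is a $3$-connected proper minor of the $3$-connected matroid $M$, and $M$ is neither a wheel nor a whirl, there is an element $x\in E(M)=E(G)$ such that $M\del x$ or $M/x$ is $3$-connected with an $N$-minor. In the degenerate cases where $N$ has fewer than four elements --- for instance $H\cong K_1$, where $N$ is empty --- the $N$-minor imposes no real constraint and the existence of $x$ is supplied by Tutte's Wheels-and-Whirls Theorem applied to $M$; the $H$-minor of the reduction is then automatic, since any $3$-connected graph on at least three vertices already contains such a small $H$ as a minor.

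It then remains to translate the two outcomes back into graph language, the only delicate point being simplicity. If $M\del x=M(G\del x)$ is $3$-connected, then, since deleting an edge from a simple graph leaves it simple and $G\del x$ retains the $\ge 4$ vertices of $G$, the graph $G\del x$ is $3$-connected and simple. If $M/x=M(G/x)$ is $3$-connected, then $G/x$ has no loops because $G$ is simple, and it has no parallel edges because a $3$-connected matroid has none; hence $G/x$ is simple and therefore $3$-connected. In either case the reduced matroid has an $N$-minor, and since $H$ is $3$-connected it is the unique graph with cycle matroid $N$ (Whitney); equivalently, for a $3$-connected graph $H$, $H$ is a minor of a graph $G'$ exactly when $N$ is a minor of $M(G')$. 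Thus $G\del x$ or $G/x$ has an $H$-minor, as required.

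The step I expect to be the main obstacle is precisely this matroid--graph bookkeeping: verifying that $3$-connectivity of the reduced matroid together with the simplicity of $G$ forces the reduced graph to be simple (the parallel-edge issue in the contraction case), and that preservation of the matroid minor $N$ yields preservation of the genuine graph minor $H$. Once these standard correspondences are secured, all the real content of the statement is supplied by the Splitter Theorem.
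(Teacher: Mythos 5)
Your proposal is correct and follows essentially the same route as the paper, which likewise derives the corollary directly from Seymour's Splitter Theorem in its matroid form \cite[Corollary 12.1.3]{Oxley}; your matroid--graph translation (whirls being non-graphic, simplicity of $G/x$ from simplicity of the $3$-connected matroid $M/x$, Whitney's theorem for recovering the graph minor $H$, and Wheels-and-Whirls for the degenerate small-$H$ cases) just makes explicit the bookkeeping the paper leaves to the reader.
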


Now we prove Theorems \ref{main-general} and \ref{main-sum}. The same argument prove both Theorems, differing only in the use of item (b) of Lemma \ref{resolve-tudo} in the end.\smallskip

\begin{proofof}\emph{Proof of Theorems \ref{main-general} and \ref{main-sum}: }
	First, note that the theorem holds when $G$ is a wheel or $G\cong \Pi_3$ and assume the contrary. We proceed by induction on $k:=|E(G)|-|E(H')|$. When $k=0$, the result is trivial. Suppose that $k\ge 1$ and the theorem holds for smaller values of $k$. By Corollary \ref{seymour-splitter}, $G$ has an edge such that some $G'\in\{G/x,G\del x\}$ is $3$-connected and simple with an $H'$-minor. By induction hypothesis we have a fan family $\F$ of $G'$ satisfying the theorem for $G'$. By Lemma \ref{resolve-tudo}, there is a family $\X$ satisfying the theorem for $G$.
\end{proofof}

\begin{proofof}\emph{Proof of Theorem \ref{main-r/2}: }
Let $\F$ be a free $H$-fan family of $G$ with $r_G(\F)\ge r$. Consider a partition $\F=\A \u \B \u \C$, where:
\begin{itemize}
	\item $\A$ is the family of singleton sets of $\F$,
	\item $\B$ is the family of the edge sets of triangles of $G$ in $\F$ with $3$ degree-$3$ vertices and 
	\item $\C$ is the family of edge-sets of non-degenerated $H$-inner fans in $\F-\B$.
\end{itemize}
In particular, choose $\F$ maximizing $|\A|$. Let $U$ be the union of the members of $\F$. Let us check the following:
\stepcounter{rotcount}
\begin{rot}\label{r1}
If $x_0,y_1,x_1\dots,y_n,x_{n}$ is a wye-to-wye fan of $G$ containing a member $X$ of $\C$, then $\{x_0\}\in \F$ or $\{x_{n}\}\in \F$. 
\end{rot}
\begin{rotproof}
Suppose the contrary. Note that $x_0,x_n\notin U$. By Lemma \ref{w38-g}, $G/x_0$ and $G/x_n$ are $3$-connected. By Lemma \ref{w37-g}, $x_0$ and $x_{n}$ are $H$-contractible in $G$. Let $F_1$ be a spanning forest for $G[U]$. By the maximality of $|\A|$, $\F\u\big\{\{x_{n}\}\big\}$ is not free. So, $G[E(F_1)\u x_{n}]$ has a circuit $C$ containing $x_n$. Since $x_0\notin C$, then $C$ contains a spoke of $X$. Now, $F_2:=G[(E(F_1)-X)\u\{x_1,\dots,x_n\}]$ is a forest with the same number of edges as $F_1$. Note that $\F':=(\F-\{X\})\u\big\{\{x_i\}: i\in [n]\big\}$ has rank $r_G(\F)$ since it induces a subgraph of $G$ having $F_2$ as spanning forest. Moreover, each $x_i$ is $H$-contractible in $G$ by Corollary \ref{cont-biweb}. So, $\F'$ contradicts the maximality of $|\A|$.\end{rotproof}

\begin{rot}\label{r4}
If $T\in \B$ and $x_1, x_2$ and $x_3$ are the edges of $G$ adjacent to $T$, then $\left|\big\{\{x_1\},\{x_2\},\{x_3\}\big\}\i \F\right|\ge 2$.
\end{rot}
\begin{rotproof}
Note that each circuit of $G$ meeting $\{x_1,x_2,x_3\}$ also meets $E(T)$. Thus, as $\F$ is free, then so is $\F':=(\F-\{T\})\u\big\{\{x_1\},\{x_2\},\{x_3\}\big\}$. If \ref{r4} fails, then $\F'$ has rank at least $r$, contradicting the maximality of $A$.
\end{rotproof}

By \ref{r1} and \ref{r4}, $\A\neq \emp$. Let $A$ and $B$ be the union of the members of $\A$ and $\B$ respectively. We define a vertex of $G$ to be green if it is incident to an edge of $B$ and to be red otherwise. We define the non-red vertices of $G/B$ to be blue. Next, we prove:

\begin{rot}\label{r2}
$G/B$ is simple or $G$ is isomorphic to $K_4$ or $\Pi_3$.
\end{rot}
\begin{rotproof}Suppose the contrary. Let $\B:=\{B_1,\dots,B_n\}$ and let $k$ be the least index for which $G/\{B_1,\dots,B_{k}\}$ is not simple. By the second part of Corollary \ref{cont-triweb}, $G':=G/\{B_1,\dots,B_{k-1}\}\cong K_4$ and $B_k$ induces a triangle $T$ of $G'$. So, there is an unique vertex $w \in V(G')-T$. If $w$ is red, then $G=G'\cong K_4$, otherwise $w$ is blue and $G\cong \Pi_3$.
\end{rotproof}

If $G$ is isomorphic to $K_4$ or $\Pi_3$, the theorem may be verified directly. So, assume the contrary. Therefore $G/B$ is simple. 

Define $R$ as the union of the edge-sets of the rims of the members of $\C$. Moreover, define $W:=\{x\in E(G)-U: x$  is adjacent to and edge of $B\}$. Note that \ref{r4} implies:

\begin{rot}\label{r3} Each blue vertex of $G/B$ is incident to at most one edge of $W$
\end{rot}

Let $F$ be the graph obtained from $G':=(G/B)[A\u R\u W]$ by cleaving each red vertex $v$ of $G'$ into $\deg_{G'}(v)$ degree-one so said pink vertices. Note that $F$ has two types of vertices: the blue ones, with degree three, and the pink ones, with degree one.

Note that each edge of $R$ has red endvertices in $G$ and, therefore, pink endvertices in $F$. So, each edge of $R$ induces a connected component of $F$. As $\A\neq \emp$, at least one of the connected components of $F$ is not induced by an edge of $R$. Let $\kappa$ be the number of connected components of $F$. Hence:
\begin{equation}\label{consuelo}
|\C|\le |R|\le \kappa -1. 
\end{equation}

As $G/B$ is simple, $F$ is simple. As $\F$ is free, $G[A\u R]$ is a forest and so is $F[A\u R]$. Hence, $F$ has a spanning forest $T$ containing $A\u R$. Define $D:=E(F)-E(T)$. We say that a blue vertex is dark blue if it has degree $3$ in $T$; otherwise, we say that such vertex is light blue. As each edge of $D$ is in a circuit of $F$, then it has light blue endvertices. Conversely, the light blue vertices are exactly those incident to edges of $D$. As $D\cont W$, then, by \ref{r3}, each light blue vertex is incident to exactly one edge of $D$ and, therefore, have degree $2$ in $T$. Moreover, each light blue vertex has degree one in $G[D]$, and, therefore, the number $l$ of light blue vertices satisfies $l=2|D|$. Let $d$ be the number of dark blue vertices and let $T'$ be a forest obtained from $T$ by replacing by a single edge each maximal path in respect to having all inner vertices light blue. As $T'$ is a cubic forest with $\kappa$ connected components and $d$ inner vertices, then $|E(T')|=\kappa+2d$. By construction, $|E(T)|=|E(T')|+l=\kappa+2d+l$. This implies that $|E(F)|=|E(T)|+|D|=\kappa+2d+l+|D|$. As $G$, has $|\B|=l+d$ blue vertices, then $|E(F)|=\kappa+2|\B|-l+|D|$. But $l=2|D|$, so:
\begin{equation}\label{conchita}
2|\B|=|E(F)|-\kappa+|D|. 
\end{equation}

Since $D\cont W$ and $E(F)=A\dot{\u} R\dot{\u} W$, then 
\begin{equation}\label{carmemsita} 
|A|+|R|\le |E(F)|-|D|.
\end{equation}
Note that $r\le r_G(\F)= 2|\B| + (|A| + |R|) +|\C|$. So, by \eqref{conchita}, \eqref{carmemsita} and \eqref{consuelo}:
\begin{equation}\label{eq-veronica}
r\le (|E(F)|-\kappa+|D|)+(|E(F)|-|D|)+(\kappa-1)=2|E(F)|-1.
\end{equation}
This implies that $|E(F)|\ge\lc (r+1)/2 \rc$. Recall that $|E(F)|=A\u R\u W$. By Corollary \ref{cont-biweb}, the edges in $R$ are $H$-contractible in $G$. By Lemmas \ref{w38-g} and \ref{w37-g}, so are the edges in $W$. By definition, the elements of $A$ are $H$-contractible in $G$ and, therefore, so are the edges of $F$. Now, it suffices to prove that $G[E(F)]$ is a forest to establish the theorem. Indeed, recall that $E(F)=A\u R\u W$. Since $\F$ is free, then $G[A\u R]$ is a forest. So, every circuit of $G[E(F)]$ meets an edge of $W$. But each circuit meeting an edge of $W$ also meets an edge of $B$. As $E(F)\i B=\emp$, hence $G[E(F)]$ is a forest and the theorem is valid.\end{proofof}

\section{Sharpness}

We denote by $V_n(G)$ the set of vertices of $G$ with degree $n$. Consider the graphs $J_1$ and $J_2$ as in the figures below.

\begin{figure}[H]\centering
\begin{minipage}{6cm}\centering\caption{$J_1$}
\begin{tikzpicture}[scale=0.4]
\tikzstyle{node_style} =[shape = circle,fill = black,minimum size = 2pt,inner sep=1pt]
\node (dots) at (0,3) {$\dots$};\node (dots) at (0,0) {$\cdots$};
\node[node_style] (v1) at (1,3) {};\node[node_style] (v2) at (2,3) {};\node[node_style] (v3) at (3,3) {};\node[node_style] (v4) at (4,3) {};
\node[node_style] (v-1) at (-1,3) {};\node[node_style] (v-2) at (-2,3) {};\node[node_style] (v-3) at (-3,3) {};\node[node_style] (v-4) at (-4,3) {};
\draw (-5,3) -- (-0.5,3);\draw (5,3) -- (0.5,3);
\node[node_style] (u1) at (1.5,2) {};\node[node_style] (u2) at (3.5,2) {};\node[node_style] (u-1) at (-1.5,2) {};\node[node_style] (u-2) at (-3.5,2) {};
\node[node_style] (x1) at (1.5,1) {};\node[node_style] (x2) at (3.5,1) {};\node[node_style] (x3) at (5.5,1) {};
\node[node_style] (x-1) at (-1.5,1) {};\node[node_style] (x-2) at (-3.5,1) {};\node[node_style] (x-3) at (-5.5,1) {};
\node[node_style] (w1) at (1,0) {};\node[node_style] (w2) at (2,0) {};\node[node_style] (w3) at (3,0) {};
\node[node_style] (w4) at (4,0) {};\node[node_style] (w5) at (5,0) {};\node[node_style] (w6) at (6,0) {};
\node[node_style] (w-1) at (-1,0) {};\node[node_style] (w-2) at (-2,0) {};\node[node_style] (w-3) at (-3,0) {};
\node[node_style] (w-4) at (-4,0) {};\node[node_style] (w-5) at (-5,0) {};\node[node_style] (w-6) at (-6,0) {};
\draw[thin] (-1.5,2)--(-1.5,1);\draw[thin] (-3.5,2)--(-3.5,1);\draw[thin] (1.5,1) -- (1.5,2);\draw[thin] (3.5,1) -- (3.5,2);
\draw[thin] (5.5,1) -- (5.5,3)--(4,3);\draw[thin] (-5.5,1) -- (-5.5,3)--(-4,3);
\draw[thin] (-4,3)--(-3.5,2)--(-3,3); \draw[thin] (-2,3)--(-1.5,2)--(-1,3); 
\draw (4,3)--(3.5,2)--(3,3); \draw[thin] (2,3)--(1.5,2)--(1,3); 
\draw [thin] (-6,0)--(-5.5,1)--(-5,0); \draw [thin](-4,0)--(-3.5,1)--(-3,0); \draw [thin](-2,0)--(-1.5,1)--(-1,0); 
\draw [thin] (6,0)--(5.5,1)--(5,0);    \draw [thin](4,0)--(3.5,1)--(3,0); \draw [thin](2,0)--(1.5,1)--(1,0); 
\end{tikzpicture}\label{pic-j1}\end{minipage}\begin{minipage}{6cm}\centering\caption{$J_2$}
\begin{tikzpicture}[scale=0.4]
%\tikzstyle{node_style} = [circle,fill=black,minimum size=1pt]
\tikzstyle{node_style} =[shape = circle,fill = black,minimum size = 2pt,inner sep=1pt]
\node (dots) at (0,3) {$\dots$};\node (dots) at (0,0) {$\cdots$};
\node[node_style] (v1) at (1,3) {};\node[node_style] (v2) at (2,3) {};\node[node_style] (v3) at (3,3) {};
\node[node_style] (v4) at (4,3) {};\node[node_style] (v5) at (5,3) {};\node[node_style] (v6) at (6,3) {};
\node[node_style] (v-1) at (-1,3) {};\node[node_style] (v-2) at (-2,3) {};\node[node_style] (v-3) at (-3,3) {};
\node[node_style] (v-4) at (-4,3) {};\node[node_style] (v-5) at (-5,3) {};\node[node_style] (v-6) at (-6,3) {};
\draw (-6,3) -- (-.5,3);\draw (6,3) -- (0.5,3);
\node[node_style] (u1) at (1.5,2) {};\node[node_style] (u2) at (3.5,2) {};\node[node_style] (u3) at (5.5,2) {};
\node[node_style] (u-1) at (-1.5,2) {};\node[node_style] (u-2) at (-3.5,2) {};\node[node_style] (u-3) at (-5.5,2) {};
\node[node_style] (x1) at (1.5,1) {};\node[node_style] (x2) at (3.5,1) {};\node[node_style] (x3) at (5.5,1) {};
\node[node_style] (x-1) at (-1.5,1) {};\node[node_style] (x-2) at (-3.5,1) {};\node[node_style] (x-3) at (-5.5,1) {};
\node[node_style] (w1) at (1,0) {};\node[node_style] (w2) at (2,0) {};\node[node_style] (w3) at (3,0) {};
\node[node_style] (w4) at (4,0) {};\node[node_style] (w5) at (5,0) {};\node[node_style] (w6) at (6,0) {};
\node[node_style] (w-1) at (-1,0) {};\node[node_style] (w-2) at (-2,0) {};\node[node_style] (w-3) at (-3,0) {};
\node[node_style] (w-4) at (-4,0) {};\node[node_style] (w-5) at (-5,0) {};\node[node_style] (v-6) at (-6,0) {};
\draw[thin] (-1.5,2)--(-1.5,1);\draw[thin] (-3.5,2)--(-3.5,1);\draw[thin] (-5.5,1)--(-5.5,2);
\draw[thin] (1.5,1) -- (1.5,2);\draw[thin] (3.5,1) -- (3.5,2);\draw[thin] (5.5,1) -- (5.5,2);
\draw[thin] (-6,3)--(-5.5,2)--(-5,3); \draw[thin] (-4,3)--(-3.5,2)--(-3,3); \draw[thin] (-2,3)--(-1.5,2)--(-1,3); 
\draw[thin] (6,3)--(5.5,2)--(5,3); \draw[thin] (4,3)--(3.5,2)--(3,3); \draw[thin] (2,3)--(1.5,2)--(1,3); 
\draw[thin] (6,3)--(6,3.5)--(-6,3.5)--(-6,3);
\draw [thin] (-6,0)--(-5.5,1)--(-5,0); \draw [thin](-4,0)--(-3.5,1)--(-3,0); \draw [thin](-2,0)--(-1.5,1)--(-1,0); 
\draw [thin] (6,0)--(5.5,1)--(5,0);    \draw [thin](4,0)--(3.5,1)--(3,0); \draw [thin](2,0)--(1.5,1)--(1,0); 
\end{tikzpicture}\label{pic-j2}\end{minipage}
\end{figure}

For $i=1,2$, let $A_i$ be the set of edges in $J_i$ with some endvertex of degree one and let $B_i:=E(J_i)-A_i$. Let $2n:=|V_1(J_i)|$. For $m\ge 2n+1$, let $K_m$ be a copy of the complete graph with $m$ vertices disjoint from $J_i$. Consider the graph $G_i$ obtained by identifying $V_1(J_i)$ with $2n$ distinct vertices of $K_m$. Note that $|G[B_1]|=4n-6$ and $|G[B_2]|=4n$.

Define $H_i:=G_i/B_i$. Note that $H_i$ is $2n$-connected. Let $T_i\cont E(G_i)$ be a set such that $G_i[T_i]$ is a forest, $|G_i/T_i|=|H_i|$ and $G_i/T_i$ has an $H_i$-minor. As $G[T_i]$ is a forest, then $|T_i|=r_{G_i}(B_i)$. Choose $m\gg2n$ in such a way that $\si(G_i/x)$ has less edges than $H_i$ for each $x\in E(K_m)$. So $E(K_m)\i T_i=\emp$ and, therefore, $T_i\cont E(J_i)$. 

Let us prove that $T_i\cont B_i$. Suppose the contrary. Since $T_i\i E(K_m)=\emp$, then there is $x_1\in A_i\i T_i$. Since $m\ge2n+1$, there is a vertex $v$ in $G_i$  incident to no edges of $J_i$. Let $T_i:=\{x_1,\dots,x_r\}$. For $0\le s\le r$, define $I_s:=G_i/\{x_1,\dots, x_s\}$ and $W_s:=V(I_s)-V(I_s[E(K_m)])$. Consider the graph $J'_i$ obtained by the identification of all degree-$1$ vertices of $J_i$ into a single vertex $w_i$. Note that $|J'_i|=|G[B_i]|+1$ and $0\le s\le r=|G[B_i]|-1$. Thus $J_{i,s}:=J'_i/\{x_1,\dots,x_s\}$ has at least two vertices. Keep the label of $w_i\in J_{i,s}$. Now note that $\emp\neq V(J_{i,s})-w_i\cont W_s$. Now, observe that $I_1$ has a set with $2n-1$ vertices separating $v$ from $W_1$. By an inductive argument, we conclude that each $G_k$ has an set with less than $2n$ edges separating $v$ from $W_k\neq\emp$. So, $I_r$ is not $2n$-connected. Since $|H|=|I_r|$ and $I_r$ has an $H$-minor, then $H$ is not $2n$-connected, a contradiction. Therefore, $T_i\cont B_i$.

Since $r_G(T_i)=r_G(B_i)$, then $T_i$ induces a spanning tree of $J_i[B_i]$ and $\si(G/T_i)=G/B_i$. So, all $H_i$-contractible edges of $G_i$ are in $B_i$. Hence, for $i=1$, the largest subset of $H_1$-contractible edges of $G_1$ has $2n-3$ edges, while $|G_1|-|H_1|+1=4n-6=2(2n-3)$. Similarly, the largest subset of $H_2$-contractible edges of $G_2$ has $2n$ edges, while $|G_2|-|H_2|+1=4n=2(2n)$. This gives a sharp examples for Theorem \ref{main2} for sufficiently large odd values of $|G|-|H|$.

When $|G|-|H|$ is even, we consider for $i=1,2$, an edge $x$ in the graph $G_i$ previously defined such that $x\in B_i$ but $x$ is adjacent to an edge of $A_i$. Note that $x$ is adjacent to an unique triangle $T$, which has $3$ degree-$3$ vertices. The edge of $T$ not adjacent to $x$ is $H_i$-contractible in $G_i/x$ by Lemma \ref{cont-biweb}. Moreover, the property of the other edges of $B_i$ in being $H$-contractible in $G_i$ or $G_i/x$ is the same. As $\lc\frac{|G_i|-|H_i|+1}2\rc=\lc\frac{|G_i/x|-|H_i|+1}2\rc$, we have a sharp example for theorem \ref{main2} for $|G|-|H|$ even and sufficiently large.

For a sharp example for Theorems \ref{main3} and \ref{main}, consider two disjoint copies $G_1$ and $G_2$ of a $(k+1)$-connected triangle-free graph such that each $G_i$ has a stable $k$-set of vertices $X_i:=\{x^i_1,\dots,x^i_k\}$ and the vertices of $V(G_i)-X_i$ are not covered by less than $k+1$ edges (we may choose, for instance, $G_1$ and $G_2$ as hypercubes of a suitable size). Now consider the graph $G:=(G_1\u G_2)+\{x^1_jx^2_j: j\in[k]\}$. Define $Z:=\{x^1_jx^2_j: j\in[k]\}$ and $H:=G/Z$. Note that $H$ has an unique vertex cut $X$ with at most $k$ elements. Note that $X$ separates the edge sets of $G_1$ and $G_2$.

Now suppose that $Z'$ is a set of edges such that $H':=G/Z' \cong H$ but there is an edge $z\in Z'-Z$. Say that $z\in E(G_1)$. Now, in an inductive way, similarly as we did in the last class of examples, we may prove that there is a vertex cut $X'$ of $H'$ separating $F:=E(G_1)\i E(H')$ from $E(H')-F$. As $H\cong H'$, $X'$ is the unique vertex cut of $H'$ with up to $k$ elements. Moreover, both classes of edges separated by $X'$ induces copies of $G_1$ in $H'$. But one of then is induced by $E(G_1)\i E(H')\cont E(G_1)-z$. A contradiction. Thus $Z$ is the unique $k$-subset of $G$ such that $G/Z\cong H$. As a consequence, each $H$-contractible edge of $G$ is in $Z$.


\begin{thebibliography}{20}
\bibitem{Ando} K.Ando, H. Enomoto and A. Saito, \emph{Contractible edges in $3$-connected graphs}, J. Combin. Theory Ser. B 42 (1987) 87-93.
%\bibitem{Bixby-Lemma} R. E. Bixby, \emph{A simple theorem on 3-connectivity}, Linear Algebra Appl. 45 (1982) 123-126
%\bibitem{Bixby86} R. E. Bixby and C. R. Coullard, \emph{On chains of $3$-connected matroids,} Discrete Appl. Math. 15 (1986), 155-166.
%\bibitem{Bixby87} R. E. Bixby and C. R. Coullard, \emph{Finding a smallest $3$-connected minor maintaining a fixed minor and a fixed element}, Combinatorica 7 (1987) 231-242.
%\bibitem{Costalonga} J. P. Costalonga, \emph{Cocircuitos n\~ao-separadores que evitam um elemento e graficidade em matroides bin\'arias}, Universidade Federal de Pernambuco, Ph.D. Thesis, 2011.
\bibitem{Costalonga2} J. P. Costalonga, \emph{On $3$-connected minors of $3$-connected matroids an graphs}, European J. Combin. 33 (2012)  72-81.
%\bibitem{Coullard85} C. R. Coullard, \emph{Minors of $3$-connected matroids and adjoints of binary matroids,} Northwestern University, Ph. D. Thesis, 1985.
%\bibitem{Coullard92} C. R. Coullard and J. G. Oxley, \emph{Extension of Tutte's wheels-and-whrils theorem,} J. Combin. Theory Ser. B 56 (1992), 130-140.
%\bibitem{Cunningham} W. H. Cunningham, \emph{On matroid connectivity}, J. Combin. Theory Ser. B 30 (1981) 94-99.
\bibitem {Egawa} Y. Egawa, Y. Enomoto and A. Saito, \emph{Contractible edges in triangle-free graphs}, Combinatorica 6 (1986) 269-274.
\bibitem{Gu} W. Gu, X. Jia and H. Wu, \emph{Chords in Graphs},  Aust. J. Comb 32 (2005) 117-124.
%\bibitem{Kingan} S. R. Kingan and M. Lemos , \emph{Strong Splitter Theorem}, Annals of Combinatorics 17 (in press).
\bibitem{Kriesell} M. Kriesell, \emph{A Survey on Contractible Edges in Graphs of a Prescribed Vertex Connectivity}, Graphs and Combinatorics 18 (2002) 1-30.
%\bibitem{Lemos2004} M. Lemos, \emph{A characterization of graphic matroids using non-separating cocircuits}, Advances in Applied Mathematics, 42, 2004, 75-81.
%\bibitem{Lai} H.-J. Lai, M. Lemos, T. J. Reid, Y. Shao, H. Wu, \emph{Obstructions to a binary matroid being graphic}, European J. Combin.  32 (2011) 853-860.
%\bibitem{LemosMelo2008} M. Lemos and T. R. B. Melo, \emph{Non-Separating cocircuits in matroids}, Discrete Appl. Math. 156 (2008) 1019-1024.
%\bibitem{Lemos2009} M. Lemos, \emph{A characterization of graphic matroids using non-separating cocircuits}, Advances in Applied Mathematics 42 (2004) 75-An a81.
\bibitem{McCuaig} W. McCuaig, \emph{Edge contractions in 3-connected graphs}, Ars Comb. 29 (1990) 299-308.
\bibitem{Oxley}  J. G. Oxley, \emph{Matroid Theory}, Second Edition, Oxford University Press, New York, 2011.
%\bibitem{Delta} J. Oxley, C. Semple, D. Vertigan, \emph{Generalized $\Delta$-$Y$ exchange and $k$-regular matroids}, J. Combin. Theory Ser. B 79 (2000) 1-65.
%\bibitem{Oxley-Wu} J. G. Oxley, H. Wu, \emph{On the structure of 3-connected matroids and graphs}, European J. Combin. 21 (2000) 667-688.
%\bibitem{Oxley-Wu2}  J. G. Oxley, H. Wu, \emph{Matroids and graphs with few non-essential elements}, Graphs and Combin. 16 (2000) 199-229.
%\bibitem{Robertson} N. Robertson and P. D. Seymour, \emph{Graph minors. XX. Wagner’s conjecture}, J. Combin. Theory Ser. B, 92 (2004), 325-357.
\bibitem{Seymour1980} P. D. Seymour, \emph{Decomposition of regular matroids}, J. Combin. Theory Ser. B 28 (1980) 305-359.
%\bibitem{Thomassen} C. Thomassen, \emph{Planarity and duality of finite and infinite graphs}, J. Combin. Theory Ser. B 29 (1980) 244-271.
%\bibitem{Tutte-ConMat} W. T. Tutte, \emph{Connectivity in matroids}, Canad. J. Math. 18 (1966) 1301-1324
\bibitem{Whittle} G. Whittle, \emph{Stabilizers of classes of representable matroids}, J. Combin. Theory Ser. B 77 (1999) 39-72.
%\bibitem{Whittle2} G. Whittle, \emph{A characterization of the matroids representable over $GF(3)$ and the rationals}, J. Combin. Theory Ser. B 65, 1995, 222-261.
\bibitem{Wu} H. Wu, \emph{On contractible and vertically contractible elements in 3-connected matroids and graphs}, Discrete Math. 179 (1998) 185-203.
%\bibitem{Tutte} W.T.Tutte, \emph{A theory of 3-connected graphs}, Nederl. Akad. Wet., Proc., Ser. A 64 (1961) 441-455
\end{thebibliography}
\end{document}